\documentclass{article}
\usepackage[utf8]{inputenc}
\usepackage[a4paper, total={6in, 8in}]{geometry}
\usepackage{authblk}
\usepackage{amsthm}
\usepackage{amsmath}
\usepackage{amssymb}
\usepackage{graphicx}
\usepackage{bbm}
\usepackage{braket,amsfonts,amssymb}
\usepackage[linesnumbered,ruled]{algorithm2e}
\usepackage{comment}
\usepackage{amsopn}
\usepackage{bm}
\usepackage{mathrsfs}  
\usepackage[mathscr]{euscript}
\usepackage{bm,nicefrac}
\usepackage{extarrows}
\usepackage{caption}
\usepackage{subcaption}
\usepackage{xcolor}
\usepackage[linktoc=all]{hyperref}
\usepackage{titlesec}
\hypersetup{
    colorlinks,
    citecolor=blue,
    filecolor=black,
    linkcolor=blue,
    urlcolor=black
}

\usepackage{enumerate}
\usepackage{enumitem}

\usepackage[normalem]{ulem}

\def\bo{{\bf 1}}

\def\d{{\bf{d}}}

\def\g{g}
\def\h{h}

\def\n{n}
\def\p{p}

\def\s{s}
\def\sigmainn{\rho_{\text{rp}}}
\def\sigmaout{\rho_{\text{link}}}
\def\t{t}
\def\u{{\bf u}}
\def\v{{\bf v}}
\def\w{{\bf w}}
\def\x{{\bf x}}

\def\y{{\bf y}}
\def\z{{\bf z}}
\def\zprod{\widetilde{\z}}

\def\we{w}
\def\winfty{\w_{\infty}}
\def\wT{\w_{T}}
\def\wprodT{\wprod_{T}}
\def\wprod{\widetilde{\w}}
\def\wprode{\widetilde{\we}}
\def\wprodinfty{\wprod_\infty}

\def\wDomainOneD{{\mathcal{D}}}
\def\wDomainRP{{\mathcal{D}_{\text{rp}}^N}}
\def\wDomainRPlim{{\mathcal{D}_{\text{rp}}^\text{lim}}}
\def\wDomainRPOneD{{\mathcal{D}_{\text{rp}}}}

\def\ye{y}
\def\ze{z}

\def\A{{\bf A}}
\def\Ae{A}
\def\B{{\bf B}}

\def\IRERM{\mathcal{L}}
\def\F{F}

\def\H{H}

\def\J{J}

\def\L{L}

\def\LossM{\widetilde{\mathcal{L}}}
\def\M{M}
\def\N{N}

\def\R{\mathbb{R}}

\def\T{T}

\def\0{\boldsymbol{0}}
\def\1{\boldsymbol{1}}

\DeclareMathOperator*{\argmin}{arg\,min}

\renewcommand{\L}{L}

\renewcommand{\epsilon}{\varepsilon}

\newcommand{\wDomainCirc}{\mathcal{D}^N_\circ}
\newcommand{\wDomainCircOneD}{\mathcal{D}_\circ}

\newcommand{\artanh}{\textnormal{artanh}}
\newcommand{\sign}{\mathrm{sign}}

\newcommand{\revFinal}[1]{#1}

\providecommand{\keywords}[1]{\textbf{\textit{Keywords --- }} #1}

\newtheorem{theorem}{Theorem}[section]

\newtheorem{definition}[theorem]{Definition}
\newtheorem{lemma}[theorem]{Lemma}
\newtheorem{corollary}[theorem]{Corollary}
\newtheorem{assumption}[theorem]{Assumption}
\theoremstyle{remark}
\newtheorem{remark}[theorem]{Remark}

\title{How to induce regularization in linear models: \\ A guide to reparametrizing gradient flow}
\author[1,3]{Hung-Hsu Chou}
\author[2,3]{Johannes Maly\thanks{Corresponding author, email: \url{maly@math.lmu.de}}}
\author[4]{Dominik St\"oger}
\affil[1]{\normalsize School of Computation, Information and Technology,
TU Munich, Germany}
\affil[2]{\normalsize Department of Mathematics, LMU Munich, Germany}
\affil[3]{Munich Center for Machine Learning (MCML)}
\affil[4]{Mathematical Institute for Machine Learning and Data Science (MIDS), KU Eichst\"att-Ingolstadt}
\date{}

\begin{document}

\maketitle

\begin{abstract}
    In this work, we analyze the relation between reparametrizations of gradient flow and the induced implicit bias in linear models, which encompass various basic regression tasks.
    \revFinal{In particular, we study how reparametrization, loss function, and link function influence the convergence behavior and implicit bias of gradient flow.}
    % In particular, we aim at understanding the influence of the model parameters --- reparametrization, loss, and link function --- on the convergence behavior of gradient flow.
    Our results provide conditions under which the implicit bias can be well-described and convergence of the flow is guaranteed. 
    We furthermore show how to use these insights for designing reparametrization functions that lead to specific implicit biases which are closely connected to $\ell_p$- or trigonometric regularizers.
\end{abstract}

\keywords{Gradient flow, implicit bias, linear models, overparametrization, Bregman divergence}

\section{Introduction}
\label{sec:Introduction}

Modern machine models are often highly overparameterized, i.e., the number of parameters is much larger than the number of samples and typically trained via first-order methods such as (stochastic) gradient descent.
Due to overparameterization, multiple local minimia of the loss function exist and many of them might generalize poorly to new data.
However, (stochastic) gradient descent tends to converge towards parameter configurations that generalize well, even when no regularization is applied at all \cite{zhang17}.
This tendency of (stochastic) gradient descent to prefer certain minimizers over others is commonly called implicit bias/regularization and appears to be a key for developing a theoretical understanding of modern machine learning models such as deep neural networks. 
However, a rigorous mathematical analysis of the implicit bias in general neural network models seems not within the reach of current techniques.
Thus, in the past few years, many works have focused on simplified settings like overparameterized linear models and matrix factorization/sensing, cf.\ Section \ref{sec:related_work}. 
Interestingly, many of the puzzling empirical phenomena observed while training neural networks are observed in these simpler models as well.
However, these models have the advantage that they are more amenable to theoretical analysis.
What is more, all of these works demonstrate an implicit bias of vanilla gradient flow/descent towards solutions of simple and well-understood structures \revFinal{such as} sparsity or low rank.

Let us make this precise in the exemplary case of sparse recovery. The problem is about recovering an unknown vector $\w_\star \in \R^\N$ from few linear observations
\begin{align}
\label{eq:CS}
    \y = \mathbf A\w_\star \in \R^\M,
\end{align}
where $\mathbf A \in \R^{\M \times \N}$. A basic approach to solve \eqref{eq:CS} via gradient descent is to compute solutions to
\begin{align}
\label{eq:LS}
    \min_{\z \in \R^\N} \| \A\z - \y \|_2^2.
\end{align}
For sufficiently small step-sizes, the gradient descent trajectory $\w(\t)$ initialized at $\w_0 = \boldsymbol{0}$ converges to the least-square fit $\w_\infty := \lim_{\t\to\infty} \w(\t) = \A^\dagger\y$, which minimizes the $\ell_2$-norm among all solutions of \eqref{eq:LS}. Here the matrix $\A^\dagger \in \R^{\M\times \N}$ denotes the Moore-Penrose pseudoinverse of $\A$.
On the one hand, we know that in general $\w_\infty \neq \w_\star$ if $\M \ll \N$. On the other hand, if $\w_\star$ is $s$-sparse\footnote{A vector $\z \in \R^\N$ is called $s$-sparse if at most $s$ of its entries are non-zero.} and $\mathbf A$ behaves sufficiently well with $\M \gtrsim s \log(\N/s)$, signal processing theory \cite{candes2006robust,donoho2006compressed,foucart2013compressed} shows that $\w_\star$ can be uniquely identified from \eqref{eq:CS} by solving the constrained $\ell_1$-minimization problem
\begin{align}
\label{eq:LASSO}
    \min_{\z \in \R^\N} \| \z \|_1 \qquad \text{subject to} \quad \A\z = \y,
\end{align}
where the $\ell_1$-norm serves as an explicit regularizer for sparsity.
\revFinal{Recent works \cite{vaskevicius2019implicit,woodworth2020kernel,chou2021more,li2021implicit} proved that instead of solving \eqref{eq:LASSO}, which is convex but nonsmooth due to the $\ell_1$-norm, one can solve the overparametrized objective}
% Recent works \cite{vaskevicius2019implicit,woodworth2020kernel,chou2021more,li2021implicit} proved that instead of solving \eqref{eq:LASSO}, which is convex but non-differentiable due to the $\ell_1$-norm, one can recover $\w_\star$ by solving the overparametrized but unregularized least-squares problem
\begin{align} \label{eq:Lover}
     \min_{\z^{(1)}, \dots, \z^{(L)} \in \R^\N}
    \Big\| \mathbf A \big( \z^{(1)} \odot \cdots \odot \z^{(L)} \big) -\y   \Big\|_2^2,
\end{align}
\revFinal{which is smooth in the parameters but nonconvex,}
via vanilla gradient flow/descent. Here, $\odot$~denotes the (entry-wise) Hadamard product. Indeed, several works \cite{chou2021more,li2021implicit,woodworth2020kernel} guarantee that if all Hadamard factors are initialized \revFinal{identically and} close to the origin, then gradient flow/descent $(\w^{(1)}(\t), \dots, \w^{(L)}(\t))$ converges to $(\w^{(1)}_\infty, \dots, \w^{(L)}_\infty)$ with $\wprodinfty := \w^{(1)}_\infty \odot \cdots \odot \w^{(L)}_\infty$ being an approximate $\ell_1$-minimizer among all possible solutions of \eqref{eq:CS} 
\revFinal{in the same orthant as the initialization. The orthant restriction is due to saddle points at zeros, and \revFinal{can easily be overcome by extending the parameter space, i.e., using the objective} $\| \mathbf A ( \w_+^{(1)} \odot \cdots \odot \w_+^{(L)}- \w_-^{(1)} \odot \cdots \odot \w_-^{(L)}) -\y \|$ as in \cite{chou2021more}. In this case, the difference of the products $\wprod_\infty^{\pm}:=\w^{(1)}_{\infty,+} \odot \cdots \odot \w^{(L)}_{\infty,+}-\w^{(1)}_{\infty,-} \odot \cdots \odot \w^{(L)}_{\infty,-}$ approximates $\w_\star$.} Such phenomena led to the conclusion that the (artificially) introduced overparametrization allows gradient descent to implicitly identify the underlying structure of the problem, i.e., sparsity of $\w_\star$. Interestingly, the formulation in \eqref{eq:Lover} is equivalent to the training of deep linear networks with diagonal weight matrices \cite{woodworth2020kernel} by viewing $\z^{(\ell)}$ as the diagonal of the $\ell$-th layer weight matrix.

It is important to note that several works \cite{woodworth2020kernel,Gissin2019Implicit,chou2021more} make use of a simple observation in their analysis: if all $\z^{(k)}$ are initialized with the same vector $\w_0$ and gradient flow is applied to \eqref{eq:Lover}, then the iterates $\w^{(k)}$ stay identical over time and \eqref{eq:Lover} is equivalent to solving
\begin{align}
\label{eq:Lover_simplified}
    \min_{\z \in \R^\N}
    \| \mathbf A \z^{\odot L} -\y \|_2^2
\end{align}
via gradient flow with initialization $\w_0$, where $\z^{\odot L} = \z \odot \cdots \odot \z$ denotes the $L$-th Hadamard power of $\z$, see e.g.\ \cite[Lemma 12]{chou2021more}. In other words, the implicit bias of overparametrized gradient flow in \eqref{eq:Lover} is linked to the implicit bias of \emph{reparametrized gradient flow} in \eqref{eq:Lover_simplified}. The only difference between \eqref{eq:LS} and \eqref{eq:Lover_simplified} lies in a reparametrization of the input space via the univariate non-linear map $z \mapsto z^L$ that is applied entry-wise. A related reduction technique appears in the analysis of deep \emph{linear} networks under the name \emph{balanced initialization} \cite{hardt2016identity,arora2018optimization}. Therefore, to analyze the implicit bias of gradient descent in general neural network training, it seems necessary to fully understand the relation between over- and reparametrization of the input space in simplified settings first.

\subsection{Contribution}\label{sec:contribution}

In this work, we consider the setting of linear models as outlined above and focus on two central questions. \emph{Which implicit regularization other than $\ell_1$-norm minimization can be induced via reparametrized gradient flow/descent? \revFinal{And how} does the choice of the loss function influence gradient flow?}
% We approach these questions on a class of linear models which is related to general linear models as used in statistics \cite{dobson2018introduction,hardin2007generalized,mccullagh2019generalized,hastie2017generalized}. Generalized linear models are linear regression models in which the underlying linear model may be related to the response variables by a (non-linear) link function.

\revFinal{We study a class of reparametrized linear prediction problems inspired by generalized linear models \revFinal{(GLMs)} \cite{dobson2018introduction,hardin2007generalized,mccullagh2019generalized,hastie2017generalized}. Our setting overlaps with standard GLM-type regression models when $L$ and $\sigmaout$ are chosen accordingly, but we allow more general losses.}
We only consider link functions that act entry-wise on vectors. Under this restriction the training of such models corresponds to the training of a single neuron.

\begin{definition}
\label{definition:IRERM}
    Let $\N,\M\in\mathbb{N}$, $\A\in\R^{\M\times\N}$ and $\y\in\R^\M$. Let $\sigmaout \colon \R \to \R$ be a \emph{link function} acting entry-wise on vectors and let $L \colon \R^M \times \R^\M \to \R$ be a general loss function. %
    We consider the minimization problem
    \begin{align}\label{eq:GeneralLinearModel}
        \min_{\widetilde \z \in \R^\N} L(\sigmaout(\A\widetilde \z),\y).
    \end{align}
    For any \emph{reparametrization function} $\sigmainn\colon\R\to\R$ acting entry-wise on vectors, a reparametrized gradient flow $\w \colon [0,T) \to \R^\N$ on \eqref{eq:GeneralLinearModel} is any solution to the dynamics
    \begin{align}\label{eq:gd_IRERM}
       \w'(\t) = -\nabla\mathcal{L}(\w(\t))
        ,\quad \w(0)=\w_0,
    \end{align}
    where $\w_0 \in \R^\N$ and $\IRERM \colon \R^\N \to \R$ is defined as\footnote{\revFinal{Whenever we write $\nabla \IRERM$, we assume that the composition defining $\IRERM$ is differentiable; the precise regularity assumptions on $L$, $\sigmaout$, and $\sigmainn$ are stated in Section \ref{sec:MainResults}.}}
    \begin{align}\label{eq:IRERM}
        \IRERM(\z):= L\Big( \sigmaout \big( \A\sigmainn(\z) \big), \y \Big).
    \end{align}
    We say that the reparametrized flow $\w: (0,T) \rightarrow \mathbb{R}^\N$ is \emph{regular} if we have that\\ $$|\{ t \ge 0 \colon \sigmainn'(\w(t)_i) = 0 \text{ for some } i \in [\N]\}| < \infty,$$
    \revFinal{where by $\vert \Omega \vert$ we denote the cardinality of a
    set $\Omega$. We furthermore call a gradient flow trajectory $\w : [0,T) \to \R^\N$ \emph{maximal} if, for any $\varepsilon > 0$ and any solution $\bar\w : [0,T+\varepsilon) \to \R^\N$ of \eqref{eq:gd_IRERM}, we have $\bar\w|_{[0,T)} \neq \w$. }
\end{definition}

\begin{remark}
\label{rem:MainDefinition}
    Let us briefly comment on Definition \ref{definition:IRERM}:
    \begin{enumerate}
        \item[(i)] Note that \eqref{eq:IRERM} reduces to \eqref{eq:Lover_simplified} if we set $\sigmaout(z) = z$, $\sigmainn(z) = z^L$, and $L(\z,\z') = \| \z - \z' \|_2^2$. 
        \item[(ii)] 
        \revFinal{
        \textbf{Existence of gradient flow solutions:}
        In the following, we will make only mild assumptions on the regularity of the functions $\sigmainn$, $\sigmaout$, and $L$, see Section \ref{sec:MainResults}.
        More precisely, we will only require these functions to be continuously differentiable.
        Since it follows from equations \eqref{eq:gd_IRERM} and \eqref{eq:IRERM} that
        \begin{equation}\label{equ:intern33}
            \w'(\t)
        = - \Big[ \A^T \nabla_\v L\big( \sigmaout(\v), \y \big) \big|_{\v = \A\sigmainn(\w(t))} \Big] \odot \sigmainn'(\w(t)),
        \end{equation}
        the existence of at least one solution to the ODE is guaranteed by Peano's theorem as the right-hand side of \eqref{equ:intern33} is continuous in $\w$.
        These mild assumptions allow us to analyze a wider class of reparametrizations,
        which is the main purpose of the paper at hand.}
        
        \item[(iii)] 
        
        \textbf{\revFinal{Uniqueness and regular flows}:}
        \revFinal{
        Our regularity assumptions will not necessarily suffice to guarantee uniqueness of the solution on \eqref{eq:gd_IRERM}.
        Indeed, since the right-hand side of equation \eqref{equ:intern33} is not Lipschitz continuous in general and the classical Picard-Lindel\"of theorem is not applicable.
        More importantly, in our setting, non-uniqueness of the trajectory arises precisely 
        when entries of the gradient flow 
        hit points $w$ with $\sigmainn'(w)=0$ since at such points the vector field in equation \eqref{equ:intern33} vanishes in the corresponding coordinates and there is possibility that the flow can bifurcate. Trajectories that reach these points may “pause” for an indefinite time interval, 
        leading to infinitely many different solutions.\\
       Different solutions to the gradient flow ODE lead to different implicit biases. 
       In our paper, we focus on \textit{regular solutions},
       i.e., solutions where the
       entries of the gradient flow solution do not stop
       for a period of time.
       A key contribution in our paper is to characterize 
       the implicit bias of \textit{regular flows}.\\
       The motivation for considering regular flows is that  we observe in our experiments that gradient descent behaves 
       similar to regular solutions for small step size,
       which allows us to predict the implicit bias 
       of gradient descent with sufficiently small step size.
       It would be an interesting future research direction
        to study whether one can classify the implicit bias for gradient flow solutions where single neurons/weights "pause", i.e., stop moving for some period time. 
        %As we will see later on, the assumption that $\wprod$ is regular will be critical when characterizing the implicit bias of gradient flow.
        %In particular, n
        %Note that if $ \sigmainn' (x) \ne 0$ for all $x \in \R$, any flow $\w$ defined by \eqref{eq:gd_IRERM} is regular by definition.
        }

        %\item[(iv)]

        %\revFinal{
        %\textbf{Interpretation of different gradient flow solutions:}
        %}
        \item[(iv)]

        \revFinal{
        \textbf{Discretization of gradient flow:}
        In cases where there is a unique solution of a gradient flow equation,
        the gradient descent solution converges towards the flow solution when the step size converges towards zero.
        However, in this paper we are interested in scenarios
        where the underlying gradient flow has not
        necessarily a unique solution.
        As our experiments below indicate,
        gradient descent with vanishing step sizes converges to a regular solution. 
        Indeed, we observe no "pausing"
        and we observe that gradient descent converges 
        towards the limit points predicted by our theory.
        Intuitively, this makes sense
        since the matrices in our experiment are chosen at random
        and the probability of hitting a point $w$ with
        $\sigmainn' (w)=0$ is expected to be zero.
        %When considering gradient flows that are motivated by discrete training dynamics, a stopping of the trajectory is highly uncommon under our Assumption \ref{as:A2} below since gradient descent hits these isolated degenerate points with probability zero (over the randomness of initialization) and has a unique trajectory in general. This unique trajectory follows the underlying gradient flow that does \emph{never} pause at degenerate points and corresponds to a regular solution according to our definition. \\
        }
    \end{enumerate}
\end{remark}

\noindent \textbf{Our contribution:} 
Our results show that the implicit bias of gradient flow in \eqref{eq:gd_IRERM} is governed by the choice of the reparametrization $\sigmainn$ and the initialization $\w_0$, whereas the convergence behavior depends in addition on the regularity of the link function $\sigmaout$ and the loss $L$.
Our contribution can be summarized as follows:
\begin{itemize}
    \item Theorem \ref{theorem:vector_IRERM}: For a wide class of reparametrizations $\sigmainn$, we explicitly characterize the implicit bias of $\w$ in terms of a Bregman divergence $D_F$ with a potential function $F$ which depends on $\sigmainn$.
    In contrast to previous work \cite{gunasekar2018char,amid2020reparameterizing,wu2021implicit}, which already described a connection between the implicit bias of gradient flow and particular Bregman divergences, our assumptions on $\sigmainn$ allow us to easily calculate the Bregman divergence $D_F$ from the reparametrization $\sigmainn$. 
    An analogous relation appeared recently in \cite{Li2022implicit}.
    Due to the more general setting considered therein, the results however require more restrictive assumptions.
    \revFinal{\revFinal{In particular, Li et al.\ \cite{Li2022implicit} assume regular parametrizations, meaning that the Jacobian is everywhere of full rank, whereas our framework permits degenerate} reparametrizations whose derivative may vanish at isolated points.}
    % Whereas we assume $\sigmainn$ to be once differentiable, the authors of \cite{Li2022implicit} assume twice continuous differentiability.
    \item Theorem \ref{thm:Convergence}: By setting $\sigmaout = \mathrm{Id}$, we provide conditions under which a solution $\w \colon [0,T) \to \R^\N$ can be extended and convergence of $\w$ to a global minimizer of the loss can be guaranteed.  We furthermore describe the convergence rates in dependence on the properties of $L$. 
    Many existing works on implicit regularization explicitly assume $T = \infty$ and convergence of $\w$, cf.\ Section \ref{sec:related_work}. 
    Note that parts of the proof techniques we use for the convergence rate analysis are conceptually close to the ones used in \cite{tzen2023variational} which appeared recently and analyzes mirror flow.
    However, the authors of \cite{tzen2023variational} consider a simpler setting in which the objective function is strictly convex  and, in particular, the global minimizer is unique. 
    \revFinal{Moreover, the sufficient condition for boundedness of the gradient flow trajectory (Theorem \ref{thm:Convergence} \ref{thm:Convergence_v}) has not appeared in previous work to the best of our knowledge.}
\end{itemize}

With this work we aim to unify various existing partial results on implicit bias of gradient flow on regression models (resp.\ diagonal linear networks) into a coherent theme and make them easily accessible. Especially the latter goal motivates the title of our paper. Apart from deepening the understanding of the influence of over-/reparametrization on gradient flow, we provide a practicable
approach to design reparametrizations in regression models that implicitly encode specific regularizers into (vanilla) gradient flow/descent. Before diving into the technical details, we would like to showcase two special instances of such implicit biases, which can easily %designed 
\revFinal{be derived} from our results. They shall serve as a blueprint for further adaptions. To keep the presentation concise in Section \ref{sec:Introduction}, we restrict ourselves to $\sigmaout = \mathrm{Id}$, only consider vanishing initialization, and do not discuss convergence rates.  
Those details can be found in Section \ref{sec:MainResults}.
Let us summarize the assumptions that we use in both showcase theorems.

\begin{assumption}
\label{assumption:Intro}
    For the
    remainder of Section \ref{sec:contribution}, we assume the following simplified setting:
    \begin{itemize}
        \item $\sigmaout = \mathrm{Id}$.
        \item $L$ is continuously differentiable, convex in the first argument, and satisfies $\min L = 0$ with $L(\z,\z') = \min_{\z,\z' \in \R^\M} L(\z,\z')$ if and only if $\z = \z'$.
    \end{itemize}
\end{assumption}

Our first showcase result illustrates how to induce $\ell_p$-regularization for $p \in (1,2)$.

\begin{theorem}
\label{thm:PolynomialRegularizationIntroduction}
    Under Assumption \ref{assumption:Intro}, 
    \revFinal{assume} $\sigmainn(z) = \sign(z) |z|^{\frac{2}{p}}$, for $p \in (1,2)$, 
    and let $\w \colon [0,T) \to \R^\N$ be any %
    regular
    solution to \eqref{eq:gd_IRERM} with 
    $\w_0 = \sigmainn^{-1} (\alpha \bo)$, for $\alpha > 0$, $\A\in\mathbb{R}^{\M\times\N}$, and $\y\in \mathrm{im}(\A)$,
    i.e., the column span of the matrix $\A$.

    Then, $\w$ is not maximal. 
    Furthermore, if $\w$ can be extended to a maximal 
    regular
    solution $\w: [0,\infty) \rightarrow \R$,\footnote{In the following we will slightly abuse notation and not relabel solutions of \eqref{eq:gd_IRERM} when we consider extensions to larger intervals.} then
    $\w_{\infty,\alpha} := \lim_{\t \to \infty} \w(\t) \in \R^\N$ exists and is a global minimizer of $\mathcal L$.    
    Set $\wprod_{\infty,\alpha} := \sigmainn(\w_{\infty,\alpha})$ and
    \begin{equation*}
        \mu:= \min_{\substack{\widetilde\z\in\R^\N,\\ \revFinal{L(\A\widetilde\z,\y)}=0}} \| \widetilde\z \|_p,
    \end{equation*}
    and assume that 
    \begin{equation*}
        \alpha
        \le \frac{1}{2^{1/(p-1)} N^{1/p}} \mu.
    \end{equation*}
    Then it holds that
    \begin{equation}
    \label{eq:PolynomialRegularizationIntro}
        \| \wprod_{\infty,\alpha} \|_p
        \le
        \left(1+ 4 N^{1-1/p} \left( \frac{ \alpha }{ \mu }\right)^{p-1}    \right) \cdot \mu.
    \end{equation}
    In other words, for small $\alpha$, the reparametrized flow $\sigmainn(\w(\t))$ approximately minimizes the $\ell_p$-norm among all (reparametrized) global minimizers $\widetilde \z = \sigmainn(\z)$ of $\revFinal{L(\A \widetilde\z,\y)}$.

\end{theorem}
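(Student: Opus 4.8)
The plan is to deduce the theorem from the two general results announced above: Theorem~\ref{theorem:vector_IRERM}, which identifies the implicit bias of the flow with a Bregman divergence $D_F$ whose potential $F$ is read off from $\sigmainn$, and Theorem~\ref{thm:Convergence}, which (for $\sigmaout=\mathrm{Id}$) gives extendability of the flow and convergence to a global minimizer of $\mathcal L$. The problem-specific work is then (i) to compute $F$ for $\sigmainn(z)=\sign(z)|z|^{2/p}$, and (ii) to convert the Bregman-optimality of the limit into the quantitative $\ell_p$-bound \eqref{eq:PolynomialRegularizationIntro}.

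\textbf{Computing the potential.} For $p\in(1,2)$ the exponent $2/p$ lies in $(1,2)$, so $\sigmainn$ is a strictly increasing $C^1$-bijection of $\R$ with $\sigmainn'(z)=\tfrac2p|z|^{2/p-1}$ continuous and vanishing only at $z=0$ -- exactly the regularity needed to apply Theorem~\ref{theorem:vector_IRERM}. Writing $\xprod=\sigmainn(\z)$, the chain rule gives $\dot\xprod_i=-\sigmainn'(\z_i)^2(\A^T\nabla_1L)_i$, so the (separable) mirror potential $F$ supplied by Theorem~\ref{theorem:vector_IRERM} satisfies $F''(\xe)\propto\sigmainn'(\sigmainn^{-1}(\xe))^{-2}$ coordinatewise. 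Since $\sigmainn^{-1}(\xe)=\sign(\xe)|\xe|^{p/2}$, one finds $\sigmainn'(\sigmainn^{-1}(\xe))=\tfrac2p|\xe|^{1-p/2}$, hence $F''(\xe)\propto|\xe|^{p-2}$; here $p>1$ is precisely what makes this integrable near $0$, and integrating twice yields $F(\xprod)=\tfrac1p\|\xprod\|_p^p$ up to an irrelevant positive multiplicative constant -- a strictly convex function because $p\in(1,2)$.

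\textbf{Convergence and the variational characterization.} Since $\w_0=\sigmainn^{-1}(\alpha\bo)$ we have $\sigmainn(\w_0)=\alpha\bo$, and, using $\sigmaout=\mathrm{Id}$ and Assumption~\ref{assumption:Intro} ($L(\A\widetilde\z,\y)=0\iff\A\widetilde\z=\y$) together with the bijectivity of $\sigmainn$, the global minimizers of $\mathcal L$ correspond through $\sigmainn$ to the affine set $\{\xprod:\A\xprod=\y\}$, which is nonempty because $\y\in\mathrm{im}(\A)$. Theorem~\ref{thm:Convergence} then yields that $\w$ is not maximal and, once extended to a maximal regular solution on $[0,\infty)$, that $\w_{\infty,\alpha}=\lim_{t}\w(t)$ exists and is a global minimizer of $\mathcal L$; verifying the hypotheses of Theorem~\ref{thm:Convergence} (in particular its boundedness criterion) for these initializations is where I expect the main technical effort, since $\sigmainn'(0)=0$ makes regularity a genuine restriction -- a coordinate could drift toward the origin. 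By Theorem~\ref{theorem:vector_IRERM} the limit $\wprod_{\infty,\alpha}=\sigmainn(\w_{\infty,\alpha})$ is feasible, $\A\wprod_{\infty,\alpha}=\y$, and satisfies $\nabla F(\wprod_{\infty,\alpha})-\nabla F(\alpha\bo)\in\mathrm{row}(\A)$; by convexity of $F$ these are the KKT conditions identifying $\wprod_{\infty,\alpha}$ as the (unique, by strict convexity) minimizer of $D_F(\cdot,\alpha\bo)$ over $\{\A\xprod=\y\}$. In particular $D_F(\wprod_{\infty,\alpha},\alpha\bo)\le D_F(\z^\mu,\alpha\bo)$, where $\z^\mu$ attains $\mu=\min_{\A\z=\y}\|\z\|_p$.

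\textbf{The explicit estimate.} With $\nabla F(\alpha\bo)=\alpha^{p-1}\bo$ one has $D_F(\xprod,\alpha\bo)=\tfrac1p\|\xprod\|_p^p-\alpha^{p-1}\langle\bo,\xprod\rangle$ plus a term independent of $\xprod$, so the comparison with $\z^\mu$ gives $\|\wprod_{\infty,\alpha}\|_p^p\le\mu^p+p\alpha^{p-1}\langle\bo,\wprod_{\infty,\alpha}-\z^\mu\rangle$. Bounding $\langle\bo,v\rangle\le\|v\|_1\le N^{1-1/p}\|v\|_p$ (Hölder) and setting $t:=\|\wprod_{\infty,\alpha}\|_p/\mu\ge1$ and $\gamma:=N^{1-1/p}(\alpha/\mu)^{p-1}$, this reduces to the scalar inequality $t^p\le1+p\gamma(t+1)$, while the hypothesis $\alpha\le\mu/(2^{1/(p-1)}N^{1/p})$ says exactly $\gamma\le\tfrac12$. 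Finally, $g(t):=t^p-p\gamma(t+1)-1$ is increasing on $[1,\infty)$ (as $g'(t)=p(t^{p-1}-\gamma)>0$), so $t\le1+4\gamma$ will follow once $g(1+4\gamma)\ge0$; and indeed, by Bernoulli's inequality $(1+4\gamma)^p\ge1+4p\gamma$, while $1+4p\gamma\ge1+2p\gamma+4p\gamma^2$ holds precisely when $\gamma\le\tfrac12$, so $g(1+4\gamma)\ge2p\gamma(1-2\gamma)\ge0$. This gives $\|\wprod_{\infty,\alpha}\|_p\le(1+4\gamma)\mu$, i.e.\ \eqref{eq:PolynomialRegularizationIntro}.
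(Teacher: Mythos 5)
Your proposal is correct and follows essentially the same route as the paper: the theorem is obtained by combining Theorem~\ref{theorem:vector_IRERM} (Bregman characterization of the limit, with potential computed from $\sigmainn$ via $H''=h'=([\sigmainn^{-1}]')^2\propto|\cdot|^{p-2}$, so $F\propto\|\cdot\|_p^p$) with Theorem~\ref{thm:Convergence} (extendability and convergence); the paper packages exactly these steps as Corollaries~\ref{cor:PolynomialRegularization} and~\ref{cor:PolynomialRegularizationConvergenceRate}. Your derivation of the quantitative estimate \eqref{eq:PolynomialRegularizationIntro} is a cosmetic variant of the paper's: you compare $D_F(\wprod_{\infty,\alpha},\alpha\bo)\le D_F(\widetilde\z_\star,\alpha\bo)$, apply H\"older, normalize to $t^p\le 1+p\gamma(t+1)$ with $\gamma\le\frac12$, and close via Bernoulli plus monotonicity of $g(t)=t^p-p\gamma(t+1)-1$; the paper instead subtracts the first-order convexity lower bound $\|\wprod_{T,\alpha}\|_p^p-\|\widetilde\z\|_p^p\ge p\|\widetilde\z\|_p^{p-1}(\|\wprod_{T,\alpha}\|_p-\|\widetilde\z\|_p)$ and uses $\frac{1}{1-x}\le 1+2x$. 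Both boil down to the same convexity of $x\mapsto x^p$ and yield the same constant $4$.

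The one place where you leave a genuine hole is the claim that ``$\w$ is not maximal'' and that the extended flow converges: you correctly observe that Theorem~\ref{thm:Convergence} requires checking its boundedness criterion, but you do not carry this check out. It is in fact short (this is what the paper's Corollary~\ref{cor:PolynomialRegularizationConvergenceRate} does): one verifies $\widetilde z\mapsto\widetilde z\cdot h'(\widetilde z)=\frac{p^2}{4}\sign(\widetilde z)|\widetilde z|^{p-1}\in\mathcal F_{\wDomainRPOneD}$, which holds because this map is continuous (hence locally integrable) and, since $p-1>0$, the integrals $\int_{\widetilde z}^{\pm\infty}\sign(\tau)|\tau|^{p-1}\,d\tau$ diverge; Theorem~\ref{thm:Convergence}~\ref{thm:Convergence_v} then gives boundedness of $\wprod$, \ref{thm:Convergence_iii} gives extendability (since $\partial\wDomainRP=\emptyset$ for $\wDomainRP=\R^N$), and \ref{thm:Convergence_iv} gives convergence with $\LossM(\wprodinfty)=0$. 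Also note your remark that ``$\sigmainn'(0)=0$ makes regularity a genuine restriction'' is a red herring for the boundedness check itself — regularity is simply \emph{assumed} in the theorem statement, and what the verification actually hinges on is the divergence of the two integrals above, not avoidance of the origin.
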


\begin{figure}
     \centering
     \begin{subfigure}[b]{0.45\textwidth}
         \centering
         \includegraphics[width=\textwidth]{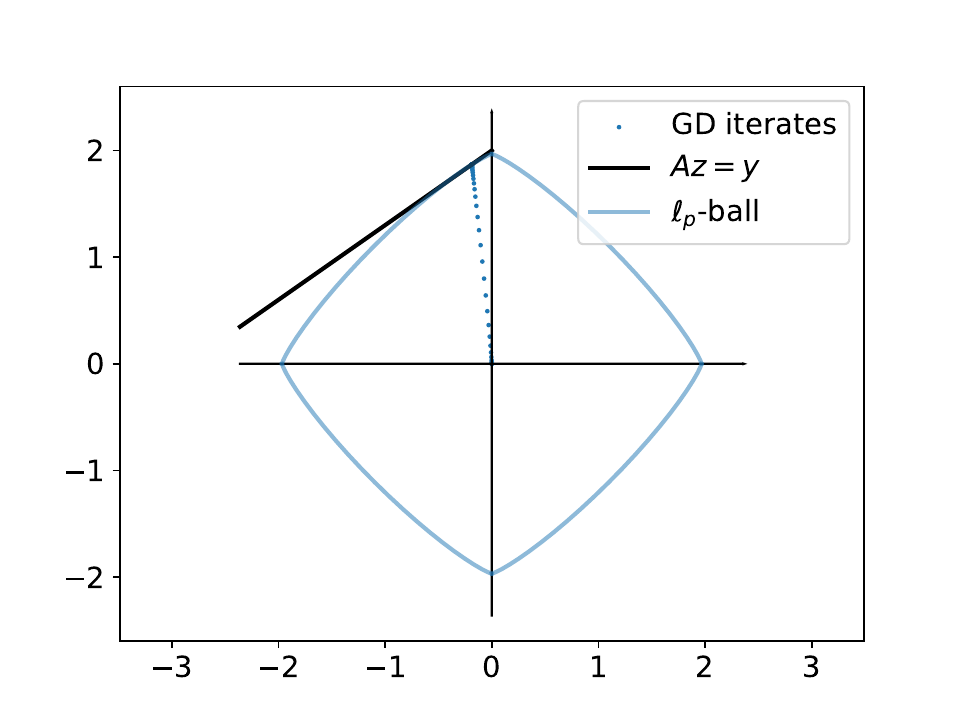}
         \caption{$p = 1.2$}
         \label{fig:1_p=1.2_A1}
     \end{subfigure}
     \hfill
     \begin{subfigure}[b]{0.45\textwidth}
         \centering
         \includegraphics[width=\textwidth]{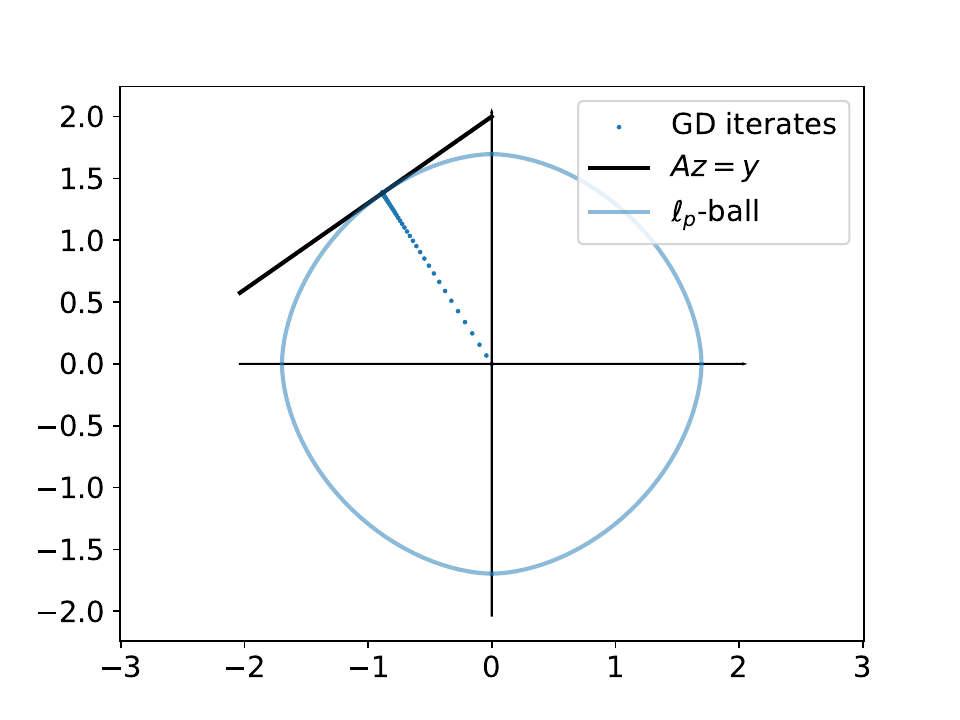}
         \caption{$p = 1.8$}
         \label{fig:1_p=1.8_A1}
     \end{subfigure} \\
     \begin{subfigure}[b]{0.45\textwidth}
         \centering
         \includegraphics[width=\textwidth]{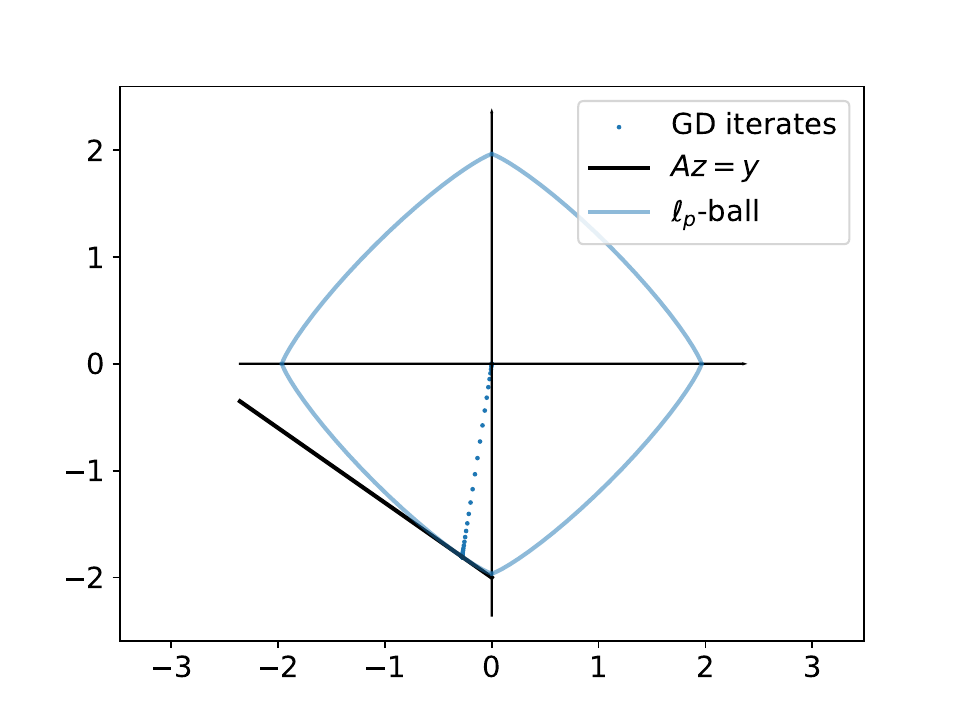}
         \caption{$p = 1.2$}
         \label{fig:1_p=1.2_A2}
     \end{subfigure}
     \hfill
     \begin{subfigure}[b]{0.45\textwidth}
         \centering
         \includegraphics[width=\textwidth]{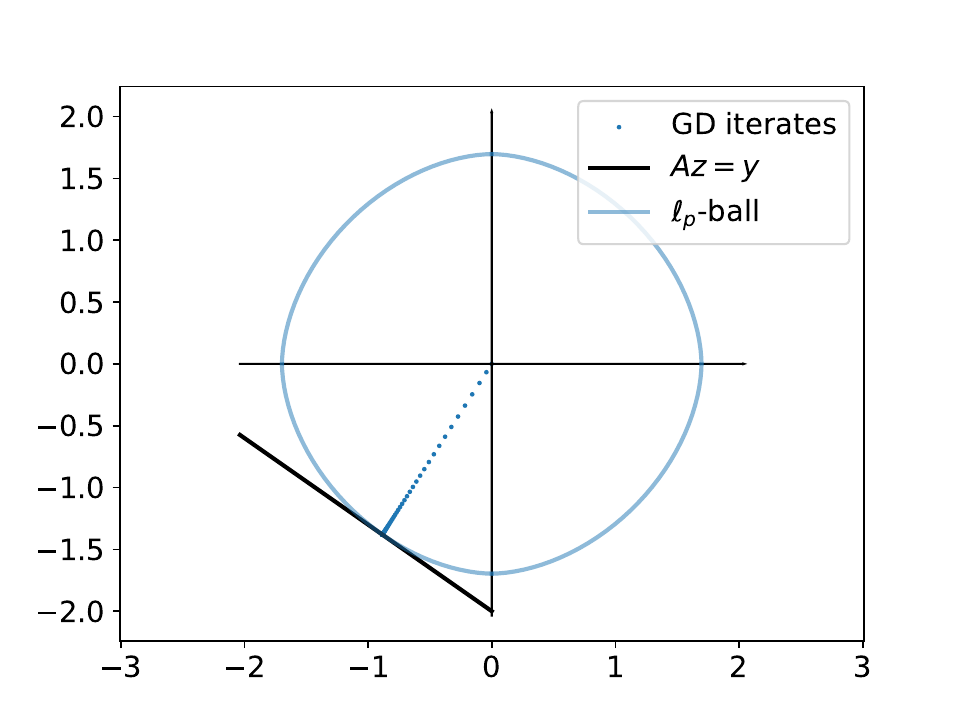}
         \caption{$p = 1.8$}
         \label{fig:1_p=1.8_A2}
     \end{subfigure}
    \caption{\revFinal{Implicit regularization of gradient descent for $\sigmainn(z) = \sign(z)|z|^{\frac{2}{p}}$ with learning rate $10^{-4}$ initialized at $(10^{-4},10^{-4})^T$. We set $\y = 2$, and $L(z,y) = |z-y|^{2}$. In Figures \ref{fig:1_p=1.2_A1} and \ref{fig:1_p=1.8_A1}, we set $\A = (-0.7,1) \in \R^{1\times 2}$. In Figures \ref{fig:1_p=1.2_A2} and \ref{fig:1_p=1.8_A2}, we set $\A = (-0.7,-1) \in \R^{1\times 2}$. The depicted $\ell_p$-ball is scaled to the final GD iterate. }
    }
    \label{fig:Polynomial}
\end{figure}

\begin{figure}
     \centering
     \begin{subfigure}[b]{0.45\textwidth}
         \centering
         \includegraphics[width=\textwidth]{Figure1_p=1,8_A1.pdf}
         \caption{$L(z,y) = |z-y |^{2}$}
         \label{fig:2_L=2}
     \end{subfigure} \\
     \begin{subfigure}[b]{0.45\textwidth}
         \centering
         \includegraphics[width=\textwidth]{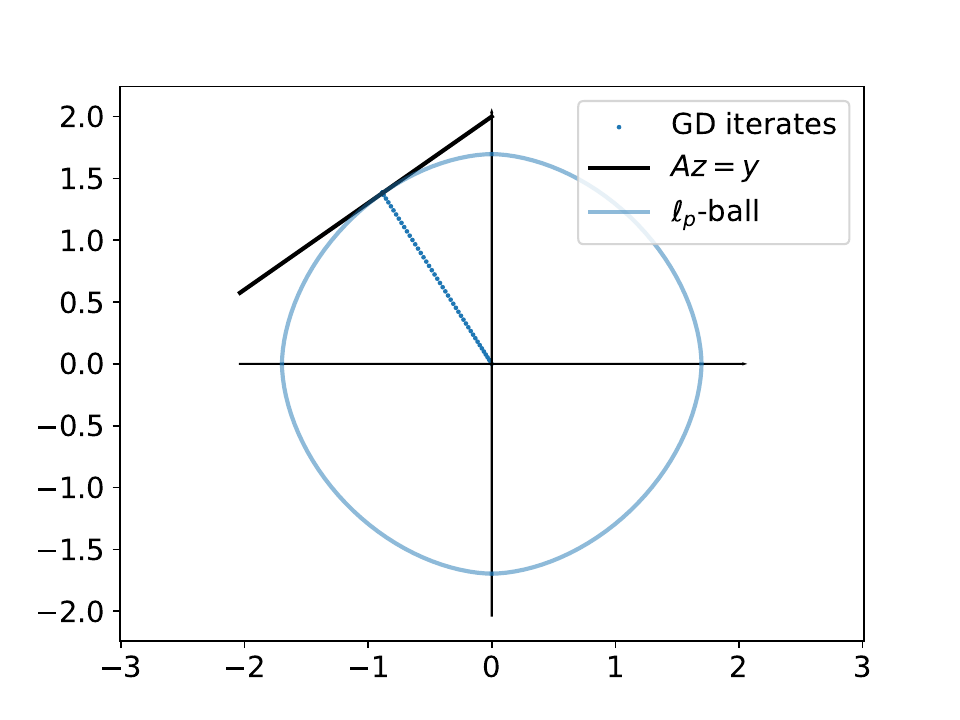}
         \caption{$L(z,y) = |z-y |^{1.1}$}
         \label{fig:2_L=1.1}
     \end{subfigure}
     \hfill
     \begin{subfigure}[b]{0.45\textwidth}
         \centering
         \includegraphics[width=\textwidth]{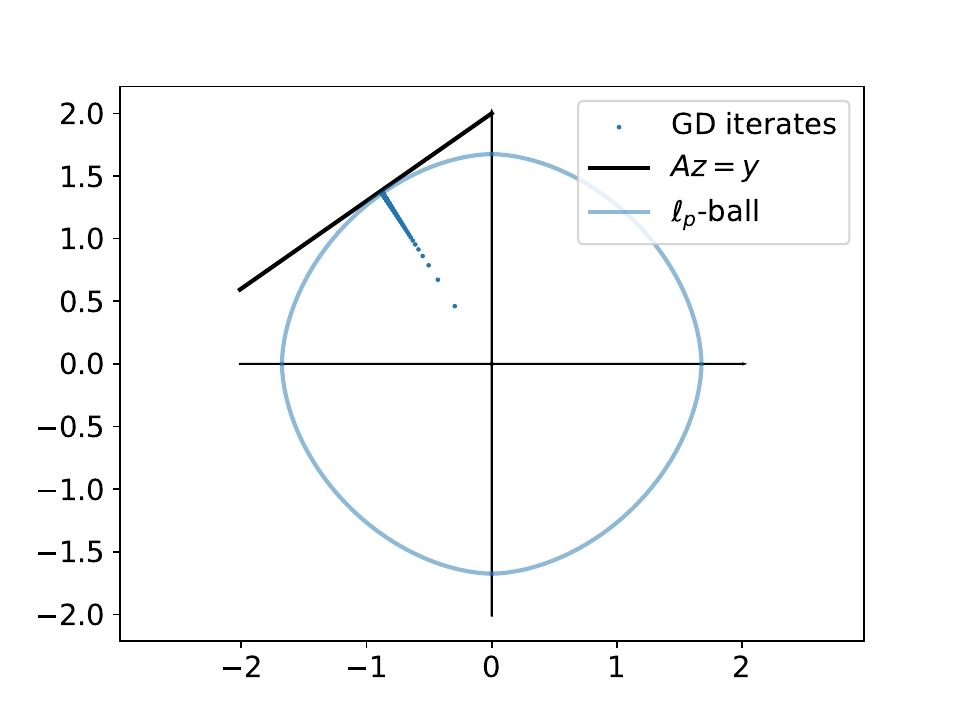}
         \caption{$L(z,y) = |z-y |^{4}$}
         \label{fig:2_L=4}
     \end{subfigure}
    \caption{\revFinal{Implicit regularization of gradient descent for $\sigmainn(z) = \sign(z)|z|^{\frac{2}{p}}$, for $p=1.8$, with learning rate $10^{-4}$ initialized at $(10^{-4},10^{-4})^T$. We set $\y = 2$ and $\A = (-0.7,1) \in \R^{1\times 2}$. For reference, Figure \ref{fig:2_L=2} equals Figure \ref{fig:1_p=1.8_A1}. The depicted $\ell_p$-ball is scaled to the final GD iterate. }
    }
    \label{fig:Polynomial_Varying_L}
\end{figure}

Theorem \ref{thm:PolynomialRegularizationIntroduction} is a direct consequence of Corollaries \ref{cor:PolynomialRegularization} and \ref{cor:PolynomialRegularizationConvergenceRate} below.
It interpolates between established results for implicit $\ell_1$-minimization \cite{vaskevicius2019implicit,woodworth2020kernel,chou2021more} and the implicit $\ell_2$-minimization in \eqref{eq:LS} which corresponds to the case $p=2$. Note that Theorem \ref{thm:PolynomialRegularizationIntroduction} \revFinal{complements} the reparametrizations in \eqref{eq:Lover_simplified} by setting $p = \frac{2}{L}$, for $L < 2$, and recovers the implicit bias characterization in \cite[Theorem 4]{amid2020reparameterizing}, which needed to explicitly assume convergence of the flow. Finally, existing results like \cite{woodworth2020kernel,wu2021implicit} change the initialization scale or a mirror map parameter to obtain $\ell_1$-regularization for very small and $\ell_2$-regularization for very large initialization/mirror map parameters, but do not guarantee $\ell_p$-minimization for $p \in (1,2)$. Theorem \ref{thm:PolynomialRegularizationIntroduction}, in contrast, covers all $\ell_p$-norms between $p=1$ and $p=2$ by varying the reparametrization $\sigmainn$. 
This is illustrated in Figure \ref{fig:Polynomial}, where one can observe that for $p=1.2$, respectively $p=1.8$, gradient descent with sufficiently small initialization converges towards the respective $\ell_p$-minimizers. \revFinal{When comparing different choices of $L$ in Figure \ref{fig:Polynomial_Varying_L}, one can see that $L$ only influences the convergence rate, not the limit of gradient descent. This is as predicted by our theory in Section \ref{sec:MainResults}, cf.\ Theorem \ref{thm:Convergence}.}

The second showcase result applies our insights to trigonometric reparametrizations.

\begin{theorem}
    \label{thm:TrigonometricRegularizationIntroduction}
    Under Assumption \ref{assumption:Intro}, let $\w \colon [0,T) \to \R^\N$ be any solution to \eqref{eq:gd_IRERM} with $\w_0 = \0$, $\A\in\mathbb{R}^{\M\times\N}$, and $\y\in \mathrm{im}(\A)$.
    Then, the following statements hold:
    \begin{enumerate}
        \item \revFinal{Assume} $\sigmainn(\z) = \sinh(\z)$. Then, $\w$ can be extended to a maximal solution $\w : [0,\infty) \rightarrow \R $, the limit $\wprodinfty := \lim_{\t \to \infty} \sigmainn(\w(\t)) \in \R^\N$ exists, and
        \begin{equation}
             \g_\text{sinh} \left( \wprodinfty \right)
             =
              \min_{\substack{\widetilde\z\in\R^\N,\\ \revFinal{L(\A\widetilde\z,\y)}=0}} \g_\text{sinh} \left( \widetilde\z \right),
        \end{equation}
        where
        \begin{align}\label{eq:TrigonometricRegularizationIntro_sinh}
            \g_\text{sinh} (\z) = \langle \z, \arctan(\z) \rangle - \langle \bo, \tfrac{1}{2} \log(\bo +\z^{\odot 2}) \rangle.
        \end{align}
        
        \item \revFinal{Assume} $\sigmainn(\z) = \tanh(\z)$. If there exists $\z \in \R_{\neq 0}^\N$ and $\A\z = \y$,\footnote{The set $\R_{\neq 0}^\N$ contains all vectors with non-zero coordinates. If $\A$ is generated at random with, e.g., iid Gaussian entries, this additional requirement holds with probability one.} $\w$ can be extended to a maximal solution $w : [0,\infty) \rightarrow \R $, the limit $\wprodinfty := \lim_{\t \to \infty} \sigmainn(\w(\t)) \in [-1,1]^\N$ exists, and
        \begin{equation}
             \g_\text{tanh} \left( \wprodinfty \right)
             =
              \min_{\substack{\widetilde\z\in [-1,1]^\N,\\ \revFinal{L(\A\widetilde\z,\y)}=0}} \g_\text{tanh} \left( \widetilde\z \right),
        \end{equation}
        where
        \begin{align}\label{eq:ImplicitRegularization_tanh}
            \g_\text{tanh} (\z) = \langle \z, \artanh(\z) \rangle.
        \end{align}
    \end{enumerate}
\end{theorem}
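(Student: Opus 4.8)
The plan is to derive both parts as direct applications of the general implicit-bias characterization (Theorem \ref{theorem:vector_IRERM}) together with the convergence/extendability results (Theorem \ref{thm:Convergence}), specialized to the two reparametrizations $\sigmainn = \sinh$ and $\sigmainn = \tanh$. The key mechanism is that under Assumption \ref{assumption:Intro} the implicit bias is governed by a Bregman divergence $D_F$ whose potential $F$ is determined by $\sigmainn$ through the relation $\nabla F = \sigmainn^{-1}$ (acting entry-wise); since $\w_0 = \0$ we have $\sigmainn(\w_0) = \0$, and the Bregman divergence $D_F(\,\cdot\,,\0)$ reduces (up to an affine term that is constant on the affine solution set $\{\widetilde\z : \A\widetilde\z = \y\}$, using $\y\in\mathrm{im}(\A)$) precisely to the function $\g$ whose gradient is $\sigmainn^{-1}$ and which vanishes at $\0$. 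So the whole proof is: (a) verify regularity/extendability so that the limit $\wprodinfty$ exists and is a global minimizer, (b) compute $\sigmainn^{-1}$ and antidifferentiate entry-wise to identify $\g$, and (c) invoke the Bregman-minimality conclusion of Theorem \ref{theorem:vector_IRERM}.

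For part (i), $\sigmainn(z)=\sinh(z)$ has $\sigmainn'(z) = \cosh(z) > 0$ everywhere, so every flow is automatically regular and the regularity hypothesis in Theorem \ref{theorem:vector_IRERM} is free; extendability to $[0,\infty)$ and convergence to a global minimizer should follow from Theorem \ref{thm:Convergence} (with $\sigmaout = \mathrm{Id}$), checking the boundedness criterion --- here $\sinh$ is a bijection $\R\to\R$ so the reparametrized iterate can reach any target, and one uses that $\y\in\mathrm{im}(\A)$ guarantees a zero-loss point exists. The potential is obtained from $(\sinh)^{-1} = \mathrm{arcsinh}$; but the claimed $\g_\text{sinh}$ is written in terms of $\arctan$ and $\log(\bo+\z^{\odot2})$, so a small lemma is needed to show $\int_0^s \mathrm{arcsinh}(u)\,du$ equals $s\,\mathrm{arcsinh}(s) - \sqrt{1+s^2} + 1$, and then to reconcile this with the stated expression $\langle \z,\arctan(\z)\rangle - \langle\bo,\tfrac12\log(\bo+\z^{\odot2})\rangle$ --- I expect this reconciliation to require either that the paper's convention identifies $\mathrm{arcsinh}$ with $\arctan$ through a branch/notation choice, or that an extra lemma rewrites the integral; verifying $F(\z)=\g_\text{sinh}(\z)$ by differentiating $\g_\text{sinh}$ and matching $\nabla\g_\text{sinh} = \sigmainn^{-1}$ is the cleanest route, so I would just differentiate the stated formula and confirm it gives the correct integrand, treating any apparent mismatch as a computation to be carried out carefully rather than a structural gap.

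For part (ii), $\sigmainn(z)=\tanh(z)$ maps $\R$ onto $(-1,1)$, so $\sigmainn'(z) = \mathrm{sech}^2(z) > 0$ (again every flow is regular), but now the reachable reparametrized values lie in $[-1,1]^\N$, which is why the minimization in the conclusion is constrained to $[-1,1]^\N$ and why one needs the extra hypothesis that some $\z\in\R^\N_{\neq 0}$ with nonzero coordinates satisfies $\A\z=\y$: this guarantees the zero-loss set intersects the open cube $(-1,1)^\N$ after rescaling, i.e.\ the target is attainable as a limit of $\tanh$-images, which is what makes the flow bounded and convergent via Theorem \ref{thm:Convergence}. The potential comes from $(\tanh)^{-1} = \artanh$, and $\int_0^s \artanh(u)\,du = s\,\artanh(s) + \tfrac12\log(1-s^2)$; I would again verify the stated $\g_\text{tanh}(\z) = \langle\z,\artanh(\z)\rangle$ by differentiation, noting that the $\tfrac12\log(\bo-\z^{\odot2})$ term is affine-invariant-free but actually must be retained for $\nabla\g_\text{tanh} = \artanh$ to hold --- so either the stated $\g_\text{tanh}$ is the ``essential part'' of the potential (the log term being handled as part of the Bregman-divergence bookkeeping that cancels on the constraint set, though here it does \emph{not} cancel), or there is a sign/normalization subtlety to resolve; this is the main obstacle I anticipate, and I would resolve it by tracking precisely how Theorem \ref{theorem:vector_IRERM} phrases the minimized functional (Bregman divergence $D_F(\widetilde\z,\sigmainn(\w_0))$ versus $F(\widetilde\z)$ itself) and using $\sigmainn(\w_0)=\tanh(\0)=\0$ together with $D_F(\widetilde\z,\0) = F(\widetilde\z) - F(\0) - \langle\nabla F(\0),\widetilde\z\rangle = F(\widetilde\z)$ since $\nabla F(\0) = \artanh(0) = 0$ and $F(\0)=0$.

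The main obstacle, then, is not the convergence/extendability part (which is a direct citation of Theorem \ref{thm:Convergence} once the boundedness criterion is checked using the stated hypotheses on $\y$), but the bookkeeping of the potential: confirming that the functional appearing in the Bregman-minimality conclusion of Theorem \ref{theorem:vector_IRERM} collapses exactly to $\g_\text{sinh}$ resp.\ $\g_\text{tanh}$ as written. Since $\w_0=\0$ is the nicest possible base point ($\nabla F(\0)=0$, $F(\0)=0$ for the natural antiderivative vanishing at the origin), I expect this to come out cleanly provided the stated formulas are the antiderivatives of $\mathrm{arcsinh}$ and $\artanh$ (vanishing at $0$), which reduces the whole matter to two one-variable integral checks plus one invocation each of Theorems \ref{theorem:vector_IRERM} and \ref{thm:Convergence}.
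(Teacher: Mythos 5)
There is a genuine gap, and it sits exactly where you yourself flag a ``reconciliation'' problem: your formula for the Bregman potential is wrong. You posit $\nabla F = \sigmainn^{-1}$ (entry-wise), so that $F$ is the antiderivative of $\mathrm{arcsinh}$ resp.\ $\artanh$. The paper's Theorem \ref{theorem:vector_IRERM} instead gives the potential through its \emph{second} derivative: $H'' = h' = \big([\sigmainn^{-1}]'\big)^2$, i.e.\ the square of the derivative of the inverse reparametrization (this is forced by the dynamics: $\wprod' = -\sigmainn'(\w)^{\odot 2}\odot\nabla\LossM(\wprod)$, so the mirror map's Hessian must be $\mathrm{diag}(1/\sigmainn'(\w)^{\odot 2})$). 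For $\sigmainn=\sinh$ this gives $h'(\widetilde z) = 1/(1+\widetilde z^2)$, hence $h=\arctan$ and $H(\widetilde z)=\widetilde z\arctan(\widetilde z)-\tfrac12\log(1+\widetilde z^2)$ --- the $\arctan$ in $\g_{\sinh}$ arises from squaring $1/\sqrt{1+\widetilde z^2}$, not from any branch or notational identification of $\mathrm{arcsinh}$ with $\arctan$ (there is none; these are different functions). For $\sigmainn=\tanh$ one gets $h'(\widetilde z)=1/(1-\widetilde z^2)^2$, $h(\widetilde z)=\tfrac12(\artanh(\widetilde z)+\tfrac{\widetilde z}{1-\widetilde z^2})$ and $H(\widetilde z)=\tfrac12\widetilde z\,\artanh(\widetilde z)$, which is exactly $\tfrac12\g_{\tanh}$ with no missing $\log$ term. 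If you carried out your stated plan (``differentiate the stated formula and confirm it gives the correct integrand''), you would find $\g_{\sinh}'=\arctan\neq\mathrm{arcsinh}$ and incorrectly conclude that the theorem's regularizer is wrong; the mismatch is not bookkeeping but a wrong ansatz for the potential.

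A secondary but real gap is the $\tanh$ convergence argument. You assert that the hypothesis $\exists\,\z\in\R^\N_{\neq 0}$ with $\A\z=\y$ ``makes the flow bounded and convergent via Theorem \ref{thm:Convergence},'' but boundedness of $\wprod$ in $(-1,1)^\N$ is automatic (the range of $\tanh$); the issue is that the trajectory could reach $\partial\wDomainRP=\partial[-1,1]^\N$ in finite time, at which point the solution is maximal and cannot be extended. The paper excludes this by a quantitative argument: using the monotone decay of $t\mapsto D_F(\widetilde\z_0,\wprod(t))$ (Lemma \ref{lem:decay}) together with the explicit form of $D_H$ for the $\tanh$-potential and the nonvanishing coordinates of $\widetilde\z_0$, it derives bounds $\gamma_-< \wprode_i(t)<\gamma_+$ strictly inside $(-1,1)$. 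That step is where the $\R^\N_{\neq 0}$ hypothesis is actually consumed, and it is not a one-line citation. The overall architecture of your proposal (Theorem \ref{theorem:vector_IRERM} for the bias, Theorem \ref{thm:Convergence} for extendability/convergence, entry-wise computation of the potential, exploitation of $\wprod_0=\0$) matches the paper, but both the central computation and the $\tanh$-boundedness step need to be redone.
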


%\revFinal{
%To illustrate the applicability of Theorem~1.5, consider a choice of reparametrization $\sigmainn$ that induces a specific growth behavior through its derivative. For instance, selecting $\sigmainn$ from a parametric family allows one to interpolate between different implicit regularization regimes, in analogy with the discussion following Theorem~1.4. More generally, Theorem~1.5 shows that the limiting behavior of the flow is governed by the induced geometry encoded in $\sigmainn$, and thus provides a systematic way to construct regularizers with prescribed properties by appropriately designing the reparametrization.
%}

Theorem \ref{thm:TrigonometricRegularizationIntroduction} is a direct consequence of Corollaries \ref{cor:TrigonometricRegularization} and \ref{cor:TrigonometricRegularizationConvergence}. 
Whereas in Theorem \ref{thm:PolynomialRegularizationIntroduction} we could not set $\w_0 = \0$  since this is a saddle point of the loss function, in Theorem \ref{thm:TrigonometricRegularizationIntroduction} we can do so.
Observing furthermore that $\arctan (\ze) \approx \frac{\pi}{2} \text{sign} (\ze) $ for $\vert \ze \vert \gg 1$ and $ \arctan \left( \ze  \right)  \approx \ze  $ for $\vert \ze \vert \ll 1 $, we note that 
\begin{align}
    (\langle \z, \arctan (\z)\rangle)_i
    \approx
    \begin{cases}
        \frac{\pi}{2} \vert \ze_i \vert  &\text{if }\vert \ze_i \vert \gg 1\\
        \vert \ze_i \vert^2 &\text{if } \vert \ze_i \vert\ll 1 
    \end{cases}
    \label{eq:arctanhApprox}
\end{align}
which has the property that it behaves globally like the $\ell_1$-norm but is smooth around $0$. 
Since the additional $\log$-term in $\g_\text{sinh}$ is almost negligible in comparison, both regularizers $\g_\text{sinh}$ \revFinal{and $\g_\text{tanh}$ can be viewed as \emph{smooth} approximations} of the Huber loss without an additional hyper-parameter, cf.\ Figure \ref{fig:G1Huber_compare}. \revFinal{To the best of our knowledge, the two types of implicit regularization appearing in Theorem \ref{thm:TrigonometricRegularizationIntroduction} have not been discussed in existing literature. In light of the non-smoothness of the Huber loss, they might however prove useful in future research due to their smoothness.}

\begin{figure}[t!]
    \centering
    \includegraphics[width= 0.6\textwidth]{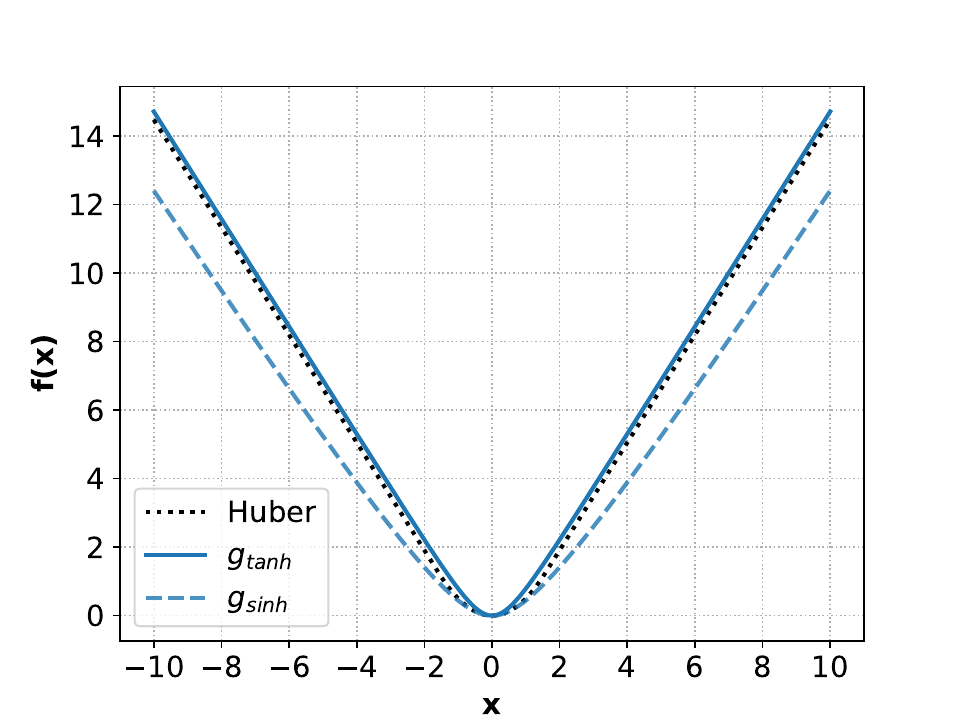}
    \caption{\revFinal{Comparison between $\g_\text{tanh}$, $\g_\text{sinh}$, and the Huber function \cite{Huber1964} with parameter $\frac{\pi}{2}$.}}
    \label{fig:G1Huber_compare}
\end{figure}

\begin{remark}
\label{rem:IRERM}
    The theory we present in the following is based on Definition \ref{definition:IRERM} which strongly restricts the choice of $\sigmainn$ and $\sigmaout$ to univariate functions. Let us mention that all results can be generalized in a straight-forward way to cover \emph{any} pair of entry-wise defined link and reparametrization functions, i.e., all functions $\sigmaout \colon \R^\M \to \R^\M$ and $\sigmainn \colon \R^\N \to \R^\N$ of the form
    \begin{align*}
        \sigmaout(\z) &= (\sigmaout^{(1)}(\ze_1),\dots , \sigmaout^{(\M)}(\ze_\M) )^T,\\
        \sigmainn(\z) &= (\sigmainn^{(1)}(\ze_1),\dots , \sigmainn^{(\N)}(\ze_\N) )^T,
    \end{align*}
    where $\sigmaout^{(1)},\dots,\sigmaout^{(\M)}, \sigmainn^{(1)},\dots, \sigmainn^{(\N)} \colon \R \to \R$ are real-valued functions. For the sake of conciseness, we refrain from presenting the results in full generality. Also note that the recent work \cite{Li2022implicit} allows even more general choices of $\sigmainn$ but requires twice continuous differentiability. 
\end{remark}

\subsection{Related work}
\label{sec:related_work}

Before turning to the mathematical details of our work, let us review the recent progress in the field.
Since modern machine learning models such as neural networks are typically highly overparameterized \cite{zhang17}, it is important to understand to which minimizer gradient descent converges.
\textit{Algorithmic/implicit regularization} refers to the phenomenon that gradient descent tends to pick minima with low intrinsic complexity even when no regularization is explicitly enforced.
Indeed, empirically it has been observed that the weights of trained (deep) neural networks are often approximately low-rank \cite{huh2023simplicitybias}.
However, a precise theoretical explanation does not seem to be within reach of current techniques.

Nonetheless, the deep learning theory community has made progress in understanding the implicit regularization phenomenon by studying simplified models.
For example, implicit regularization has been observed in classification tasks, see e.g., \cite{soudry2018implicit,nacson2019convergence,moroshko2020implicit,ji2019implicit,ji2021characterizing},
where gradient-based methods favor max-margin solutions.

For regression tasks, the implicit regularization on reparameterized loss functions has been studied in \cite{vaskevicius2019implicit,woodworth2020kernel,amid2020reparameterizing,amid2020winnowing,yun2021a,azulay2021implicit,li2021implicit,chou2021more}.
A key insight of this line of research is that gradient flow on the reparameterized function can be interpreted as mirror flow on the original function and that mirror flow converges to the solution which is closest to the initialization with respect to the Bregman divergence induced by the mirror map.
Recent work aims to characterize whether gradient flow for a particular reparameterization can be written as an equivalent mirror flow \cite{gunasekar2021mirrorless,Li2022implicit}. In a more general setting but under stronger assumptions on the reparametrization map (including higher order differentiability), the authors of \cite{Li2022implicit} derive convergence guarantees and a relation between reparametrization and implicit bias that is analogous to \eqref{eq:hprimedefinitionWithoutPositivity} below.

Building on these insights in linear regression, the authors of \cite{chou2022non} use the reparametrization in \eqref{eq:Lover_simplified} to encode convex constraints into the optimization landscape. In \cite{kolb2023smoothing}, the authors discuss how to use Hadamard product reparametrization to replace non-smooth $\ell_{p,q}$-regularized problems by smooth $\ell_2$-regularized counterparts. In the context of sparse phase retrieval, mirror flow with hypentropy mirror map and the closely related quadratically reparameterized Wirtinger flow have been studied in \cite{wu2020continuous,wu2023nearly}. %

Moving beyond vector-valued problems, implicit regularization has also been studied in overparameterized low-rank matrix recovery.
In the influential work \cite{gunasekar2017implicit} it has been observed that factorized gradient descent converges to a solution with low-rank when the initialization is chosen sufficently small. %
A number of works investigated both the symmetric setting \cite{li2018algorithmic,litowards,stoger2021small,xu2023power,ding2022validation,jin2023understanding} and the asymmetric setting \cite{soltanolkotabi2023implicit}.
In \cite{arora2019implicit,chou2020implicit} the authors studied the scenario with more than two factors, which sometimes is referred to as \textit{deep matrix factorization}.
In \cite{wu2021implicit}, it has been shown that mirror descent with sufficiently small initialization converges to the solution with lowest nuclear norm.
In \cite{razin2020implicit} an example was presentend, in which factorized gradient descent with small initialization converges to a low-rank solution which is not the nuclear norm minimizer.
In the setting of low-rank tensor recovery, \cite{Razin2021implicitTensor} investigated the bias of gradient descent towards low-rank tensors.

This paper, along with most of the aforementioned works, focuses on how implicit regularization is influenced by a combination of reparametrization and scale of initialization.
It is worth to note that there are other mechanisms which induce algorithmic regularization such as weight normalization \cite{chou2023robust} or label noise \cite{Pesme2021implicit,vivien2022label}. In  \cite{andriushchenko2022sgd,even2023s} an intriguing connection regarding implicit regularization induced by large step sizes coupled with SGD noise has been discussed.

\subsection{Outline and Notation}

The remaining paper is structured as follows. In Section \ref{sec:MainResults}, we will present our results in full generality and discuss how to derive Theorems \ref{thm:PolynomialRegularizationIntroduction} and \ref{thm:TrigonometricRegularizationIntroduction} from them. Parts of the proofs are deferred to Section \ref{sec:Proof}.

We briefly introduce the notation we use in the rest of the work. For $\N\in\mathbb{N}$, we denote $[\N]=\{1,2,\dots,N\}$. We abbreviate the sets of non-negative, positive, and non-zero real numbers by $\R_{\ge 0}$, $\R_{>0}$, and $\R_{\neq 0}$, respectively. Boldface lower-case letters such as $\z$ represent vectors with entries $\ze_n$, while boldface upper-case letters such as $\A$ represent matrices with entries $\Ae_{mn}$. We denote by $\sigma_{\min}(\A)$ the smallest non-zero singular value of $\A$. For vectors $\u,\v \in\mathbb{R}^\N$, $\u\geq\v$ means that $u_\n\geq v_\n$ for all $\n\in[\N]$. In addition, $\u \geq 0$ means that $u_\n\geq 0$ for all $\n\in[\N]$. The operator $\mathrm{diag}$ can be applied to matrices and vectors. In the former case, it extracts the diagonal of the input as a vector; in the latter case, it embeds the input as diagonal into a square matrix. We will use $\0$ and $\bo$ to denote the all-zero and all-ones vector. The dimensions of both will always be clear from the context.\\

\textbf{Reparametrized quantities:} Since our work considers reparametrizations of the input space $\R^\N$, we will use tilde-symbols to distinguish a plain vector $\z \in \R^\N$ from its reparametrization $\widetilde\z = \sigmainn(\z) \in \R^N$, which is usually only compared to other reparametrized vectors. In a similar way, the reparametrized version of the gradient flow $\w(\t)$ with domain $\R^\N$ will be abbreviated by $\wprod(\t) = \sigmainn(\w(\t))$ with (reparametrized) domain $\wDomainRP \subset \R^\N$.\\

\textbf{Hadamard calculus:} We use $\odot$ to denote Hadamard products and powers, e.g., $(\z\odot\y)_n = \ze_n\ye_n$ and $(\z^{\odot\L})_n = \ze_n^{\L}$. We denote the closed positive orthant by $\R_{\ge 0}^\N$. For any $S \subset \R^\N$,  $\mathrm{int}(S)$ is the interior of $S$ and $\partial S$ the boundary of $S$. If $\h:\mathbb{R}\to\mathbb{R}$ is a real-valued function, it acts entry-wise on vectors, i.e., $\h(\z) \in \mathbb{R}^{\N}$ with $[\h(\z)]_{\n} = \h(\ze_n)$. If $\h:\mathbb{R}^{\N}\to\mathbb{R}^{\N}$ is a decoupled function, i.e.
\begin{equation*}
    \h(\z) = (\h_1(\ze_1),\dots , \h_\N(\ze_\N) )
\end{equation*}
for functions $\h_1, \h_2, \ldots, \h_N: \R \to \R$, we write its entry-wise derivative as $\h':\mathbb{R}^{\N}\to\mathbb{R}^{\N}$ where
\begin{equation*}
    \h'(\z) = (\h_1'(\ze_1),\dots , \h_\N'(\ze_\N) ).
\end{equation*}

\section{Reparametrizing gradient flow on \revFinal{generalized linear models (GLMs)}}
\label{sec:MainResults}

In this section, we present our results in full detail and explain how to derive Theorems \ref{thm:PolynomialRegularizationIntroduction} and \ref{thm:TrigonometricRegularizationIntroduction} from them. All omitted proofs can be found in Section \ref{sec:Proof} below.

\subsection{General framework and main result}
First, we describe our general framework and state our main result in full generality.

\begin{assumption}
\label{assumption:Main}
    Let $\A\in\mathbb{R}^{\M\times\N}$ and $\y\in\mathbb{R}^{\M}$. We will use the following set of assumptions.
    \begin{enumerate}[label=(A\arabic*)]
        \item \underline{Properties of $\sigmaout$}: \revFinal{The function} $\sigmaout \colon \R \to \R$ is continuously differentiable and injective, and $\y \in  \sigmaout (\R^M)$.
        \label{as:A1}
        \item \label{as:A2}\underline{Properties of $\sigmainn$}: \revFinal{There exists an interval $\wDomainRPOneD \subset \R$ such that $\sigmainn \colon \R \to \wDomainRPOneD$ is invertible and continuously differentiable, and 
        $\sigmainn' (x) \neq 0$ almost everywhere.} \revFinal{Furthermore,
        \begin{equation*}
        ([\sigmainn^{-1}]')^2
        \in L_1^c(\wDomainRPOneD) 
        = \{f:  f\vert_C \in L_1(C) \text{ for any compact set } C \subset \wDomainRPOneD \},        
        \end{equation*}
        where $([\sigmainn^{-1}]')^2$ is defined almost everywhere on $\wDomainRPOneD$.}
        
        \item \underline{Properties of $L$}: $L \colon \R^\M \times \R^\M \to \R$ is continuously differentiable, convex in the first argument, and satisfies $\min L = 0$ with $L(\z,\z') = \min_{\z,\z' \in \R^\M} L(\z,\z')$ if and only if $\z = \z'$.
        \label{as:A3}
        \item There exists $\widetilde \z_0 \in \wDomainRP$ with $\revFinal{\sigmaout(\A\widetilde \z_0)} = \y$.\footnote{Note that \revFinal{under Assumptions \ref{as:A2} and \ref{as:A3} this assumption is equivalent to $\mathcal{L}$ realizing} its minimum at zero. 
        }
        \label{as:B2}
    \end{enumerate}
    Parts of the results will require the additional assumption that
    \begin{enumerate}[label=(A\arabic*)]
        \setcounter{enumi}{4}
        \item \revFinal{The function $\v \mapsto L(\sigmaout(\v),\y)$ is convex.}
        \label{as:B1}
    \end{enumerate}
\end{assumption}

\begin{remark}
\label{rem:Assumptions}
    \revFinal{In this paper we will apply our results only to reparametrization functions $\sigmainn \colon \R \to \R$ with $\sigmainn'(x) = 0$ for at most finitely many points.  Nevertheless, a general gradient flow trajectory $\w \colon [0,T] \to \R^N$ on \eqref{eq:IRERM} can hit such degenerate points of $([\sigmainn^{-1}]')^2$, uncountably many times, prohibiting integrability of the composite function $([\sigmainn^{-1}]')^2 \circ \w$ which is needed in our analysis. Our restriction in the following results to regular flows for which $|T_\circ| := |\{ t \in [0,T] \colon \sigmainn'(w(t)_i) = 0, \text{ for } i \in [N] \}| < \infty$ (see Definition \ref{definition:IRERM}) is thus critical in all settings in which $\sigmainn'(x) = 0$ for at least one point $x \in \R$. As argued in Remark \ref{rem:MainDefinition} (iii), this does not diminish the impact of our results for understanding the implicit bias of gradient descent.}
\end{remark}

Assuming convergence of \eqref{eq:gd_IRERM} to global optimality in \eqref{eq:IRERM}, Theorem \ref{theorem:vector_IRERM} below characterizes to which minimizer the flow converges. In particular, among all global minimizers of $\IRERM$ the limit of gradient flow minimizes the distance to the initialization $\w_0$ measured in terms of a Bregman divergence that only depends on the choice of $\sigmainn$. Let us briefly recall the definition of Bregman divergences.

\begin{definition}[Bregman Divergence]\label{def:Bregman_Divergence}
Let $\F:\Omega\to\mathbb{R}$ be a continuously differentiable, strictly convex function defined on a closed convex set $\Omega$.
The Bregman divergence associated with $\F$ for points $p\in\Omega$ and $ q \in \mathrm{int} \left( \Omega \right) $ is defined as
\begin{equation}
    D_{\F}(p,q) = \F(p) - \F(q) - \langle \nabla \F(q), p-q \rangle.
\end{equation}
\end{definition}

Theorem \ref{theorem:vector_IRERM} only requires that the reparametrization $\sigmainn$ has one continuous derivative whereas existing work \cite{Li2022implicit} requires that $\sigmainn$ has two continuous derivatives. 

\begin{theorem}[Implicit Bias]\label{theorem:vector_IRERM}
    Let $\w: [0,T) \to \mathbb{R}^N$ with $T \in \R_{>0} \cup \{+\infty\}$ be any solution to the differential equation \eqref{eq:gd_IRERM} in Definition \ref{definition:IRERM}, for $\A\in\mathbb{R}^{\M\times\N}$, $\y\in\mathbb{R}^{\M}$, $\sigmainn \colon \R \to \R$, $\sigmaout \colon \R \to \R$, and $L \colon \R^\M \times \R^\M \to \R$.
    Define the reparametrized flow and loss-function
    \begin{align*}
        \wprod (t) := \sigmainn(\w (t))
        \quad
        \text{and} \quad \LossM(\widetilde{\z}) = L\big( \sigmaout(\A \widetilde{\z}), \y \big),
    \end{align*}
    for all $\widetilde{\z} \in \mathbb{R}^{N}$. \revFinal{Let Assumptions \ref{as:A1} -- \ref{as:B2} hold} and
    assume that $\w$ is regular.
    
    We denote by $\h \colon \wDomainRPOneD \to \R$ a continuous antiderivative of
    \begin{align}\label{eq:hprimedefinitionWithoutPositivity}
        \h'(\widetilde z ) 
        := \big( [\sigmainn^{-1}]'(\widetilde z) \big)^2,
    \end{align}
    and we denote by $\H \colon \wDomainRPOneD \to \R$ a continuous antiderivative of $h$.\footnote{By \ref{as:A2}, we know that $h'$ is integrable and thus has a continuous antiderivative. \label{footnote:h}}
    If the limit $\wT:=\lim_{\t\to T}\w(\t)$ exists and satisfies $\mathcal{L}(\wT) = 0$, and if at least one of the following two conditions holds
        \begin{itemize}
            \item $\wprodT \in \wDomainRP$ or
            \item the continuous extensions of $\h$ and $\H$ to $\overline{\wDomainRPOneD}$ are well-defined and finite-valued,
        \end{itemize}
        then
        \begin{equation}\label{eq:implicit_regularization_Bregman}
            \wprodT \in \argmin_{\widetilde\z\in\overline{\wDomainRP}, \LossM(\widetilde\z) = 0 %
            } \langle \bo, \H(\widetilde\z) - \widetilde\z\odot\h(\wprod_0) \rangle.
        \end{equation}
        
        Denote by $D_{F}(\widetilde\z,\wprod_0)$ the Bregman divergence defined on $\wDomainRP$ with respect to $F(\widetilde\z) := \langle \bo, \H(\widetilde\z)\rangle$ and extended to $\overline{\wDomainRP}$ by continuity (including $+\infty$ as admissible objective value).\footnote{\label{footnote:H} It is easy to verify in Definition \ref{def:Bregman_Divergence} that $D_F$ is invariant under adding affine functions to $F$ and thus independent of the particular choices of $\h$ and $\H$.}
        Then minimizing the objective in \eqref{eq:implicit_regularization_Bregman} is equivalent to minimizing the Bregman divergence $D_{F}(\widetilde\z,\wprod_0)$ in the sense that the set of global minimizers is the same.
        
\end{theorem}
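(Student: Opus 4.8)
The plan is to prove the implicit bias characterization \eqref{eq:implicit_regularization_Bregman} by first establishing that the reparametrized trajectory $\wprod(t)$ stays inside a one-dimensional mirror-flow-like manifold, and then to identify the variational problem that its limit must solve. The key conserved quantity is the gradient of $H$ applied entry-wise, since $\h$ serves as the "mirror map" link. Concretely, from \eqref{equ:intern33} we have
\begin{equation*}
    \w'(t) = -\big[ \A^T r(t) \big] \odot \sigmainn'(\w(t)),
    \qquad r(t) := \nabla_\v L\big( \sigmaout(\v),\y\big)\big|_{\v = \A\sigmainn(\w(t))}.
\end{equation*}
The first step is to change variables to $\wprod = \sigmainn(\w)$, giving $\wprod'(t) = \sigmainn'(\w(t)) \odot \w'(t) = -[\sigmainn'(\w(t))]^{\odot 2} \odot \A^T r(t)$. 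Using $\sigmainn'(\w) = 1/[\sigmainn^{-1}]'(\wprod)$ on each regular subinterval, and recalling $\h'(\wprod) = ([\sigmainn^{-1}]'(\wprod))^2$, we obtain
\begin{equation*}
    \wprod'(t) = -\frac{1}{\h'(\wprod(t))}\odot \A^T r(t),
    \qquad\text{equivalently}\qquad
    \frac{d}{dt}\,\h(\wprod(t)) = -\A^T r(t).
\end{equation*}
Here regularity of $\w$ is exactly what lets us integrate across the finitely many times at which $\sigmainn'$ vanishes (a measure-zero set), and the assumption $([\sigmainn^{-1}]')^2 \in L_1^c$ ensures $\h$ is well-defined and continuous. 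Integrating in $t$ shows $\h(\wprod(t)) - \h(\wprod_0) \in \mathrm{im}(\A^T) = \mathrm{ker}(\A)^\perp$ for all $t$, hence in the limit $\h(\wprodT) - \h(\wprod_0) \perp \mathrm{ker}(\A)$.

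The second step converts this orthogonality constraint into the variational statement. Consider the convex problem of minimizing $G(\widetilde\z) := \langle \bo, \H(\widetilde\z) - \widetilde\z \odot \h(\wprod_0)\rangle$ over the affine feasible set $\{\widetilde\z : \LossM(\widetilde\z)=0\}$; since $\LossM$ is convex in the first argument with minimum value $0$ (by \ref{as:A3}) and $\sigmaout$ is injective (by \ref{as:A1}), this level set equals $\{\widetilde\z : \A\widetilde\z = \A\widetilde\z_0\}$ for the limiting feasible $\widetilde\z_0$, an affine subspace. The gradient of $G$ is $\nabla G(\widetilde\z) = \h(\widetilde\z) - \h(\wprod_0)$ (using $\H' = \h$ entry-wise), and $G$ is convex because $\H' = \h$ is nondecreasing (as $\h' = ([\sigmainn^{-1}]')^2 \ge 0$). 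The first-order optimality condition for minimizing the convex $G$ over an affine set is precisely that $\nabla G(\widetilde\z) \perp \mathrm{ker}(\A)$, i.e., $\h(\widetilde\z) - \h(\wprod_0) \in \mathrm{im}(\A^T)$ — which is exactly what Step 1 delivered for $\wprodT$. Thus $\wprodT$ is a global minimizer. The boundary cases ($\wprodT \in \partial\wDomainRP$) are handled by the hypothesis that the continuous extensions of $\h$ and $\H$ to $\overline{\wDomainRPOneD}$ are finite, allowing the same optimality argument over $\overline{\wDomainRP}$ with $G$ extended by continuity (possibly taking value $+\infty$ elsewhere, which does not affect minimizers).

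The final step — the statement actually requested — is the equivalence with the Bregman divergence $D_F(\widetilde\z, \wprod_0)$ for $F(\widetilde\z) = \langle \bo, \H(\widetilde\z)\rangle$. This is essentially bookkeeping: by Definition \ref{def:Bregman_Divergence},
\begin{equation*}
    D_F(\widetilde\z, \wprod_0) = F(\widetilde\z) - F(\wprod_0) - \langle \nabla F(\wprod_0), \widetilde\z - \wprod_0\rangle
    = \langle \bo, \H(\widetilde\z)\rangle - \langle \widetilde\z, \h(\wprod_0)\rangle + c,
\end{equation*}
where $c = -F(\wprod_0) + \langle \wprod_0, \h(\wprod_0)\rangle$ is a constant independent of $\widetilde\z$. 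Hence $D_F(\widetilde\z,\wprod_0)$ and the objective in \eqref{eq:implicit_regularization_Bregman} differ by an additive constant, so they have the same set of minimizers over $\{\widetilde\z \in \overline{\wDomainRP} : \LossM(\widetilde\z)=0\}$; strict convexity of $F$ on $\wDomainRP$ (which needs $\h$ strictly increasing — guaranteed on the interior by $\h' > 0$ a.e. and $h$ continuous, after noting $\sigmainn$ invertible forces $[\sigmainn^{-1}]'$ nonzero a.e.) makes $D_F$ a bona fide Bregman divergence in the sense of Definition \ref{def:Bregman_Divergence}, and footnote \ref{footnote:H} observes invariance under affine shifts of $F$. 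I expect the main obstacle to be Step 1 — rigorously justifying the integration of $\frac{d}{dt}\h(\wprod(t)) = -\A^T r(t)$ across the exceptional times where $\sigmainn'$ vanishes (only finitely many, by regularity) and controlling the behavior of $\h \circ \wprod$ near $\partial\wDomainRPOneD$; the convexity/optimality argument in Steps 2–3 is comparatively routine, modulo care that the feasible set is genuinely affine and nonempty (the latter from the existence of the limit $\wT$ with $\mathcal L(\wT)=0$).
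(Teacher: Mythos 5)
Your proposal is essentially correct and follows the same overall architecture as the paper's proof: derive the mirror-flow identity $\tfrac{d}{dt}\,\h(\wprod(t)) = -\A^T r(t)$, integrate across the finitely many exceptional times (justified by regularity of $\w$) to get $\h(\wprodT) - \h(\wprod_0) \in \ker(\A)^\perp$, then convert this orthogonality into a variational characterization, and finally observe the Bregman divergence $D_F(\cdot,\wprod_0)$ differs from the stated objective by a $\widetilde\z$-independent constant.

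The one place where you take a genuinely different (and somewhat lighter) route is the second step. The paper proves a technical lemma (their Lemma \ref{theorem:vector_general}) that writes $\overline{\wDomainRP}$ as a polytope $\{\zprod : \B\zprod - \d \le \0\}$ and verifies the full KKT system with both equality multipliers $\bm\lambda$ and inequality multipliers $\bm\mu$, choosing $\bm\mu = \0$; for the case $\wprodT \in \wDomainRP$ with possibly infinite $\h,\H$ on the boundary, they first shrink the domain to a smaller open interval $\wDomainCircOneD$ containing $\wprodT$ and then propagate optimality to the full feasible set by a separate convexity argument. You instead invoke the first-order condition for a convex function over an affine set directly and treat $\overline{\wDomainRP}$ as a domain restriction rather than as inequality constraints. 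This is cleaner, and it is correct: once one knows $\nabla G(\wprodT) = \h(\wprodT) - \h(\wprod_0) \perp \ker(\A)$ and $\wprodT \in \overline{\wDomainRP}$ is feasible, the subgradient inequality $G(\widetilde\z) \ge G(\wprodT) + \langle \nabla G(\wprodT), \widetilde\z - \wprodT\rangle = G(\wprodT)$ for every feasible $\widetilde\z$ (using $\widetilde\z - \wprodT \in \ker(\A)$) settles global optimality without a Lagrangian. The only caveat, which you flag but do not fully carry out, is that this argument requires $\h(\wprodT)$ to be a valid gradient/subgradient, which the theorem's two alternative hypotheses are designed to guarantee — in the paper this is exactly why the $\wDomainCircOneD$ detour exists for case (ii). Your proof would be complete once that case split is spelled out with the same care as the rest; as written, the gap is cosmetic rather than structural.
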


The proof of Theorem \ref{theorem:vector_IRERM} is provided in Section \ref{sec:Proofvector_IRERM}. The specific shape of $\sigmainn$ assumed in \ref{as:A2} is crucial for allowing a concise representation of $H''= h'$.
Let us highlight some more points before moving on:
\begin{enumerate}[label=(\roman*)]
    %\item {\bf Dependence on data $(\A,\y)$}: 
    %Note that Theorem \ref{theorem:vector_IRERM} makes an implicit assumption on $\A$ and $\y$.
    %Indeed, in light of \ref{as:A3} the assumption that there exists $\revFinal{\wT}$ with $\L (\revFinal{\wT}) =0$ implicitly implies existence of a solution $\z$ of $\A \sigmainn(\z) = \sigmaout^{-1}(\y)$.

    \item {\bf Dependence on initialization $\wprod_0$}: 
    The implicit regularization in \eqref{eq:implicit_regularization_Bregman} not only
    depends on the choice of $\sigmainn$ but also on the initialization $\wprod_0$. Theorems \ref{thm:PolynomialRegularizationIntroduction} and \ref{thm:TrigonometricRegularizationIntroduction} assumed that $\wprod_0$ is close to the origin.
    Note, however, that other initialization regimes can be considered, cf.\ \cite{woodworth2020kernel}. 
    \item {\bf Strictly convex loss functions:}
    If $\mathcal L$ is strictly convex, one can remove the assumption that $\mathcal L(\revFinal{\wT}) = 0$ and replace \eqref{eq:implicit_regularization_Bregman} by
    \begin{align*}
        \wprodT \in \argmin_{\widetilde\z\in\overline{\wDomainRP}, \LossM(\widetilde\z) = \LossM(\wprodT) %
        } \langle \bo, \H(\widetilde\z) - \widetilde\z\odot\h(\wprod_0) \rangle,
    \end{align*}
    i.e., Theorem \ref{theorem:vector_IRERM} also applies to settings in which the loss function is not globally minimized.
    
    \item {\bf More general reparametrizations:} To cover the more general setting described in Remark \ref{rem:IRERM}, in which $\sigmainn \colon \R^N \to \R^N$ may vary on different entries of input vectors, one only has to replace $\wDomainRP$ in Theorem \ref{theorem:vector_IRERM} with $\wDomainRP = (\wDomainRPOneD)_1 \times \cdots \times (\wDomainRPOneD)_N$, for $ (\wDomainRPOneD)_1, \dots, (\wDomainRPOneD)_N \subset \R$, require that Assumption \ref{as:A2} holds for each marginal of $\sigmainn$, and define $h \colon \wDomainRP \to \R^N$ accordingly. 
    However, to keep the presentation concise, we will restrict our presentation to the scenario where all marginals are the same.
\end{enumerate}

Whereas $\sigmainn$ heavily influences the characterization of $\wprodinfty$ in \eqref{eq:implicit_regularization_Bregman} via $F$, the specific choices of $\sigmaout$ and $L$ play no role, i.e., the implicit bias of gradient flow on \eqref{eq:IRERM} is solely determined by $\sigmainn$ and $\w_0$.
The following result however shows that $\sigmaout$ and $L$ strongly influence the convergence behavior of gradient flow.
In particular, \revFinal{if $\v \mapsto L(\sigmaout(\v),\y)$ satisfies} the so-called \emph{Polyak-Lojasiewicz inequality} we derive a linear rate.

\begin{definition}\label{def:polyak}
    \cite[Condition C]{polyak1963gradient}
    A differentiable function $f \colon \R^\N \to \R$ with minimum $f_\star$ satisfies the  \emph{Polyak-Lojasiewicz inequality} with $\mu > 0$ if, for any $\z \in \R^\N$, it holds that
    \begin{align*}
        \| \nabla f(\z) \|_2^2
        \ge 2\mu (f(\z) - f_\star).
    \end{align*}
\end{definition}

\begin{remark} \label{rem:PL}
    Let us recall two useful facts: 
    \begin{enumerate}
        \item[(i)] Any $\mu$-strongly convex function $f$, i.e., for any $\u,\v \in \R^\N$,
        \begin{align*}
            f(\v) \ge f(\u) + \langle \nabla f(\u), \v-\u \rangle + \frac{\mu}{2} \| \u - \v \|_2^2,
        \end{align*}
        satisfies the Polyak-Lojasiewicz inequality with $\mu$.
        \item[(ii)] If $f(\z) = g(\A\z)$, for some matrix $\A~\in~\R^{\M \times \N}$ and $g \colon \R^\M \to \R$ satisfying the Polyak-Lojasiewicz inequality with $\mu$, then $f$ satisfies the Polyak-Lojasiewicz inequality with $(\sigma^2_{\min}(\A)\;\mu)$, 
        where $\sigma_{\min} \left(\A\right)$ denotes the smallest non-zero singular value of $\A$. \revFinal{This includes linear regression as a special case.}
    \end{enumerate}
\end{remark}

\begin{theorem}[Convergence] 
	\label{thm:Convergence}
	 Let $\w: [0,T) \to \mathbb{R}^N$ with $T \in \R_{>0} \cup \{+\infty\}$ be any solution to the differential equation \eqref{eq:gd_IRERM} in Definition \ref{definition:IRERM}, for $\A\in\mathbb{R}^{\M\times\N}$, $\y\in\mathbb{R}^{\M}$, $\sigmainn \colon \R \to \R$, $\sigmaout \colon \R \to \R$, and $L \colon \R^\M \times \R^\M \to \R$.
	Define the reparametrized flow $\wprod$ and loss function $\LossM$ via
	\begin{align*}
		\wprod (t) := \sigmainn(\w (t))
		\quad
		\text{and} \quad \LossM(\widetilde{\z}) = L\big( \sigmaout(\A \widetilde{\z}), \y \big)
	\end{align*}
	for all $\widetilde{\z} \in \mathbb{R}^{N}$. 
 \revFinal{Let Assumptions \ref{as:A1} -- \ref{as:B1}} hold, and 
 assume that $\w$ is regular.
 
 Recall from Theorem \ref{theorem:vector_IRERM} the function $h$ defined in \eqref{eq:hprimedefinitionWithoutPositivity} and the Bregman divergence $D_F$.
	Then, the following claims hold:
	
	\begin{enumerate}[label=(\roman*)]
		\item \underline{General convergence rate:} %
			For all $t \in [0,T)$, it holds that 
			\begin{equation*}
				\LossM(\wprod(\t)) 
				\le 
				\frac{D_F \left( \zprod_0 , \wprod_0  \right)}
				{t},
			\end{equation*}
			where $\zprod_0$
            \revFinal{is chosen as in}
            in Assumption \ref{as:B2} and $D_F \left( \zprod_0 , \wprod_0  \right) < \infty$. If $T = \infty$, this implies that $\lim_{t \to \infty} \LossM(\wprod(\t)) = 0$.
			\label{thm:Convergence_i}
			\item \underline{Linear convergence rate:} If, in addition, \revFinal{$\v \mapsto L(\sigmaout(\v),\y)$} satisfies the Polyak-Lojasiewicz inequality for $\mu>0$ and if there exists $r > 0$ such that $(\h')^{\odot -1}(\wprod(\t)) \ge r$, for \revFinal{every $\t \in [\t_0,\t_1)$ where $\t_0,\t_1 \in [0,T) \cup \{\infty\}$ satisfy that $[\t_0,\t_1) \cap T_\circ = \emptyset$ and $T_\circ$ is defined as in Remark \ref{rem:Assumptions},} 
            then
			\begin{align*}
				\LossM(\wprod(\t)) \le \revFinal{\LossM(\wprod(\t_0)) e^{-2r\mu (\t-\t_0)}}.
			\end{align*}
            \label{thm:Convergence_ii}
			\item \underline{Extending a bounded trajectory:} 
            If $\underset{t \in [0,T)}{\sup} \| \wprod(\t) \|_2  < + \infty$, then $\wprodT := \lim_{t \to T} \wprod(t)$ exists. Moreover, either $\wprodT \in \partial \wDomainRP$ and the trajectory is maximal
            or there exists $\varepsilon > 0$ such that $\w$ can be extended to a solution 
            $\w \colon [0,T+\varepsilon) \to \R^N$ of \eqref{eq:gd_IRERM}.
			\label{thm:Convergence_iii}
            
			\item \underline{Convergence of bounded trajectory:}
			If $T = \infty$ and $\underset{t \ge 0}{\sup} \| \wprod(\t) \|_2  < + \infty$,  
			then
			\begin{align}
				\label{eq:LimExists}                
				\wprodinfty :=  \underset{t \rightarrow + \infty}{\lim} \wprod(\t)   \in \overline{\wDomainRP}
				\qquad \text{exists with} \qquad \LossM(\wprodinfty) = 0.                
			\end{align}
		    \label{thm:Convergence_iv}
     \end{enumerate}
     If $\wDomainRPOneD$ is unbounded, which means that boundedness of the trajectory $\wprod(t)$ does not trivially hold, we furthermore have the following:
     \begin{enumerate}[resume,label=(\roman*)]
		    \item \underline{Boundedness of trajectory:}			
                If the Bregman balls $B_{D_F,r}(\x) = \{ \z \in \wDomainRP \colon D_F(\x,\z) \le r \}$ are bounded 
                with respect to the Euclidean norm $\Vert \cdot \Vert_2 $, 
                for any $\x \in \wDomainRP$ and $r \ge 0$, then $$\underset{t \in [0,T)}{\sup} \| \wprod(\t) \|_2  < + \infty.$$           
			In particular, the Bregman balls $B_{D_F,r}(\x)$ are bounded if the univariate function $\widetilde z \mapsto \widetilde z \cdot h'(\widetilde z)$ is in the function class
			\begin{align}
				\label{eq:F_D}
				\mathcal F_\wDomainRPOneD = \Big\{ f \in L_1^c(\wDomainRPOneD) \colon \Big| \int_{z}^b f(x) dx \Big| = \infty, \text{ for any } z \in \wDomainRPOneD  \text{ and } b \in \wDomainRPlim \Big\},
			\end{align}
			where $\wDomainRPlim := \{ \inf \wDomainRPOneD, \sup \wDomainRPOneD \} \cap \{-\infty, \infty\}$.
			\label{thm:Convergence_v}
		\end{enumerate}
	\end{theorem}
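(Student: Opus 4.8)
\textbf{Proof plan for Theorem \ref{thm:Convergence}.}

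The plan is to treat the five claims through a common thread: the energy identity for the reparametrized flow. The starting point is that, by \eqref{equ:intern33} with $\sigmaout = \mathrm{Id}$, the original flow satisfies $\w'(t) = -[\A^T \nabla_\v L(\v,\y)|_{\v = \A\wprod(t)}] \odot \sigmainn'(\w(t))$, so the reparametrized flow obeys $\wprod'(t) = \sigmainn'(\w(t)) \odot \w'(t) = -[\sigmainn'(\w(t))]^{\odot 2} \odot \nabla \LossM(\wprod(t))$, recalling that $[\sigmainn'(\w)]^2 = 1/h'(\wprod)$ since $h' = ([\sigmainn^{-1}]')^2$ and $[\sigmainn^{-1}]'(\sigmainn(x)) = 1/\sigmainn'(x)$ (valid on the regular pieces, which is why regularity of $\w$ is needed). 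Thus $\wprod'(t) = -(h')^{\odot -1}(\wprod(t)) \odot \nabla \LossM(\wprod(t))$, i.e.\ $\wprod$ is a (time-reparametrized) mirror flow with mirror map $F(\widetilde\z) = \langle \bo, \H(\widetilde\z)\rangle$, whose Hessian is $\diag(h'(\widetilde\z))$. From here I differentiate $t \mapsto D_F(\zprod_0,\wprod(t))$: a standard computation gives $\frac{d}{dt} D_F(\zprod_0,\wprod(t)) = -\langle \nabla^2 F(\wprod(t))\,\wprod'(t), \zprod_0 - \wprod(t)\rangle = \langle \nabla \LossM(\wprod(t)), \zprod_0 - \wprod(t)\rangle \le \LossM(\zprod_0) - \LossM(\wprod(t)) = -\LossM(\wprod(t))$ by convexity of $\LossM$ and $\LossM(\zprod_0)=0$ (here is where \ref{as:B2} enters). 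Claim \ref{thm:Convergence_i} then follows: integrating, $\int_0^t \LossM(\wprod(s))\,ds \le D_F(\zprod_0,\wprod_0) < \infty$, and since $t \mapsto \LossM(\wprod(t))$ is nonincreasing (because $\frac{d}{dt}\LossM(\wprod(t)) = -\langle \nabla\LossM, (h')^{\odot-1}(\wprod)\odot\nabla\LossM\rangle \le 0$), we get $\LossM(\wprod(t)) \le D_F(\zprod_0,\wprod_0)/t$. The finiteness $D_F(\zprod_0,\wprod_0)<\infty$ needs a short separate argument from \ref{as:A2} (local integrability of $h'$ forces $H$ to be finite on the relevant segment) — I should double-check the edge case where $\zprod_0 \in \partial\wDomainRP$.

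For \ref{thm:Convergence_ii}, I combine the dissipation identity $\frac{d}{dt}\LossM(\wprod(t)) = -\langle \nabla\LossM(\wprod(t)), (h')^{\odot-1}(\wprod(t))\odot \nabla\LossM(\wprod(t))\rangle \le -r\|\nabla\LossM(\wprod(t))\|_2^2$ (using $(h')^{\odot-1}(\wprod(t)) \ge r$ entrywise) with the Polyak-Lojasiewicz inequality for $\LossM$: by Remark \ref{rem:PL}(ii), since $L(\cdot,\y)$ is PL with $\mu$, $\LossM = L(\A\,\cdot\,,\y)$ is PL with $\sigma^2_{\min}(\A)\mu$, so $\|\nabla\LossM(\wprod(t))\|_2^2 \ge 2\mu\sigma^2_{\min}(\A)\LossM(\wprod(t))$. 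Hence $\frac{d}{dt}\LossM(\wprod(t)) \le -2r\mu\sigma^2_{\min}(\A)\LossM(\wprod(t))$ and Grönwall gives the stated exponential decay.

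For \ref{thm:Convergence_iii} and \ref{thm:Convergence_iv}: assuming $\sup_{t<T}\|\wprod(t)\|_2 < \infty$, I want to show the limit $\wprodT$ exists. The key estimate is $\int_0^T \|\wprod'(s)\|_2\,ds < \infty$, i.e.\ finite path length; this I get by noting $\|\wprod'(s)\|_2 \le C \cdot (-\frac{d}{dt}\LossM(\wprod(s)))^{1/2}$ combined with Cauchy–Schwarz in time over the finite interval $[0,T)$ when $T<\infty$, and for $T=\infty$ I instead use that $\int_0^\infty \LossM(\wprod(s))\,ds < \infty$ together with the dissipation rate to bound the length — actually the cleanest route is: on the bounded set $\{\|\widetilde\z\|_2 \le M\}\cap\wDomainRP$, $h'$ is bounded above (it's continuous, but the set need not be compact if it touches $\partial\wDomainRP$; here I use local integrability again, or I argue the trajectory stays in a compact subset), so $\|\wprod'\|_2^2 \le C\|\nabla\LossM\|_2^2$ and $\int_0^T\|\wprod'\|_2^2\,ds \le C\int_0^T \|\nabla\LossM(\wprod)\|_2^2\,ds \le C'(\LossM(\wprod_0)-\lim\LossM(\wprod)) < \infty$; finite-length then follows on finite $T$, and for $T=\infty$ one needs an extra monotonicity/Lojasiewicz-type argument — but in fact for $T=\infty$ we can invoke that $\wprod$ is a mirror flow with convex objective, so $\LossM(\wprod(t)) \to 0$ from \ref{thm:Convergence_i}, and then convergence of $\wprod(t)$ to a minimizer of $\LossM$ over $\overline{\wDomainRP}$ follows from the monotone decrease of $D_F(\widetilde\z^\star,\wprod(t))$ for any minimizer $\widetilde\z^\star$ plus boundedness (a Opial-type argument). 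For the dichotomy in \ref{thm:Convergence_iii}, if $\wprodT \in \interior\wDomainRP$ then the right-hand side of \eqref{eq:gd_IRERM} is continuous near $\w_T = \sigmainn^{-1}(\wprodT)$, so Peano's theorem extends the solution; if $\wprodT \in \partial\wDomainRP$ the flow is maximal since $\wprod$ cannot leave $\wDomainRP$. For \ref{thm:Convergence_v}, boundedness of the Bregman balls $B_{D_F,r}(\x)$ together with the monotonicity $D_F(\zprod_0,\wprod(t)) \le D_F(\zprod_0,\wprod_0)$ from the first paragraph immediately confines $\wprod(t)$ to a bounded set; and the sufficient condition via the function class $\mathcal F_\wDomainRPOneD$ follows because $\widetilde z \mapsto \widetilde z \, h'(\widetilde z) \in \mathcal F_\wDomainRPOneD$ forces the one-dimensional Bregman divergence $D_{H_i}(x_i,z_i) = H_i(x_i) - H_i(z_i) - h_i(z_i)(x_i - z_i)$ to blow up as $z_i \to \partial\wDomainRPOneD$ (integrate by parts: $H(b)-H(z)-h(z)(b-z) = \int_z^b \int_z^u h'(v)\,dv\,du$ and the inner structure forces divergence), so sublevel sets of $D_F$ stay away from the boundary and are bounded.

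The main obstacle I anticipate is the careful handling of the boundary $\partial\wDomainRP$ throughout — in particular, ensuring that $D_F(\zprod_0,\wprod_0)<\infty$ and that the trajectory never reaches a point where $h'$ fails to be locally integrable, reconciling the two alternative hypotheses in Theorem \ref{theorem:vector_IRERM} (whether $\wprodT \in \wDomainRP$ or the continuous extensions of $h,H$ are finite) with the convergence statements here. A secondary subtlety is the regularity hypothesis on $\w$: the identity $[\sigmainn'(\w(t))]^2 = (h')^{\odot-1}(\wprod(t))$ and the whole mirror-flow reformulation only hold away from the finitely many times where some $\sigmainn'(\w(t)_i) = 0$, so I need to argue the energy identities extend across those isolated times by continuity of all quantities involved, which is where "regular" (finitely many such times) rather than merely "$\sigmainn' \ne 0$" does its work.
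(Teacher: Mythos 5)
Your proposal follows essentially the same route as the paper: derive the Bregman-decay inequality $\frac{d}{dt} D_F(\zprod_0,\wprod(t)) \le -\LossM(\wprod(t))$ via the mirror-flow reformulation on the regular pieces (the paper's decay lemma), integrate and use monotonicity of $\LossM(\wprod(\cdot))$ for (i), combine the dissipation identity with the PL inequality and Gr\"onwall for (ii), use Peano's theorem for the extension dichotomy in (iii), an Opial-type accumulation-point argument with the monotone Bregman divergence for (iv), and the Bregman-ball boundedness criterion reduced to the one-dimensional integral condition for (v). Your regularity remark --- that the mirror-flow identity holds off a finite set of times and the energy estimates propagate by continuity across them --- also matches the paper's handling.

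The one real slip is in your sketch of (iii): you claim that $h'$ is bounded above on bounded subsets of $\wDomainRP$ and deduce $\|\wprod'\|_2 \lesssim \|\nabla\LossM\|_2$ from it. In fact $h'$ typically diverges at $\partial\wDomainRPOneD$ (for $\sigmainn=\tanh$, $h'(\widetilde z)=(1-\widetilde z^2)^{-2}$). What the bound on $\wprod'$ actually requires, and what is true, is that the \emph{reciprocal} $(h')^{\odot -1}(\wprod(t))=\sigmainn'(\w(t))^{\odot 2}$ is bounded along the trajectory: since $\sigmainn'$ is continuous and tends to zero precisely where $\sigmainn$ has a finite range limit, $(h')^{\odot -1}$ remains bounded whether or not $\wprod(t)$ approaches $\partial\wDomainRP$. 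With $(h')^{\odot -1}$ bounded and $g$ continuous on the bounded trajectory, the right-hand side of \eqref{eq:without_sigmaout} is bounded, so $\wprod'$ is bounded and $\wprodT$ exists by direct integration over the (finite) interval --- the paper's argument, which sidesteps your $L^2$/path-length detour. For $T=\infty$ in (iv), the Opial-type argument you fall back on is exactly what the paper uses, so no finite-length estimate is needed there.
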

\revFinal{The proof of Theorem \ref{thm:Convergence} is provided in Section \ref{sec:ProofConvergence}.
The result sheds some light on the influence that the different components of our model in \eqref{eq:IRERM} have on the expected convergence behaviour of gradient flow. 
First, the objective value of $\LossM$ decays along the trajectory with rate $\mathcal{O}(\tfrac{1}{\t})$ independent of whether $\wprod$ converges or not. Second, if the composition $\v \mapsto L(\sigmaout(\v),\y)$ satisfies the Polyak-Lojasiewicz condition, this rate is improved to $\mathcal{O}(e^{-c\t})$ on non-critical time intervals. 
Third, the choice of $\sigmainn$ influences the convergence in several ways: it affects the quantities $D_F \left( \zprod_0 , \wprod_0  \right)$ and $r$ in Theorem \ref{thm:Convergence} \ref{thm:Convergence_i} and \ref{thm:Convergence_ii} via $F$ and $h$, and it determines whether $[z \mapsto \widetilde z \cdot h'(\widetilde z)] \in \mathcal F_\wDomainRPOneD$ and thus whether one can expect boundedness/convergence of the trajectory $\wprod$. Whereas an analog to Claim \ref{thm:Convergence_i} for mirror descent with spectral hypentropy mirror map previously appeared in \cite{wu2021implicit} and analogs to Claims \ref{thm:Convergence_i} and \ref{thm:Convergence_ii} were observed for mirror flow on strictly convex objectives in \cite{tzen2023variational}, the sufficient condition \eqref{eq:F_D} for boundedness of the trajectory $\wprod$ is novel to the best of our knowledge.
}

\revFinal{
\begin{remark}
    \begin{enumerate}
        \item[(i)] Note that $\sigmaout = \mathrm{Id}$ satisfies Assumptions \ref{as:A1} and \ref{as:B1}, i.e., Theorem \ref{thm:Convergence} applies to reparametrized linear regression, cf.\ Remark \ref{rem:PL} (ii).
        \item[(ii)] The linear convergence rate in Theorem \ref{thm:Convergence} \ref{thm:Convergence_ii} only applies to time intervals that are separated from the set of critical times $T_\circ$. Indeed, since $(\h')^{\odot -1}(\wprod(\t))$ vanishes for $\t \in T_\circ$, we could not derive strong convergence guarantees close to $T_\circ$. This is consistent with the fact that at such points the dynamics are non-unique and may become stationary.
    \end{enumerate}
\end{remark}
}

\subsection{How to induce specific regularization}
\label{sec:Example}

With Theorem \ref{theorem:vector_IRERM} and Theorem \ref{thm:Convergence} in hand, we now discuss how the specific instances of implicit regularization in Theorem \ref{thm:PolynomialRegularizationIntroduction} and Theorem \ref{thm:TrigonometricRegularizationIntroduction} can be derived from them. We restrict ourselves to the case that $\wprod_0 = \alpha \bo$, for $\alpha > 0$ being small, a common setting studied in many related works, e.g. \cite{arora2018optimization,azulay2021implicit,chou2021more,even2023s}
Let us begin with the setting of Theorem \ref{thm:PolynomialRegularizationIntroduction} and apply Theorem \ref{theorem:vector_IRERM}.

\revFinal{
We consider polynomial and (hyperbolic) trigonometric reparametrizations as representative examples illustrating different types of induced regularization. The polynomial family provides a continuum of behaviors that interpolate between distinct regimes, while remaining analytically tractable. In contrast, trigonometric reparametrizations lead to qualitatively different regularization structures and illustrate how non-polynomial nonlinearities can be incorporated within the same framework \revFinal{by using our techniques}.}

\begin{corollary}[Polynomial Regularization]
\label{cor:PolynomialRegularization}
    Let $\sigmainn(z) = \sign(z) |z|^{\frac{2}{p}}$, for $p \in (1,2)$, and let $\w: [0,T) \to \mathbb{R}^N$ with $T \in \R_{>0} \cup \{+\infty\}$ be any 
    regular
    solution to the differential equation \eqref{eq:gd_IRERM} in Definition \ref{definition:IRERM}, for $\A\in\mathbb{R}^{\M\times\N}$, $\y\in\mathbb{R}^{\M}$ with $\sigmaout \colon \R \to \R$ and $L \colon \R^\M \times \R^\M \to \R$
    satisfying \revFinal{Assumptions \ref{as:A1}, \ref{as:A3}, and \ref{as:B2}}.     
    Define $\wprod_0 = \sigmainn(\w_0) = \alpha \bo$, for $\alpha > 0$ and 
    set 
    \begin{equation*}
        \mu 
        :=
        \min_{\widetilde\z\in \R^\N, \LossM(\widetilde\z) = 0} \|\widetilde\z\|_{\p}.
    \end{equation*}
    If $\wprod_{T,\alpha} := \lim_{\t \to T} \sigmainn(\w(\t))$ with $\LossM(\wprod_{T,\alpha}) = 0$ exists and
    \begin{equation*}
        \alpha
        \le \frac{1}{2^{1/(p-1)} N^{1/p}} \mu,
    \end{equation*}
    then it holds that
    \begin{equation}
    \label{eq:PolynomialRegularizationLpmin}         
        \| \wprod_{T,\alpha} \|_p
        \le
        \left(1+ 4 N^{1-1/p} \left( \frac{ \alpha }{ \mu }\right)^{p-1}    \right) \cdot \mu.         
    \end{equation}
    
\end{corollary}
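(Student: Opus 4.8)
The plan is to apply Theorem~\ref{theorem:vector_IRERM} to the explicit reparametrization $\sigmainn(z)=\sign(z)|z|^{2/p}$ and then turn the resulting Bregman-divergence characterization of $\wprod_{T,\alpha}$ into the quantitative bound \eqref{eq:PolynomialRegularizationLpmin} by comparing $\wprod_{T,\alpha}$ with a feasible point of minimal $\ell_p$-norm. \textbf{Step 1 (the potential and the hypotheses).} Since $p\in(1,2)$ we have $2/p>1$, so $\sigmainn'(z)=\tfrac{2}{p}|z|^{2/p-1}$ extends continuously by $\sigmainn'(0)=0$ and vanishes only at the single point $x_1=0$, while $\sigmainn^{-1}(\widetilde z)=\sign(\widetilde z)|\widetilde z|^{p/2}$ yields $h'(\widetilde z)=\big([\sigmainn^{-1}]'(\widetilde z)\big)^2=\tfrac{p^2}{4}|\widetilde z|^{p-2}$, which is locally integrable on $\wDomainRPOneD=\R$ because $p-2\in(-1,0)$; hence Assumption~\ref{as:A2} holds, and together with the assumed Assumptions~\ref{as:A1}, \ref{as:A3}, regularity of $\w$, and the observation that (as $\sigmainn$ is a homeomorphism of $\R$) the hypothesis $\lim_{t\to T}\wprod(t)=\wprod_{T,\alpha}$ with $\LossM(\wprod_{T,\alpha})=0$ transfers to $\w$, Theorem~\ref{theorem:vector_IRERM} applies. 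Integrating $h'$ twice gives $h(\widetilde z)=\tfrac{p^2}{4(p-1)}\sign(\widetilde z)|\widetilde z|^{p-1}$, $H(\widetilde z)=\tfrac{p}{4(p-1)}|\widetilde z|^p$, hence $F(\widetilde z)=\langle\bo,H(\widetilde z)\rangle=\tfrac{p}{4(p-1)}\|\widetilde z\|_p^p$, which is strictly convex on $\R^N$; since $\wDomainRP=\R^N$ the first bullet of Theorem~\ref{theorem:vector_IRERM} is automatic and we obtain $\wprod_{T,\alpha}\in\argmin\{D_F(\widetilde\z,\wprod_0):\LossM(\widetilde\z)=0\}$.

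\textbf{Step 2 (comparison with the $\ell_p$-minimizer).} With $\wprod_0=\alpha\bo$ and $\alpha>0$ one computes $\nabla F(\wprod_0)=\tfrac{p^2}{4(p-1)}\alpha^{p-1}\bo$, hence
\begin{equation*}
    D_F(\widetilde\z,\wprod_0)=\tfrac{p}{4(p-1)}\Big(\|\widetilde\z\|_p^p-p\alpha^{p-1}\langle\bo,\widetilde\z\rangle+(p-1)N\alpha^p\Big).
\end{equation*}
Let $\widetilde\z_\star$ attain $\mu$; this is possible because the feasible set equals the affine subspace $\{\widetilde\z:\A\widetilde\z=\sigmaout^{-1}(\y)\}$ (using \ref{as:A1}, \ref{as:A3}), which is nonempty as it contains $\wprod_{T,\alpha}$, and $\|\cdot\|_p$ is coercive. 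Then $D_F(\wprod_{T,\alpha},\wprod_0)\le D_F(\widetilde\z_\star,\wprod_0)$, and after cancelling the constant $(p-1)N\alpha^p$-term and estimating $|\langle\bo,\v\rangle|\le\|\v\|_1\le N^{1-1/p}\|\v\|_p$ for $\v\in\{\wprod_{T,\alpha},\widetilde\z_\star\}$ this reduces to
\begin{equation*}
    \|\wprod_{T,\alpha}\|_p^p\le\mu^p+p\alpha^{p-1}N^{1-1/p}\big(\|\wprod_{T,\alpha}\|_p+\mu\big).
\end{equation*}

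\textbf{Step 3 (the scalar inequality).} Since $\wprod_{T,\alpha}$ is feasible we have $\|\wprod_{T,\alpha}\|_p\ge\mu$, so write $\|\wprod_{T,\alpha}\|_p=\mu(1+t)$ with $t\ge0$. Dividing the last display by $\mu^p$ gives $(1+t)^p\le 1+ps(2+t)$ with $s:=N^{1-1/p}(\alpha/\mu)^{p-1}$, and the standing hypothesis $\alpha\le(2^{1/(p-1)}N^{1/p})^{-1}\mu$ is exactly $s\le\tfrac12$. Applying Bernoulli's inequality $(1+t)^p\ge 1+pt$ yields $t(1-s)\le 2s$, so $t\le 2s/(1-s)\le 4s$, and therefore $\|\wprod_{T,\alpha}\|_p=\mu(1+t)\le\big(1+4N^{1-1/p}(\alpha/\mu)^{p-1}\big)\mu$, which is \eqref{eq:PolynomialRegularizationLpmin}.

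The genuinely delicate part is Step~1: one must check Assumption~\ref{as:A2} even though $\sigmainn'$ degenerates at $0$ --- which is precisely why the corollary posits a \emph{regular} flow --- and read off the closed form $F=\tfrac{p}{4(p-1)}\|\cdot\|_p^p$ from the indirect prescription $h'=([\sigmainn^{-1}]')^2$. After that everything is elementary, the only subtlety being that the Bregman divergence of the $\ell_p^p$-potential controls $\|\cdot\|_p^p$ only up to a linear correction, so that smallness of $\alpha$ (i.e.\ $s\le\tfrac12$) together with Bernoulli's inequality is needed to close the estimate.
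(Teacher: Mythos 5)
Your proof is correct and follows essentially the same route as the paper: verify Assumption~\ref{as:A2}, read off $H(\widetilde z)=\tfrac{p}{4(p-1)}|\widetilde z|^p$, invoke Theorem~\ref{theorem:vector_IRERM}, compare $\wprod_{T,\alpha}$ with the $\ell_p$-minimizer $\widetilde\z_\star$ in the resulting variational characterization, bound the cross term via $\|\cdot\|_1\le N^{1-1/p}\|\cdot\|_p$, and close with a one-dimensional convexity argument using the smallness of $\alpha$. Your Step~3 (normalize by $\mu$, set $\|\wprod_{T,\alpha}\|_p=\mu(1+t)$, apply Bernoulli) is algebraically the same estimate the paper obtains from $x^p-y^p\ge p\,y^{p-1}(x-y)$ together with $\tfrac{1}{1-x}\le 1+2x$, so the two presentations are interchangeable.
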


For the sake of conciseness, we defer the proof of Corollary \ref{cor:PolynomialRegularization} to Section \ref{sec:Non-asymptoticBounds} and only provide a sketch of the argument here. \revFinal{The assumption that $\widetilde\w_0$ is aligned with the vector of ones is made for convenience, as it ensures that all coordinates evolve \revFinal{identically} and reduces the analysis \revFinal{of the Bregman divergence} to a simplified expression. 
%More generally, similar behavior can be expected for initializations that preserve this symmetry structure, while arbitrary initializations may lead to more complex coordinate-wise dynamics that are not captured by the present analysis.
}

\begin{proof}[Proof sketch:]
    To use Theorem \ref{theorem:vector_IRERM}, we just have to verify Assumption \ref{as:A2} for $\wDomainRP = \R^\N$ since $\wprod_{T,\alpha} \in \wDomainRP$ by existence of the limit, i.e., the additional conditions of Theorem \ref{theorem:vector_IRERM} are automatically satisfied. For $p \in (1,2)$, the function $\sigmainn$ is continuously differentiable and invertible, and the derivative $\sigmainn'(z) = \frac{2}{p} |z|^{\frac{2-p}{p}}$ vanishes only for $z = 0$.
    Consequently, the function
    \begin{align}
    \label{eq:hPrime}
    	[\sigmainn^{-1}]'(\widetilde{z})^2 = \frac{p^2}{4} |\widetilde{z}|^{p-2},
    \end{align}
    is defined on $\R^\N \setminus \{ \boldsymbol{0} \}$ and is in $L_1^c(\R^\N)$.
    Now applying Theorem \ref{theorem:vector_IRERM} yields that 
    \begin{align}\label{eq:implicit_regularization}
        \wprod_{T, \alpha} \in \argmin_{\widetilde\z\in\overline{\wDomainRP}, \LossM(\widetilde\z) = 0} \langle \bo, \H(\widetilde\z) - \widetilde\z\odot\h(\wprod_0) \rangle, 
    \end{align}
    where $\H$ and $\h$ have been defined in the theorem statement.
    One computes that
    \begin{align}
    \label{eq:fw0_Polynomial}
        \langle \bo, \H(\widetilde\z) - \widetilde\z\odot\h(\wprod_0) \rangle 
        = 
            \frac{p}{4(p-1)} \big( \langle \bo, |\widetilde \z|^{\odot p} \rangle - p \langle \bo, \widetilde \z \odot \wprod_0^{\odot (p-1)} \rangle \big).
    \end{align}
    Let us now \emph{informally} take the limit $\alpha \to 0$ for $\wprod_0 = \alpha \bo$. 
    Since $\p>1$, the factor $\frac{\p}{\p-1}$ is positive and the term $\langle \bo, |\widetilde \z|^{\odot p} \rangle$ dominates. Hence, \eqref{eq:implicit_regularization} becomes equivalent to minimizing the $\ell_p$-norm over all global minimizers of $\LossM$.
\end{proof}

By setting $p = \frac{2}{L}$, Corollary \ref{cor:PolynomialRegularization} \revFinal{complements} the results of \cite{chou2021more} from $L \ge 2$ to $L \in (1,2)$, cf.\ Equation \eqref{eq:Lover_simplified} above.
To obtain Theorem \ref{thm:PolynomialRegularizationIntroduction} in full, we still need to apply Theorem \ref{thm:Convergence} to $\sigmainn(z) = \sign(z) |z|^{\frac{2}{p}}$.

\begin{corollary}[Polynomial Regularization --- Convergence]
\label{cor:PolynomialRegularizationConvergenceRate}
    Let $\sigmainn(z) = \sign(z) |z|^{\frac{2}{p}}$, for $p \in (1,2)$, and let $\w: [0,T) \to \mathbb{R}^N$ with $T \in \R_{>0} \cup \{+\infty\}$ be any 
    regular
    solution to the differential equation \eqref{eq:gd_IRERM} in Definition \ref{definition:IRERM}, \revFinal{for $\A\in\mathbb{R}^{\M\times\N}$, $\y\in\mathbb{R}^{\M}$, $\sigmaout \colon \R \to \R$, and $L \colon \R^\M \times \R^\M \to \R$ satisfying Assumptions \ref{as:A1}, \ref{as:A3}, \ref{as:B2}, and \ref{as:B1}}.
    Then the following claims hold:
    \begin{itemize}
    \item If $T<+ \infty$, 
    then the limit $ \lim_{t \rightarrow T}  \w (t) $ exists and $\w$ is not a maximal solution.
    \item 
    Suppose that either $T=+\infty$, or $T<+\infty$ and $\w$ can be extended to a regular solution $\w \colon [0,\infty) \to \R^N$ (without relabeling the flow).
    Then
    $\w \colon [0,\infty) \to \R^N$ converges to  $\winfty \in \R^\N$ with  $\LossM (\winfty) = 0$ and with the rate described in Theorem \ref{thm:Convergence} \ref{thm:Convergence_i}.
    
    \end{itemize}
\end{corollary}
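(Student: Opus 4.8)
The plan is to derive everything from Theorem~\ref{thm:Convergence} by verifying its hypotheses for $\sigmainn(z)=\sign(z)|z|^{2/p}$ and then reading off the two bullet points. Assumptions \ref{as:B1}, \ref{as:A3}, \ref{as:B2} hold by hypothesis, and $\w$ is regular by hypothesis; Assumption \ref{as:A2} is checked exactly as in the proof sketch of Corollary~\ref{cor:PolynomialRegularization}: $\sigmainn$ is $C^1$ and invertible with $\sigmainn'(z)=\tfrac{2}{p}|z|^{(2-p)/p}$ vanishing only at $z=0$ (so $m=1$, $x_1=0$), and $([\sigmainn^{-1}]')^2(\widetilde z)=\tfrac{p^2}{4}|\widetilde z|^{p-2}\in L_1^c(\R)$ because $p-2>-1$. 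Hence $\wDomainRPOneD=\R$ is unbounded and the whole of Theorem~\ref{thm:Convergence}, including part \ref{thm:Convergence_v}, is applicable.

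The one real computation is the boundedness criterion \ref{thm:Convergence_v}. From $h'(\widetilde z)=\tfrac{p^2}{4}|\widetilde z|^{p-2}$ we get $\widetilde z\,h'(\widetilde z)=\tfrac{p^2}{4}\sign(\widetilde z)|\widetilde z|^{p-1}$; since $p-1\in(0,1)$ this lies in $L_1^c(\R)$, and since $p-1\ge 0$ the integral $\int_z^b \tfrac{p^2}{4}\sign(x)|x|^{p-1}\,dx$ is infinite in absolute value for every $z\in\R$ and every $b\in\wDomainRPlim=\{-\infty,\infty\}$. Thus $\widetilde z\mapsto\widetilde z\,h'(\widetilde z)\in\mathcal F_{\wDomainRPOneD}$, the Bregman balls $B_{D_F,r}(\x)$ are Euclidean-bounded, and Theorem~\ref{thm:Convergence}~\ref{thm:Convergence_v} gives $\sup_{t\in[0,T)}\|\wprod(t)\|_2<\infty$. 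Since $\sigmainn^{-1}(\widetilde z)=\sign(\widetilde z)|\widetilde z|^{p/2}$ is continuous, $\w(t)=\sigmainn^{-1}(\wprod(t))$ is bounded as well.

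With boundedness in hand the two claims follow. For the first bullet, if $T<\infty$ then Theorem~\ref{thm:Convergence}~\ref{thm:Convergence_iii} applies: either $\wprodT:=\lim_{t\to T}\wprod(t)\in\partial\wDomainRP$ and $\w$ is maximal, or $\w$ extends past $T$; since $\wDomainRP=\R^N$ has empty boundary, only the second alternative occurs, so $\w$ is not maximal, and $\lim_{t\to T}\w(t)=\sigmainn^{-1}(\wprodT)$ exists by continuity of $\sigmainn^{-1}$. For the second bullet, in either listed case we have a regular solution on $[0,\infty)$; re-applying \ref{thm:Convergence_v} gives $\sup_{t\ge 0}\|\wprod(t)\|_2<\infty$, and then Theorem~\ref{thm:Convergence}~\ref{thm:Convergence_iv} gives that $\wprodinfty:=\lim_{t\to\infty}\wprod(t)\in\R^N$ exists with $\LossM(\wprodinfty)=0$; hence $\winfty:=\sigmainn^{-1}(\wprodinfty)=\lim_{t\to\infty}\w(t)$ exists and $\mathcal L(\winfty)=\LossM(\wprodinfty)=0$. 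The asserted rate $\LossM(\wprod(t))\le D_F(\zprod_0,\wprod_0)/t$ with $D_F(\zprod_0,\wprod_0)<\infty$ is precisely Theorem~\ref{thm:Convergence}~\ref{thm:Convergence_i}.

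The only delicate points are (a) confirming $\widetilde z\,h'(\widetilde z)\in\mathcal F_{\wDomainRPOneD}$, in particular the divergence-at-infinity condition, which is exactly the feature that distinguishes the $\ell_p$ reparametrization ($p<2$) from the flat $\ell_2$-geometry and forces the trajectory to stay bounded; and (b) the bookkeeping in the second bullet, where one must re-invoke the boundedness argument on the extended interval and rely on the standing hypothesis that the extension is again regular. No new estimate beyond Theorem~\ref{thm:Convergence} is needed.
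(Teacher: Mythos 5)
Your proposal is correct and follows essentially the same route as the paper: verify \ref{as:A2} as in Corollary \ref{cor:PolynomialRegularization}, check that $\widetilde z\mapsto \widetilde z\,h'(\widetilde z)=\tfrac{p^2}{4}\sign(\widetilde z)|\widetilde z|^{p-1}\in\mathcal F_{\wDomainRPOneD}$, invoke Theorem \ref{thm:Convergence}~\ref{thm:Convergence_v} for boundedness of $\wprod$, use \ref{thm:Convergence_iii} with $\partial\wDomainRP=\emptyset$ for the first bullet, and re-run the boundedness argument on $[0,\infty)$ before applying \ref{thm:Convergence_i} and \ref{thm:Convergence_iv} for the second. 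The only difference is that you are slightly more explicit than the paper in passing from boundedness of $\wprod$ to existence of $\lim_{t\to T}\w(t)$ via continuity of $\sigmainn^{-1}$, which is a welcome clarification.
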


\begin{proof}
    To apply Theorem \ref{thm:Convergence}, we only need to verify Assumption \ref{as:A2} for $\wDomainRP = \R^\N$. This has already been done in the proof of Corollary \ref{cor:PolynomialRegularization}. 
    Since we assume that $\w$ is regular we can apply Theorem \ref{thm:Convergence}.
    
    We first check that the function $\widetilde z \mapsto \widetilde z \cdot h'(\widetilde z)$ lies in the function class $\mathcal{F}_\wDomainRPOneD$, where
    \begin{align*}
        h'(\widetilde z) = \big( [\sigmainn^{-1}]'(\widetilde z) \big)^2 = \frac{p^2}{4} |\widetilde z|^{p-2}
    \end{align*}
    and $\mathcal{F}_\wDomainRPOneD$ was defined in \eqref{eq:F_D}. For $p \in (1,2)$, one has by continuity that  $\widetilde z \mapsto \widetilde z \cdot h'(\widetilde z) = \frac{p^2}{4} \sign(\widetilde z) |\widetilde z|^{p-1} \in L_1^c(\wDomainRPOneD\revFinal{\setminus \{0\}})$. Moreover, for $p \in (1,2)$ and any $\widetilde z \in \R$,
    \begin{align*}
        \int_{\widetilde z}^\infty \frac{p^2}{4} \sign(\tau) |\tau|^{p-1} d\tau = \infty
        \quad \text{and} \quad 
        \int_{\widetilde z}^{-\infty} \frac{p^2}{4} \sign(\tau) |\tau|^{p-1} d\tau = -\infty
    \end{align*} 
    such that $\widetilde z \mapsto \widetilde z \cdot h'(\widetilde z) \in \mathcal{F}_\wDomainRPOneD$.
    
    Theorem \ref{thm:Convergence} \ref{thm:Convergence_v} now yields that $\underset{t \in [0,T)}{\sup} \| \w(\t) \|_2  < + \infty$.
    Thus, the first claim holds by Theorem \ref{thm:Convergence} \ref{thm:Convergence_iii} since $ \partial \wDomainRP = \emptyset $.
    To show the second claim we first note that as before we can argue that the (possibly extended) function $ \w \colon [0,\infty) \to \R^N $ has the property that $\underset{t \in [0,\infty)}{\sup} \| \w(\t) \|_2  < + \infty  $. 
    Then the claim follows from Theorem \ref{thm:Convergence} \ref{thm:Convergence_iv}.
    This concludes the proof.
\end{proof}

After having derived Corollaries \ref{cor:PolynomialRegularization} and \ref{cor:PolynomialRegularizationConvergenceRate}, which induce Theorem \ref{thm:PolynomialRegularizationIntroduction}, let us now turn to the analogous results for trigonometric reparametrizations as considered in Theorem \ref{thm:TrigonometricRegularizationIntroduction}. In contrast to Corollary \ref{cor:PolynomialRegularization}, we set the initialization magnitude $\alpha$ directly to zero in order to simplify the statement.\footnote{In the polynomial case this was not possible \revFinal{since Theorem~\ref{theorem:vector_IRERM} applies only to trajectories whose limit is a global minimizer of the loss. In the polynomial case, initializing exactly at a saddle point (e.g., $\wprod_0 = \mathbf{0}$) yields a stationary trajectory that does not converge to a global minimizer.}}

\begin{corollary}[Trigonometric Regularization]
\label{cor:TrigonometricRegularization}
    Let $\w: [0,T) \to \mathbb{R}^N$ with $T \in \R_{>0} \cup \{+\infty\}$ be any solution to the differential equation \eqref{eq:gd_IRERM} in Definition \ref{definition:IRERM}, for $\A\in\mathbb{R}^{\M\times\N}$, $\y\in\mathbb{R}^{\M}$ with $\sigmaout \colon \R \to \R$ and $L \colon \R^\M \times \R^\M \to \R$
    satisfying \revFinal{Assumptions \ref{as:A1}, \ref{as:A3}, and \ref{as:B2}}.  Then, the following statements hold:
    
    \begin{enumerate}[label=(\roman*)]
        \item For $\sigmainn(\z) = \sinh(\z)$ and $\wprod_0 = \sigmainn(\w_0) = \0$, we have that if $\wprodT := \lim_{\t \to T} \sigmainn(\w(\t))$ with $\LossM(\wprodT) = 0$ exists, then
        \begin{equation}
             \g_\text{sinh} \left( \wprodT \right)
             =
              \min_{\widetilde\z\in\R^\N, \LossM(\widetilde\z) =0 } \g_\text{sinh} \left( \widetilde\z \right),
        \end{equation}
        where
        \begin{align}\label{eq:ImplicitRegularization_sinh}
            \g_\text{sinh} (\widetilde\z) = \langle \widetilde\z, \arctan(\widetilde\z) \rangle - \langle \bo, \tfrac{1}{2} \log(\bo +\widetilde\z^{\odot 2}) \rangle.
        \end{align}
        \label{cor:TrigonometricRegularizationNonAsymptotic_I}
        
        \item Assume in addition that \ref{as:B1} holds and that $\widetilde\z_0$ in \ref{as:B2} can be chosen such that $\widetilde\z_0 \in \R_{\neq 0}^\N$.\footnote{As already mentioned in Theorem \ref{thm:TrigonometricRegularizationIntroduction}, this property holds with probability one if $\A$ is generated at random with, e.g., iid Gaussian entries. \label{footnote:Special_z}} For $\sigmainn(\z) = \tanh(\z)$ and $\wprod_0 = \sigmainn(\w_0) = \0$, we have 
        that if $\wprodT := \lim_{\t \to T} \sigmainn(\w(\t))$ with $\LossM(\wprodT) = 0$ exists, then
        \begin{equation}
             \g_\text{tanh} \left( \wprodT \right)
             =
              \min_{\widetilde\z\in[-1,1]^\N, \LossM(\widetilde\z)=0} \g_\text{tanh} \left( \widetilde\z \right),
        \end{equation}
        where
        \begin{align}
            \g_\text{tanh} (\widetilde\z) = \langle \widetilde\z, \artanh(\widetilde\z) \rangle.
        \end{align}
        \label{cor:TrigonometricRegularizationNonAsymptotic_II}
    \end{enumerate}
\end{corollary}

\revFinal{\begin{remark}
The polynomial and trigonometric cases considered above are representative examples. More generally, similar conclusions can be derived for other choices of $\sigmainn$, provided that the associated function $h'$ satisfies the integrability conditions in Assumption~(A2). In this sense, the framework applies to a broader class of reparametrizations beyond the specific families considered here.
\end{remark}}

For the sake of conciseness, we defer the proof of Corollary \ref{cor:TrigonometricRegularization} to Section \ref{sec:Non-asymptoticBounds} and only provide a sketch of the argument here.

\begin{proof}[Proof sketch:]
    The proof idea of Corollary \ref{cor:TrigonometricRegularization} is very similar to the proof sketch of Corollary \ref{cor:PolynomialRegularization}.
    The main difference is that we may pick $\alpha = 0$ and have 
    \begin{align*}
        \langle \bo, \H(\widetilde\z) - \widetilde\z\odot\h(\wprod_0) \rangle = \g_\text{sinh}(\widetilde\z) - \arctan(\alpha) \langle \bo, \widetilde\z \rangle, 
    \end{align*}
    for $\sigmainn(\z) = \sinh(\z)$ with $\wDomainRP = \R^\N$, and
    \begin{align*}
        \langle \bo, \H(\widetilde\z) - \widetilde\z\odot\h(\wprod_0) \rangle =
        \frac{1}{2} \big( \g_\text{tanh}(\widetilde\z) - ( \artanh(\alpha) + \tfrac{\alpha}{1 - \alpha^2} ) \langle  \bo , \widetilde\z \rangle \big),
    \end{align*}
    for $\sigmainn(\z) = \tanh(\z)$ with  $\wDomainRP = [-1,1]^\N$, where $\H$ and $\h$ have been defined in Theorem \ref{theorem:vector_IRERM}. The claim follows by inserting $\alpha = 0$ into the equations above. 
\end{proof}

Just as in the polynomial case, we can use Theorem \ref{thm:Convergence} to analyze the convergence behaviour of $\wprod(\t)$ for $\sigmainn(\z) = \sinh(\z)$ and $\sigmainn(\z) = \tanh(\z)$.

\begin{corollary}[Trigonometric Regularization --- Convergence]
\label{cor:TrigonometricRegularizationConvergence}
    Let $\sigmainn(z) = \sinh(z)$ or $\sigmainn(z) = \tanh(z)$. Let $\w: [0,T) \to \mathbb{R}^N$ with $T \in \R_{>0} \cup \{+\infty\}$ be any solution to the differential equation \eqref{eq:gd_IRERM} in Definition \ref{definition:IRERM}, \revFinal{for $\A\in\mathbb{R}^{\M\times\N}$, $\y\in\mathbb{R}^{\M}$, $\sigmaout \colon \R \to \R$, and $L \colon \R^\M \times \R^\M \to \R$ satisfying Assumptions \ref{as:A1}, \ref{as:A3}, \ref{as:B2}, and \ref{as:B1}}. For $\sigmainn = \tanh$ furthermore assume that $\widetilde\z_0$ in \ref{as:B2} can be chosen such that $\widetilde\z_0 \in \R_{\neq 0}^\N$, \revFinal{cf.\ Footnote \ref{footnote:Special_z}}.
    Then the following statements hold:
    \begin{enumerate}[label=(\roman*)]
        \item The trajectory $\w$ can be extended to $\w \colon [0,\infty) \to \R^N$ and $\wprod(\t) = \sigmainn(\w(\t))$ converges to a limit $\wprodinfty \in \wDomainRP$ with $\LossM (\wprodinfty) = 0$ with the rate characterized in Theorem \ref{thm:Convergence} \ref{thm:Convergence_i}.
        Here, $\wDomainRP = \R^\N$ for $\sigmainn = \sinh$ and $\wDomainRP = (-1,1)^\N$ for $\sigmainn = \tanh$.
        \label{cor:TrigonometricRegularizationConvergenceRate_i}
        \item  If \revFinal{$\v \mapsto L(\sigmaout(\v),\y)$} satisfies the Polyak-Lojasiewicz inequality with $\mu > 0$, then there exists a constant $c > 0$, which is made explicit in the proof and only depends on $\mu$, $\widetilde\z_0$, $D_F(\widetilde\z_0,\wprod_0)$, and $\sigma_{\min}(\A)$, such that 
        \begin{align*}
            \LossM(\wprod(\t)) \le \LossM(\wprod_0) e^{-ct}.
        \end{align*}
        \label{cor:TrigonometricRegularizationConvergenceRate_ii}
    \end{enumerate}
\end{corollary}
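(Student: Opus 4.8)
The plan is to check the hypotheses of Theorem \ref{thm:Convergence} for the two reparametrizations $\sigmainn = \sinh$ and $\sigmainn = \tanh$ and then read off the conclusions. For part \ref{cor:TrigonometricRegularizationConvergenceRate_i}, the first step is to verify Assumption \ref{as:A2}. For $\sigmainn = \sinh$ we have $\wDomainRPOneD = \R$, $\sigmainn^{-1} = \artanh$... more precisely $\sigmainn^{-1}(\widetilde z) = \log(\widetilde z + \sqrt{1+\widetilde z^2})$, and $([\sigmainn^{-1}]'(\widetilde z))^2 = \frac{1}{1+\widetilde z^2}$, which is continuous on all of $\R$ and hence in $L_1^c(\R)$; moreover $\sigmainn'(x) = \cosh(x) \neq 0$ everywhere, so every flow is automatically regular and $\partial\wDomainRP = \emptyset$. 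For $\sigmainn = \tanh$ we have $\wDomainRPOneD = (-1,1)$, $\sigmainn^{-1} = \artanh$, and $([\sigmainn^{-1}]'(\widetilde z))^2 = \frac{1}{(1-\widetilde z^2)^2}$, which is continuous on $(-1,1)$ hence in $L_1^c((-1,1))$; again $\sigmainn'(x) = 1 - \tanh^2(x) \neq 0$, so regularity is automatic. Assumptions \ref{as:B1}, \ref{as:A3}, \ref{as:B2} are assumed directly (with the extra requirement $\widetilde\z_0 \in \R_{\neq 0}^\N$ in the $\tanh$ case ensuring that $\widetilde\z_0$ genuinely lies in the open domain $(-1,1)^\N$ once one notes that $\A\widetilde\z_0 = \y$ forces no coordinate constraint — here the point is that the target minimizer used in Theorem \ref{thm:Convergence} \ref{thm:Convergence_i} must be an \emph{interior} point so that $D_F(\widetilde\z_0,\wprod_0) < \infty$, which fails for $\tanh$ if a coordinate of $\widetilde\z_0$ equals $\pm 1$).

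The second step is to establish boundedness of the trajectory via Theorem \ref{thm:Convergence} \ref{thm:Convergence_v}, i.e.\ to check that $\widetilde z \mapsto \widetilde z \cdot h'(\widetilde z)$ lies in $\mathcal F_\wDomainRPOneD$. For $\sinh$, $h'(\widetilde z) = \frac{1}{1+\widetilde z^2}$, so $\widetilde z \cdot h'(\widetilde z) = \frac{\widetilde z}{1+\widetilde z^2}$; this is continuous (hence in $L_1^c(\R)$) and $\int_{\widetilde z}^{\pm\infty} \frac{\tau}{1+\tau^2}\,d\tau = \pm\infty$ (the integrand behaves like $1/\tau$), so it is in $\mathcal F_\R$ and Theorem \ref{thm:Convergence} \ref{thm:Convergence_v} gives $\sup_{t} \|\wprod(t)\|_2 < \infty$. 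For $\tanh$, $\wDomainRPOneD = (-1,1)$ is already bounded so boundedness of $\wprod(t) \in (-1,1)^\N$ is trivial and this step is unnecessary. In both cases boundedness plus Theorem \ref{thm:Convergence} \ref{thm:Convergence_iii} shows $T$ cannot be finite (for $\sinh$, $\partial\wDomainRP = \emptyset$; for $\tanh$, the limit at a finite $T$ would lie in $(-1,1)^\N = \wDomainRP$ since along a bounded-in-$\w$ trajectory $\wprod(t)$ stays in a compact subset of $(-1,1)^\N$), so $\w$ extends to $[0,\infty)$; then Theorem \ref{thm:Convergence} \ref{thm:Convergence_iv} yields existence of $\wprodinfty \in \overline{\wDomainRP}$ with $\LossM(\wprodinfty) = 0$, and the rate in \ref{thm:Convergence_i} transfers. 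One should also note that $\wprodinfty$ actually lies in $\wDomainRP$ (not just its closure): for $\sinh$ this is because $\wDomainRP = \R^\N$ is closed; for $\tanh$ one uses that $\wprod_\infty = \sigmainn(\winfty)$ with $\winfty = \lim \w(t) \in \R^\N$ finite (which follows since $\w$ stays bounded), forcing each coordinate of $\wprod_\infty$ into $(-1,1)$.

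For part \ref{cor:TrigonometricRegularizationConvergenceRate_ii}, the plan is to apply Theorem \ref{thm:Convergence} \ref{thm:Convergence_ii}, which requires a uniform lower bound $(\h')^{\odot -1}(\wprod(t)) \ge r$ for some $r > 0$. Since $(\h')^{-1}(\widetilde z) = 1 + \widetilde z^2 \ge 1$ for $\sinh$ and $(\h')^{-1}(\widetilde z) = (1-\widetilde z^2)^2$ for $\tanh$, we need the trajectory $\wprod(t)$ to stay inside a fixed compact subset of $\wDomainRP$; equivalently $\sup_t \|\wprod(t)\|_\infty < 1$ in the $\tanh$ case, and this uniform bound is exactly what must be extracted. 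The main obstacle is precisely this quantitative confinement: one must produce an explicit radius. The natural route is the Bregman-divergence energy identity underlying Theorem \ref{thm:Convergence} \ref{thm:Convergence_i} — namely $D_F(\widetilde\z_0, \wprod(t))$ is nonincreasing in $t$ — so $\wprod(t)$ stays in the Bregman ball $B_{D_F, D_F(\widetilde\z_0,\wprod_0)}(\widetilde\z_0)$; since for these $F$ the sublevel sets of $D_F(\widetilde\z_0, \cdot)$ are bounded away from $\partial\wDomainRP$ (for $\tanh$, $F(\widetilde z) = \H(\widetilde z)$ blows up like $-\log(1-\widetilde z^2)$ near $\pm 1$, making $D_F(\widetilde\z_0,\widetilde z) \to \infty$ as $\widetilde z \to \pm 1$), one gets $\|\wprod(t)\|_\infty \le 1 - \delta$ with $\delta = \delta(D_F(\widetilde\z_0,\wprod_0), \widetilde\z_0) > 0$, hence $r = \delta^2 \wedge \text{(const)}$; combined with $\mu$ and $\sigma_{\min}(\A)$ via Remark \ref{rem:PL}(ii) this yields the advertised constant $c = 2r\mu\sigma_{\min}^2(\A)$. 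For $\sinh$ the analogous argument gives a uniform bound $\|\wprod(t)\|_\infty \le R$ and then $r = 1$ suffices since $(\h')^{-1} \ge 1$ globally, making that case easier; the genuinely delicate bookkeeping is the $\tanh$ confinement estimate and tracking the dependence of $\delta$ on the initialization data.
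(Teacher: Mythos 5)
Your proposal has the right high-level architecture (verify \ref{as:A2} and regularity, use Theorem \ref{thm:Convergence} \ref{thm:Convergence_v}/\ref{thm:Convergence_iii}/\ref{thm:Convergence_iv} to extend the flow and obtain convergence, then \ref{thm:Convergence_ii} for the linear rate), and the $\sinh$ case is handled correctly and matches the paper. However, the $\tanh$ case of part \ref{cor:TrigonometricRegularizationConvergenceRate_i} contains a genuine gap. When you argue that $T$ cannot be finite, you write that ``the limit at a finite $T$ would lie in $(-1,1)^\N$ since along a bounded-in-$\w$ trajectory $\wprod(t)$ stays in a compact subset of $(-1,1)^\N$,'' and later that $\winfty \in \R^\N$ is finite ``since $\w$ stays bounded.'' But boundedness of $\w$ (equivalently, $\wprod$ staying strictly away from $\partial\wDomainRP = \partial(-1,1)^\N$) is precisely the nontrivial assertion that has to be established here; asserting it as justification is circular. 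Theorem \ref{thm:Convergence} \ref{thm:Convergence_iii} gives you that $\wprodT$ exists but only that \emph{either} $\wprodT \in \partial\wDomainRP$ \emph{or} the flow extends, and the whole difficulty in the $\tanh$ case is to rule out the first alternative. The Bregman confinement argument you sketch in part \ref{cor:TrigonometricRegularizationConvergenceRate_ii} (using monotonicity of $t \mapsto D_F(\widetilde\z_0,\wprod(t))$ from Lemma \ref{lem:decay} together with the blow-up of $\H$ near $\pm 1$) is exactly what is needed to close this gap, and it must be invoked already in part \ref{cor:TrigonometricRegularizationConvergenceRate_i}, not deferred to \ref{cor:TrigonometricRegularizationConvergenceRate_ii}. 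The paper makes this quantitative via the coordinatewise bounds \eqref{eq:w_lowerbound}--\eqref{eq:w_upperbound}, yielding $|\wprode_i(t)| \le \gamma < 1$ uniformly, which both rules out $\wprodT \in \partial\wDomainRP$ and supplies the radius $r = (1-\gamma^2)^2$ for part \ref{cor:TrigonometricRegularizationConvergenceRate_ii}.

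A secondary but substantive point: you misidentify the purpose of the hypothesis $\widetilde\z_0 \in \R_{\neq 0}^\N$. It is not needed to ensure $\widetilde\z_0$ lies in the open domain (Assumption \ref{as:B2} already places $\widetilde\z_0 \in \wDomainRP = (-1,1)^\N$, which excludes $\pm 1$ by construction). Rather, the nonvanishing of the coordinates of $\widetilde\z_0$ is used in the explicit confinement bound: the paper's estimates \eqref{eq:w_lowerbound}--\eqref{eq:w_upperbound} divide by $(\widetilde z_0)_i$ when extracting $\gamma_\pm$ from the bound $D_H((\widetilde z_0)_i, \wprode_i(t)) \le D_F(\widetilde\z_0,\wprod_0)$. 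Without this condition the argument as given would not produce a finite $\gamma < 1$.

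You should also justify (as the paper does) why extending to a maximal interval $[0,\tilde T)$ is well-defined: for both $\sinh$ and $\tanh$, $\sigmainn'$ is locally Lipschitz, so the right-hand side of \eqref{equ:gradflowdefinition} is locally Lipschitz in $\w$, making the solution unique. This is worth stating since Definition \ref{definition:IRERM} and Remark 1.2(ii) emphasize that uniqueness is \emph{not} guaranteed in general under only \ref{as:A1}--\ref{as:A3}.
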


\begin{proof}
    We already checked in the proof of Corollary \ref{cor:TrigonometricRegularization} that \ref{as:A2} holds.
    We furthermore verified that $\sigmainn' (x) \neq 0$ for all $x \in \R$ in both cases which implies that any flow is regular. All assumptions of Theorem \ref{thm:Convergence} are thus fulfilled. 
    Recall the definition of $h$ from Theorem \ref{theorem:vector_IRERM} and 
    let us split the cases:\\

    \noindent \textbf{Case $\sigmainn(z) = \sinh(z)$:} If $\sigmainn(z) = \sinh(z)$, we have that
    \begin{align*}
        h'(\widetilde z) = \big( [\sigmainn^{-1}]'(\widetilde z) \big)^2 = (1+\widetilde z^{2})^{-1},
    \end{align*}
    for $\widetilde z = \sigmainn(z)$, cf.\ proof of Corollary \ref{cor:TrigonometricRegularization}. We first check that the function $\widetilde z \mapsto \widetilde z \cdot h'(\widetilde z)$ lies in the function class $\mathcal{F}_\wDomainRPOneD$, where $\mathcal{F}_\wDomainRPOneD$ was defined in \eqref{eq:F_D}. By continuity $\widetilde z \mapsto \widetilde z \cdot h'(\widetilde z) \in L_1^c(\R^\N)$. Moreover, for any $\widetilde z \in \R$,
    \begin{align*}
        \Big| \int_{\widetilde z}^{\pm \infty} \frac{\tau}{1+\tau^2} d\tau \Big| = + \infty
    \end{align*}
    such that $\widetilde z \mapsto \widetilde z \cdot h'(\widetilde z) \in \mathcal{F}_\wDomainRPOneD$. 
    
        Now extend $\w$ to a maximal interval $[0,\tilde{T})$. 
        (Again, we slightly abuse notation and we do not relabel $\w$.)
        Note that this extension is well-defined since the solution of the gradient flow is unique which follows from the fact that $\sigmainn'$ is continuously differentiable and, thus, $\w'$ is a locally Lipschitz continuous function of $\w$, cf.\ the right-hand side of \eqref{equ:gradflowdefinition} below.
        Now we assume by contradiction that $\tilde{T} < + \infty$.
        In this case, Theorem \ref{thm:Convergence} \ref{thm:Convergence_v} yields
        that $\underset{t \in [0, \tilde{T} )}{\sup} \| \wprod(\t) \|_2  < + \infty$.
        By applying Theorem \ref{thm:Convergence} \ref{thm:Convergence_iii} we can extend $\w$ to a gradient flow solution $\w: [0,\tilde{T}+\varepsilon) \rightarrow \R$ for some $\varepsilon>0$.
        This is a contradiction to the assumption that $\tilde{T} < + \infty $ was chosen maximal.
        Thus, $\w$ can be extended to a function $\w: [0,+\infty) \rightarrow \R$.
        The remaining part of Claim \ref{cor:TrigonometricRegularizationConvergenceRate_i} follows directly from Theorem \ref{thm:Convergence} \ref{thm:Convergence_i} and \ref{thm:Convergence_iv}.

    To see why Claim \ref{cor:TrigonometricRegularizationConvergenceRate_ii} holds, note that $h'(\wprod(\t))^{\odot-1} = \bo+\wprod(\t)^{\odot 2} \ge \bo$.
    If we assume in addition that \revFinal{$\v \mapsto L(\sigmaout(\v),\y)$} satisfies the Polyak-Lojasiewicz inequality with $\mu > 0$, the claim follows from Theorem \ref{thm:Convergence} \ref{thm:Convergence_ii} with $c = 2\mu \sigma_{\min}(\A)^2$, \revFinal{cf.\ Remark \ref{rem:PL} (ii), which can be applied for $[\t_0,\t_1) = [0,T)$ \revFinal{since $T_\circ = \emptyset$}}.\\

    \noindent \textbf{Case $\sigmainn(z) = \tanh(z)$:} If $\sigmainn(z) = \tanh(z)$, one can compute that
    \begin{align*}
        h'(\widetilde z) = \big( [\sigmainn^{-1}]'(\widetilde z) \big)^2 = (1-\widetilde z^2)^{-2},
    \end{align*}
    for $\widetilde z = \sigmainn(z)$.    
    We can extend $\w$ to a maximal interval $[0,\tilde{T})$.
    (Note that this extension is unique and well-defined since $\tanh'$ is locally Lipschitz continuous, which implies that $\w'$ is a locally Lipschitz continuous function of $\w$, cf.\ the right-hand side of \eqref{equ:gradflowdefinition} below.)
    Again, we assume by contradiction that $\tilde{T}< + \infty$.
    Since $\wDomainRP = (-1,1)^\N$ is bounded, 
    Theorem \ref{thm:Convergence} \ref{thm:Convergence_iii} yields that we must have $\lim_{\t \to T} \wprod(\t) \in \partial \wDomainRP$ 
    since $\w$ cannot be extended to a larger interval $[0,T+\varepsilon)$ by assuming that $\tilde{T}$ is maximal. 
    However, this contradicts the fact that $|\wprode_i(t)| \le \gamma <1$, for any $i \in \N$ and $t \in [0,\tilde T)$, where $\gamma$ only depends on $\widetilde\z_0$ and $D_F(\widetilde\z_0, \wprod_0)$. Indeed, by Lemma \ref{lem:decay} and the shape of $\H(\widetilde z) = \frac{1}{2} \widetilde z \ \artanh (\widetilde z)$, which we computed in the proof of Corollary \ref{cor:TrigonometricRegularization}, we easily get that
    \begin{align*}
       D_F(\widetilde\z_0,\wprod_0)
       &\ge D_F(\widetilde\z_0,\wprod(t))
       \ge D_H((\widetilde z_0)_i, \wprode_i(t)) \\
       &= \frac{1}{2} \left( (\widetilde z_0)_i (\artanh((\widetilde z_0)_i) - \artanh(\wprode_i(t)) + \frac{(\widetilde z_0)_i \wprode_i(t)}{1-(\widetilde z_0)_i^2} - \frac{(\widetilde z_0)_i^2}{1-(\widetilde z_0)_i^2} \right),
    \end{align*}
    for any $i \in [\N]$. Together with the fact that $|\wprod(t)|\le 1$ and our additional assumption that $\widetilde \z_0 \in \R_{\neq 0}^\N$, this implies that
    \begin{align}
    \label{eq:w_lowerbound}
        \wprode_i(t) \ge \tanh \left( \artanh((\widetilde z_0)_i) - \frac{1}{(\widetilde z_0)_i} \left( 2 D_F(\widetilde\z_0,\wprod_0) + \frac{(\widetilde z_0)_i + (\widetilde z_0)_i^2}{1-(\widetilde z_0)_i^2} \right) \right)
        =: \gamma_-
        > -1.
    \end{align}
    Similarly, we get that
    \begin{align*}
        0 &\le D_F(\widetilde\z_0,\wprod(t))
        = D_H((\widetilde z_0)_i, \wprode_i(t)) + \sum_{j \neq i} D_H((\widetilde z_0)_j, \wprode_j(t)) \\
        &\le D_H((\widetilde z_0)_i, \wprode_i(t)) + D_F(\widetilde\z_0,\wprod_0),
    \end{align*}
    for any $i \in [\N]$, implying that
    \begin{align}
    \label{eq:w_upperbound}
        \wprode_i(t) \le \tanh \left( \artanh((\widetilde z_0)_i) + \frac{1}{(\widetilde z_0)_i} \left( 2 D_F(\widetilde\z_0,\wprod_0) + \frac{(\widetilde z_0)_i - (\widetilde z_0)_i^2}{1-(\widetilde z_0)_i^2} \right) \right)
        =: \gamma_+
        < 1
    \end{align}    
    and thus $|\wprode_i(t)| \le \gamma := \max\{|\gamma_-|,|\gamma_+|\} <1$, for any $i \in \N$ and $t \in [0,\tilde T)$.    
    Since this yields $\tilde{T}=\infty$ by contradiction,
    Theorem \ref{thm:Convergence} \ref{thm:Convergence_i} and \ref{thm:Convergence_iv} conclude the proof of Claim \ref{cor:TrigonometricRegularizationConvergenceRate_i}.

    To deduce Claim \ref{cor:TrigonometricRegularizationConvergenceRate_ii} from Theorem \ref{thm:Convergence}  \ref{thm:Convergence_ii}, note that $h'(\wprod(\t))^{\odot-1} = (\bo-\wprod(\t)^{\odot 2})^{\odot 2} \ge (1-\gamma^2)^2$ such that we can set $c = 2(1-\gamma^2)^2 \mu \sigma_{\min}(\A)^2$, \revFinal{cf.\ Remark \ref{rem:PL} (ii)}.
\end{proof}

\section{Proofs}
\label{sec:Proof}

In this section, we provide the proofs of our main results.

\subsection{Proof of Theorem \ref{theorem:vector_IRERM}}
\label{sec:Proofvector_IRERM}

The proof of Theorem \ref{theorem:vector_IRERM} is based on the following technical observation. 

\begin{lemma}\label{theorem:vector_general}
Let $\A\in\mathbb{R}^{\M\times\N}$, $\boldsymbol{\upsilon}\in\mathbb{R}^{\M}$, 
and let $\wDomainRPOneD \subset \R$ be an open, convex interval.
Suppose that $\wprod: [0,T) \rightarrow \wDomainRP$, for $T \in \R \cup \{+\infty\}$, is continuously differentiable on $(0,T)$ such that
\begin{enumerate}
    \item\label{assumption_converge} 
    the limit $\wprodT:=\lim_{\t\to T}\wprod(\t)  \in \R^N$ exists, %
    \item\label{assumption_linear} $\A^{T}\A \wprodT =\A^{T}\boldsymbol{\upsilon}$.
\end{enumerate}
Furthermore, assume that there exist continuous functions $\g: \wDomainRP \to \mathbb{R}^{\N}$ and $\h:\wDomainRPOneD \to \mathbb{R}$, \revFinal{and $T_\circ \subset [0,T)$ with $|T_\circ| < \infty$} such that the following properties hold:
\begin{enumerate}
    \setcounter{enumi}{2}
    \item\label{assumption_differentiable} $h \circ \wprod$ is continuously differentiable on any open interval in $[0,T) \setminus T_\circ$,
    \item\label{assumption_identity} for all $t \in [0,T) \setminus T_\circ$, 
    \begin{equation}\label{eq:ode_general}
        \h'(\wprod(\t)) \odot \wprod'(\t)
        = \g(\wprod(\t)),
    \end{equation}

    \item\label{assumption_derivative} \revFinal{$h'$ is well-defined almost everywhere with $\h' \in L_1^c(\wDomainRPOneD)$ and $h' > 0$ almost everywhere,}
    \item\label{assumption_kernel} $g\left( \widetilde\z \right) \in \ker (\A)^{\bot} $ for all $\widetilde\z \in \wDomainRP$.
\end{enumerate}
Let $\H: \wDomainRPOneD \to \R$ be an antiderivative of $h$, i.e., $\H' = \h$.
Then both $H$ and $h$ can be extended to continuous functions $\bar{H}: \overline{\wDomainRPOneD} \to \R \cup \left\{ +\infty \right\} $ and $ \bar{h}: \overline{\wDomainRPOneD} \to \R\cup \left\{ -\infty; +\infty \right\}$.
\footnote{On $ \R \cup \left\{ -\infty; +\infty \right\}$ we consider the closure with respect to the usual topology that is homeomorphic to $ [0,1] $.
By $ \overline{\wDomainRPOneD}$ we denote the (topological) closure of 
$\wDomainRPOneD$ in $\R$.
}
If any of the following two conditions is satisfied 
\begin{enumerate}
    \item[(i)] $|  \bar{h}  (x)| < \infty$ and $\bar{\H} (x) < \infty$ for all $x \in \overline{\wDomainRPOneD}$,
    \item[(ii)] $\wprodT \in \wDomainRP $,
\end{enumerate}
then 
\begin{equation}\label{eq:optimal_vector_general}
    \wprodT \in \argmin_{\zprod \in\overline{\wDomainRP}: \A^{T}\A \zprod=\A^{T}\boldsymbol{\upsilon}} \langle \bo, \bar{H} ( \zprod ) - \zprod \odot \bar{h} (\wprod(0)) \rangle.
\end{equation}
\end{lemma}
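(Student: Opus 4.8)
The plan is to identify \eqref{eq:optimal_vector_general} as the first-order optimality (KKT) condition for the convex problem on the right-hand side and to show that the trajectory $\wprod$ ``integrates'' this condition. The objective $G(\zprod) := \langle \bo, \bar H(\zprod) - \zprod \odot \bar h(\wprod(0))\rangle$ is separable and, by assumption \ref{assumption_derivative}, each summand has strictly increasing derivative $h - h(\wprod_i(0))$ on $\wDomainRPOneD$, hence $G$ is strictly convex there; the continuous (possibly $+\infty$-valued) extension to $\overline{\wDomainRP}$ remains convex and lower semicontinuous. The feasible set $\{\zprod : \A^T\A\zprod = \A^T\boldsymbol\upsilon\}$ is a non-empty affine subspace (it contains $\wprodT$ by assumption \ref{assumption_linear}), so a minimizer exists once we know $G$ is coercive along that subspace or, more cheaply, once we exhibit a feasible point at which the subgradient of $G$ is orthogonal to the subspace — which is exactly what the argument will produce. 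Concretely, a feasible $\zprod^\star$ minimizes $G$ over the affine set if and only if $\nabla G(\zprod^\star) = \bar h(\zprod^\star) - \bar h(\wprod_0) \in (\ker(\A^T\A))^\perp = (\ker \A)^\perp = \mathrm{row}(\A)$, using the standard convex-optimization fact that for a convex function on an affine set the optimum is characterized by the gradient being orthogonal to the subspace's direction (with the usual care about boundary points, handled by cases (i)/(ii)).

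So the heart of the proof is to show $\bar h(\wprodT) - \bar h(\wprod_0) \in \mathrm{row}(\A) = \ker(\A)^\perp$. First I would use assumption \ref{assumption_differentiable} together with \ref{assumption_identity}: on each open subinterval of $[0,T)\setminus T_\circ$ the map $t \mapsto h(\wprod(t))$ is $C^1$ with
\begin{equation*}
    \frac{d}{dt} h(\wprod(t)) = h'(\wprod(t)) \odot \wprod'(t) = g(\wprod(t)) \in \ker(\A)^\perp,
\end{equation*}
by \ref{assumption_kernel}. Since $T_\circ$ is finite and $h\circ\wprod$ is continuous on all of $[0,T)$ (as $h$ is continuous on $\wDomainRP$ and $\wprod$ is continuous), the function $t\mapsto h(\wprod(t))$ is continuous everywhere and $C^1$ off a finite set, hence equals the integral of its derivative; therefore
\begin{equation*}
    h(\wprod(t)) - h(\wprod_0) = \int_0^t g(\wprod(s))\, ds \in \ker(\A)^\perp
\end{equation*}
for every $t \in [0,T)$, because $\ker(\A)^\perp$ is a closed subspace and is thus preserved under (vector-valued Riemann) integration and under limits. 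Finally I would let $t \to T$: under case (ii), $\wprodT \in \wDomainRP$ so $h$ is continuous there and $h(\wprod(t)) \to h(\wprodT) = \bar h(\wprodT)$, giving $\bar h(\wprodT) - \bar h(\wprod_0) \in \ker(\A)^\perp$ directly; under case (i), $\bar h$ is finite-valued and continuous on $\overline{\wDomainRPOneD}$, so the same limit passage works even if $\wprodT \in \partial\wDomainRP$. Either way the KKT condition holds at $\zprod^\star = \wprodT$, and by strict convexity plus lower semicontinuity of $G$ (plus the fact that $G(\wprodT)<\infty$, so the infimum is finite) this point is the unique minimizer, establishing \eqref{eq:optimal_vector_general}.

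I would organize the write-up as: (1) record that $G$ is separable, strictly convex on $\wDomainRP$, extends l.s.c.\ to $\overline{\wDomainRP}$, and that $\wprodT$ is feasible with $G(\wprodT) < \infty$; (2) the integration step above to get $h(\wprod(t)) - h(\wprod_0)\in\ker(\A)^\perp$; (3) the limit $t\to T$ split into cases (i) and (ii); (4) conclude via the variational inequality: for any feasible $\zprod$, $\zprod - \wprodT \in \ker(\A^T\A) = \ker\A$, hence $\langle \bar h(\wprodT) - \bar h(\wprod_0), \zprod - \wprodT\rangle = 0$, and convexity of $G$ gives $G(\zprod) \ge G(\wprodT) + \langle \nabla G(\wprodT), \zprod - \wprodT\rangle = G(\wprodT)$ (with a standard approximation/monotone-convergence argument at boundary points where a component of $\bar h$ is infinite, since there $G$ is $+\infty$ in a neighborhood and such $\zprod$ cannot beat $\wprodT$ anyway).

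\textbf{Main obstacle.} The routine parts (separability, strict convexity, the ODE integration) are straightforward; the delicate point is the behavior at the boundary $\partial\wDomainRPOneD$, i.e., making the ``gradient orthogonal to the affine set $\Rightarrow$ global minimum'' argument rigorous when $\bar h$ or $\bar H$ can take the value $\pm\infty$ there. One must check that the continuous extensions $\bar h$, $\bar H$ are genuinely well-defined (monotonicity of $h$ from \ref{assumption_derivative} forces one-sided limits to exist in $\R\cup\{\pm\infty\}$, which is why the hypothesis is phrased as it is) and then argue separately that a candidate competitor $\zprod$ with some coordinate on the boundary either has $G(\zprod) = +\infty$ (case where $\bar H$ blows up) or is handled by a limiting subgradient inequality — this is precisely what conditions (i) and (ii) are designed to rule in or out, and getting the case analysis airtight is the part that needs care.
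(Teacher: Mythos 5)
Your proposal follows essentially the same strategy as the paper's proof: integrate the ODE \eqref{eq:ode_general} piecewise across the finite exceptional set $T_\circ$ to deduce that $\h(\wprod(t))-\h(\wprod_0) = \int_0^t \g(\wprod(s))\,ds \in \ker(\A)^\perp$ for all $t$, pass to the limit $t\to T$ using case (i) or (ii) to keep $\bar\h(\wprodT)$ finite, and then conclude via first-order optimality of the convex, separable objective over the affine slice of $\overline{\wDomainRP}$. The only differences are presentational: the paper phrases the optimality step through an explicit Lagrangian/KKT system (choosing the inequality multiplier $\bm\mu = \0$ so that the box constraints never bind) and, in case (ii), first restricts to an open subdomain $\wDomainCircOneD \subsetneq \wDomainRPOneD$ on which $\bar\h$ and $\bar\H$ are finite and then extends the conclusion by convexity of the objective, whereas you argue directly through the subgradient inequality $G(\zprod)\ge G(\wprodT)+\langle\nabla G(\wprodT),\zprod-\wprodT\rangle$ together with $\zprod-\wprodT\in\ker(\A^T\A)=\ker\A$, disposing of boundary competitors through the extended-real-valued objective — both routes are sound and lead to the same conclusion.
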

\begin{proof}
	We abbreviate $\wprod_0 = \wprod(0)$.
    First, note that $\H: \wDomainRPOneD \to \R$ is convex due to Assumption \ref{assumption_derivative}.
    This implies that it can be extended continuously to $\bar{H}: \overline{\wDomainRPOneD} \to \R \cup \left\{ +\infty \right\} $.
    Again using Assumption \ref{assumption_derivative} we observe that the function $\h: \wDomainRPOneD \to \R$ is monotonically increasing.
    This implies that it can be extended to a continuous function $ \bar{h}: \overline{\wDomainRPOneD} \to \R\cup \left\{ -\infty; +\infty \right\}$.

    The proof is based on validating the KKT-conditions of \eqref{eq:optimal_vector_general} for the limit 
    $\wprodT$.
    Let us first consider the case that (i) holds, i.e., that $|\h (x)| < \infty$ and $\H (x)< \infty$  for all $x \in \overline{\wDomainRPOneD}$.    
    \revFinal{Let $t' \in (0,T)$ be arbitrary.}
    Recalling Assumption \ref{assumption_identity} and writing \revFinal{$\T_\circ \cap [0,t'] = \{ t_1, \dots, t_{m-1} \}$} adding $t_0 = 0$ and $t_m := t'$, we have that 
    \begin{align}
        \int_{0}^{ t'} \g(\wprod(\t))d\t
        &= 
        \lim_{\varepsilon \searrow 0} \sum_{k = 1}^m \int_{t_{k-1}+\varepsilon}^{t_k-\varepsilon}
        \g(\wprod(\t))
        d\t
        \nonumber \\
        &= \lim_{\varepsilon \searrow 0} \sum_{k = 1}^m \int_{t_{k-1}+\varepsilon}^{t_k-\varepsilon}\wprod'(\t)\odot \h'(\wprod(\t)) d\t
        \nonumber \\
        &=
        \lim_{\varepsilon \searrow 0} \sum_{k = 1}^m 
        \left(
         \h(\wprod \left( t_k -\varepsilon \right)) - 
         \h( \wprod (  t_{k-1} +\varepsilon ) )
        \right)
        = \h(\wprod \left( t' \right)) - \h(\wprod_0),
           \label{eq:IntegrationOfG}
    \end{align}
    where we use that $h \circ \wprod$ is continuous on $[0,T)$ and continuously differentiable on open intervals in $[0,T) \setminus T_\circ$ by Assumption \ref{assumption_differentiable}.
    Since $\wprod(t) \in \wDomainRP$, for $t \in (0,T)$, we obtain by Assumption \ref{assumption_kernel} for any $t' \in [0,T)$ that
    \begin{equation*}
         \h(\wprod \left(t'\right)) - \h(\wprod_0)  \in \ker ( \A )^\bot.
    \end{equation*}
    Using Assumption \ref{assumption_converge} together with continuity of $h$, we can take the limit $t' \rightarrow T$ to obtain that 
    \begin{equation*}
         \h( \wprodT ) - \h(\wprod_0)  \in \ker ( \A )^\bot.
    \end{equation*}
    Together with Assumption \ref{assumption_linear}, we hence have that $\wprodT$ satisfies the following conditions
    \begin{align}
        \wprodT &\in \overline{\wDomainRP},\label{eq:xcondition_1}\\
        \0 &= \A^{T}\A \wprodT -\A^{T}\boldsymbol{\upsilon}, \label{eq:xcondition_2}\\
         \h(\wprodT) - \h(\wprod_0)  &\in \ker (\A)^\bot. %
        \label{eq:xcondition_3}
    \end{align}
    
    We now examine \eqref{eq:optimal_vector_general} and derive its KKT conditions.
    Since $\overline{\wDomainRP}$ is a convex polytope, there exist $\B \in \R^{\J\times \N}$, $\d \in \R^\J$, for $\J \in \mathbb N$, such that $\overline{\wDomainRP} = \{\zprod\in\mathbb{R}^{\N}: \B \zprod - \d \leq \0 \}$. 
    Consequently, the Lagrangian of \eqref{eq:optimal_vector_general} is given by
    \begin{align*}
        \mathfrak{L}(\zprod,\bm\lambda,\bm\mu)
        = \langle \bo, \H(\zprod) - \zprod\odot\h(\wprod_0) \rangle
        + \langle \bm{\lambda},\A^{T}\A\zprod - \A^{T}\boldsymbol{\upsilon} \rangle
        + \langle \bm\mu, \B\zprod - \d \rangle,
    \end{align*}
    where $\bm\lambda \in \R^{N}$ and $\bm\mu \in \R^\J$.
    The KKT conditions are given by
    \begin{align*}
        \0 &= \h(\zprod) - \h(\wprod_0) + \A^{T}\A \bm\lambda + \B^T \bm{\mu}
        && \text{(stationarity)}\\
        \0 &= \A^{T}\A\zprod - \A^{T}\boldsymbol{\upsilon} 
        &&\text{(primal feasibility)}\\
        \0 &\geq \B\zprod - \d \\
        \0 &\leq \bm{\mu}
        &&\text{(dual feasibility)}\\
        0 &= \langle \bm{\mu}, \B \zprod - \d \rangle
        &&\text{(complementary slackness)}.
    \end{align*}
    Since the constraints of the optimization problem in \eqref{eq:optimal_vector_general} are affine functions, the KKT-conditions are necessarily fulfilled by solutions of the problem. 
    Furthermore, since the objective in \eqref{eq:optimal_vector_general} is convex (due to Assumption \ref{assumption_derivative}), any $(\widetilde\z,\bm\lambda,\bm\mu)$ satisfying the KKT-conditions provides a solution $\widetilde\z$ of \eqref{eq:optimal_vector_general}, see, e.g., \cite[Theorem 28.3]{rockafellar1970convex}. 
    Hence, all that remains is to show that there exist $\bm\lambda$ and $\bm\mu$ such that $(\wprodinfty,\bm\lambda,\bm\mu)$ satisfies the KKT-conditions.
    
    By conditions \eqref{eq:xcondition_1} and \eqref{eq:xcondition_2}, we see that $\wprodinfty$ obeys the primal feasibility. 
    Moreover, by choosing $\bm\mu = \0$, we automatically satisfy dual feasibility and complementary slackness.
    Now note that, in this case, stationarity reduces to 
    \begin{align*}
        \0 &= \h( \wprodT ) - \h(\wprod_0) +\A^{T}\A \bm\lambda.
    \end{align*}
    Since $\h(\wprodinfty) - \h(\wprod_0) \in \ker \left( \A \right)^\bot = \mathrm{range}(\A^T)$ by \eqref{eq:xcondition_3} we can find a $\bm \lambda$ such that the above equation holds. Just note that $\A^T$ is invertible if restricted to $\mathrm{range}(\A)$.

    Let us now consider the case that (i) does not hold, which implies that $\wprodT \in \wDomainRP$ must hold by (ii).
    In this setting, we may choose
    an open and convex interval $\wDomainOneD_\circ \subsetneq \wDomainRPOneD$ 
    \revFinal{
    such that $ \wprod (t) \in \wDomainCirc $
    for all $t \in (0,T)$
    }
    and such that both $ \bar{h} \vert_{\overline{\wDomainCircOneD}} $ and $ \bar{H} \vert_{ \overline{\wDomainCircOneD}} $ take only values in $\R$.
    Then, Assumptions \ref{assumption_converge}-\ref{assumption_kernel} hold if  $ \wDomainRPOneD $ is replaced by $ \wDomainOneD_\circ $ and our previous KKT argument shows that
    \begin{equation}\label{intern:lastone}
        \wprodT \in \argmin_{\zprod \in\overline{\wDomainCirc}: \A^{T}\A \zprod=\A^{T} \boldsymbol{\upsilon}} 
        \underset{=:F(\zprod)}{\underbrace{\langle \bo, \bar{H} ( \zprod ) - \zprod \odot \bar{h} (\wprod(0)) \rangle}},
    \end{equation}
    such that the claim \eqref{eq:optimal_vector_general} follows from the convexity of the function $F $.
    Indeed, assume by contradiction that there were a $\zprod \in \wDomainRP \setminus \overline{\wDomainCircOneD} $
    such that $ F(\zprod) < F(\wprodT)$ and $\A^{T}\A \zprod=\A^{T} \boldsymbol{\upsilon}$.
    Then, by choosing $ \lambda \in (0,1)$ sufficiently small we had that 
    $(1-\lambda) \wprodT + \lambda \zprod \in \wDomainCirc $. 
    However, since
    \begin{equation*}
        F \left( (1-\lambda) \wprodT + \lambda \zprod \right)
        \le 
        (1-\lambda) F (\wprodT) + \lambda F(\zprod)
        < F (\wprodT)
    \end{equation*}
    and since $  \A^{T}\A \left( (1-\lambda) \wprodT + \lambda \zprod \right) =\A^{T} \boldsymbol{\upsilon}$,
    this would contradict \eqref{intern:lastone}.
    
\end{proof}

With Lemma \ref{theorem:vector_general} at hand, we can now prove Theorem \ref{theorem:vector_IRERM}.

\begin{proof}[Proof of Theorem \ref{theorem:vector_IRERM}]
By definition, the derivative of $\w$ on $(0,T)$ is
\begin{align}\label{equ:gradflowdefinition}
    \w'(\t)
    &= - \nabla\mathcal{L}(\w(t))
    = - \Big[ \A^T \nabla_\v L\big( \sigmaout(\v), \y \big) \big|_{\v = \A\sigmainn(\w(t))} \Big] \odot \sigmainn'(\w(t)),
\end{align}
where we used the chain rule in the second equality. Consequently,
\begin{align}    
    \wprod'(\t)
    &= \sigmainn'(\w(t)) \odot \w'(\t)
    = \sigmainn'(\w(t)) \odot [-\nabla\mathcal{L}(\w(t))]  \nonumber \\
    &= [\sigmainn'(\w(t))^{\odot 2}] \odot \Big[- \A^T \cdot \nabla_\v L\big( \sigmaout(\v), \y \big) \big|_{\v = \A\wprod(t)} \Big].  \label{eq:with_sigmaout_PreVersion}
\end{align}
By definition, we know that the trajectory $\wprod (\cdot)$ stays in $\wDomainRP$ such that we can write

\begin{align}
   \label{eq:with_sigmaout}
   \begin{split}
       \wprod'(\t)
       = 
       [\sigmainn'(\w (t))^{\odot 2}] 
       \odot \underbrace{\Big[- \A^T \cdot \nabla_\v L\big( \sigmaout(\v), \y \big) \big|_{\v = \A\wprod(t)} \Big]}_{=:\g(\wprod(t))},
   \end{split}
\end{align}
where $g \colon \wDomainRP \to \R^\N$ is continuous by Assumptions \ref{as:A1} and \ref{as:A3}. 
It is important to note that if $\sigmainn'( \w(t) )_i \neq 0$ for some $t \in (0,T)$ and some $i \in [N] $, then
\begin{align}
\label{eq:h_prime_explicit}
    [\h'(\wprod(t))]_i = \big[ [\sigmainn^{-1}]'(\wprod(t)) \big]_i^2 = \frac{1}{[\sigmainn'(\w(\t))]_i^2}
\end{align}
is well-defined, where $h'$ has been defined in \eqref{eq:hprimedefinitionWithoutPositivity}.  
By assuming $\w$ to be regular, we know that there are at most finitely many $\t \in (0,T)$ such that $[\sigmainn'(\w(\t))]_i = 0$ for some $i \in [\N]$.
Together with \eqref{eq:with_sigmaout} this implies that the equation 
\begin{align}
\label{eq:identity}
    h'(\wprod (t)) \odot \wprod' (t) = g(\wprod (t))
\end{align}
holds for all but finitely many $\t \in (0,T)$,
as required in Assumption \ref{assumption_identity} of Lemma \ref{theorem:vector_general}. Furthermore note that $g$ is continuous by Assumptions \ref{as:A1} and \ref{as:A3}, that $h$ is continuous by Assumption \ref{as:A2}, cf.\ Footnote \ref{footnote:h}, and that $\wprod$ is continuously differentiable on $(0,T)$ by \eqref{eq:with_sigmaout} and \ref{as:A1}-\ref{as:A3}. Denoting the set on which \eqref{eq:identity} holds by $[0,T) \setminus T_0$, we deduce from $g \circ \wprod$ being continuous and \eqref{eq:identity} that $h \circ \wprod$ is continuously differentiable on any open interval in $[0,T) \setminus T_0$, as required in Assumption \ref{assumption_differentiable} of Lemma \ref{theorem:vector_general}.

Recall that we assume $\wT:=\lim_{\t\to T}\w(\t)$ with $\mathcal{L}(\wT) = 0$ exists and that (i) $\wprodT := \sigmainn( \wT ) \in \wDomainRP$ or (ii) the continuous extensions of $h$ and $H$ to $\overline{\wDomainRPOneD}$ are finite valued.
Set $\boldsymbol{\upsilon} = \sigmaout^{-1}(\y)$, which is well-defined by Assumption \ref{as:A1}. We wish to apply Lemma \ref{theorem:vector_general} to $\wprod = \sigmainn(\w)$. We already verified that $\g: \wDomainRP \to \mathbb{R}^{\N}$ and $\h:\wDomainRPOneD \to \mathbb{R}$ are continuous functions, that $\wprod$ is continuously differentiable on $(0,T)$, and that Assumptions \ref{assumption_differentiable} and \ref{assumption_identity} hold. Let us now check the remaining Assumptions \ref{assumption_converge}-\ref{assumption_linear} and \ref{assumption_derivative}-\ref{assumption_kernel} of Lemma \ref{theorem:vector_general}.

Assumption \ref{assumption_converge} of Lemma \ref{theorem:vector_general} is clearly satisfied.
To validate Assumption \ref{assumption_linear}, recall \ref{as:A1} and \ref{as:A3} which yield that $\mathcal L (\w_T) = 0$ if and only if $\sigmaout(\A \wprodT) = \y$ which in turn is equivalent to $\A \wprodT = \boldsymbol{\upsilon}$ by injectivity of $\sigmaout$.
By \eqref{eq:hprimedefinitionWithoutPositivity} and Assumption \ref{as:A2}, the function $h'$ is well-defined in all but finitely many points and is strictly positive whenever it exists which yields Assumption \ref{assumption_derivative}.
Finally, Assumption \ref{assumption_kernel} holds 
since $\g(\wprod) \in \mathrm{range}(\A^T) = \ker(\A)^\perp$.

Applying Lemma \ref{theorem:vector_general} and using that $\{ \widetilde\z \in \R^ N \colon \A\widetilde\z = \boldsymbol{\upsilon} \} \subset \{ \widetilde\z \in \R^ N \colon \A^T\A\widetilde\z = \A^T\boldsymbol{\upsilon} \}$, we obtain
\begin{align*}
    \wprodT
    &\in \argmin_{\widetilde\z\in\overline{\wDomainRP}, \A\widetilde\z = \boldsymbol{\upsilon}
    } \langle \bo, \H(\widetilde\z) - \widetilde\z\odot\h(\wprod_0) \rangle\\
    &= \argmin_{\widetilde\z\in\overline{\wDomainRP},\,\LossM(\widetilde\z) = 0} \langle \bo, \H(\widetilde\z) - \widetilde\z\odot\h(\wprod_0) \rangle,
\end{align*} 
where we recalled from above that $\LossM(\widetilde\z) = 0$ if and only if $\A \widetilde\z = \boldsymbol{\upsilon}$. This yields \eqref{eq:implicit_regularization_Bregman}.
To conclude, we only need to subtract from the right-hand side the quantity $(\langle \bo, \H(\wprod_0) \rangle - \langle \wprod_0, \h(\wprod_0) \rangle)$, which is independent of $\widetilde\z$. For $F(\widetilde\z) = \langle \bo, \H(\widetilde\z)\rangle$, we then have
\begin{align}
\label{eq:MinEquivalence}
\begin{split}
    &\argmin_{\widetilde\z\in\overline{\wDomainRP},\,\LossM(\widetilde\z) = 0} \langle \bo, \H(\widetilde\z) - \widetilde\z\odot\h(\wprod_0) \rangle \\
    &= \argmin_{\widetilde\z\in\overline{\wDomainRP},\,\LossM(\widetilde\z) = 0} \langle \bo, \H(\widetilde\z) - \widetilde\z\odot\h(\wprod_0) \rangle -  \langle \bo, \H(\wprod_0) \rangle + \langle \wprod_0, \h(\wprod_0) \rangle\\
    &= \argmin_{\widetilde\z\in\overline{\wDomainRP},\,\LossM(\widetilde\z) = 0} \langle \bo, \H(\widetilde\z)-\H(\wprod_0) \rangle - \langle \h(\wprod_0), \widetilde\z - \wprod_0  \rangle\\
    &= \argmin_{\widetilde\z\in\overline{\wDomainRP},\,\LossM(\widetilde\z) = 0} D_{F}(\widetilde\z,\wprod_0).
\end{split}
\end{align}

\end{proof}

\subsection{Proof of Theorem \ref{thm:Convergence}}
\label{sec:ProofConvergence}

Let us begin by collecting some useful observations:
\begin{itemize}
    \item As argued in the proof of Theorem \ref{theorem:vector_IRERM}, $h' \circ \wprod$ is well-defined and continuously differentiable on any open intervall in $[0,T) \setminus T_\circ$, where $T_\circ \subset \R$ with $|T_\circ| < \infty$, by regularity of $\w$. In particular, 
    \begin{align}
    \label{eq:identity2}
        h'(\wprod (t)) \odot \wprod' (t) = g(\wprod (t))
    \end{align}
    holds for any $t \in [0,T) \setminus T_\circ$, \revFinal{with $g$ defined as in \eqref{eq:with_sigmaout}.}
    
    \item By Assumption \ref{as:B2}, there exists $\widetilde\z_0 \in \wDomainRP$ with $\revFinal{\sigmaout(\A\widetilde\z_0)} = \y$ such that Assumption \ref{as:A3} implies $\LossM(\widetilde\z_0) = 0$. 
    \item Let $D_F$ be the Bregman divergence associated with $F(\z) = \langle \bo, H(\z) \rangle$, where $H$ is an antiderivative of $h$.    
    (The Bregman divergence is invariant under affine transformation and, thus, it does not matter which antiderivatives we choose, see also Footnote \ref{footnote:H}).     
    Then, the function $\t \mapsto D_{F}(\widetilde\z_0,\wprod(\t))$ is decreasing and $D_{F}(\widetilde\z_0,\wprod(\t))$ converges for 
    \revFinal{$\t \to T$}. 
\end{itemize}

The last point is formalized in the lemma below which can be seen as a generalization of \cite[Lemma 8]{chou2021more}.

\begin{lemma}
\label{lem:decay}
    Let $\w: [0,T) \to \mathbb{R}^N$ with $T \in \R_{>0} \cup \{+\infty\}$ be any 
    regular
    solution to the differential equation  \eqref{eq:gd_IRERM} in Definition \ref{definition:IRERM}, for $\A\in\mathbb{R}^{\M\times\N}$, $\y\in\mathbb{R}^{\M}$, $\sigmainn \colon \R \to \R, \ \sigmaout \colon \R \to \R$, and $L \colon \R^\M \times \R^\M \to \R$. \revFinal{Assume that \ref{as:A1}, \ref{as:A2}, \ref{as:A3} and \ref{as:B1}} hold, and let $F(\z) = \langle \bo, H(\z) \rangle$, where $H$ is an antiderivative of $h$ as defined in Theorem \ref{theorem:vector_IRERM}. Then, for any $\widetilde\z \in \wDomainRP$  %
    and every $t \in (0,T) \setminus T_\circ$,
    \begin{align*}
         \frac{d}{dt} D_{F}(\widetilde\z,\wprod(\t)) 
        \le \LossM(\widetilde\z) - \LossM(\wprod(\t)).
    \end{align*}
    Here, $T_\circ$ is the finite set of times at which $h' \circ \wprod$ is not well-defined, cf.\ discussion around \eqref{eq:identity2}.
    In particular, if $T = \infty$ and $\LossM(\widetilde\z) = 0$,
    then $\lim_{t \rightarrow + \infty} D_\F(\widetilde\z,\wprod(\t))$ exists.
\end{lemma}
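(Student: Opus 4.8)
The plan is to differentiate $D_F(\widetilde\z, \wprod(t))$ in $t$ directly and exploit the gradient-flow identity \eqref{eq:identity2} together with convexity of $L$ in its first argument. Recall that $F(\z) = \langle \bo, H(\z) \rangle$, so $\nabla F(\z) = h(\z)$ (acting entry-wise), and hence
\begin{align*}
    D_F(\widetilde\z, \wprod(t)) = F(\widetilde\z) - F(\wprod(t)) - \langle h(\wprod(t)), \widetilde\z - \wprod(t) \rangle.
\end{align*}
For $t \in (0,T) \setminus T_\circ$, the map $h \circ \wprod$ is continuously differentiable (as established after \eqref{eq:without_sigmaout}), so the chain rule gives
\begin{align*}
    \frac{d}{dt} D_F(\widetilde\z, \wprod(t))
    &= -\langle h(\wprod(t)), \wprod'(t) \rangle - \langle \tfrac{d}{dt} h(\wprod(t)), \widetilde\z - \wprod(t) \rangle + \langle h(\wprod(t)), \wprod'(t) \rangle \\
    &= -\big\langle h'(\wprod(t)) \odot \wprod'(t), \widetilde\z - \wprod(t) \big\rangle
     = -\big\langle g(\wprod(t)), \widetilde\z - \wprod(t) \big\rangle,
\end{align*}
where the last equality is exactly \eqref{eq:identity2}. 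The two terms $\pm\langle h(\wprod(t)), \wprod'(t)\rangle$ cancel, which is the key simplification.

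The next step is to identify $-\langle g(\wprod(t)), \widetilde\z - \wprod(t)\rangle$ with a quantity controlled by $\LossM$. By definition in \eqref{eq:without_sigmaout}, $g(\wprod(t)) = -\A^T \nabla_\v L(\v,\y)\big|_{\v = \A\wprod(t)} = -\nabla \LossM(\wprod(t))$ (using $\sigmaout = \mathrm{Id}$ from \ref{as:B1}), so
\begin{align*}
    \frac{d}{dt} D_F(\widetilde\z, \wprod(t)) = \big\langle \nabla \LossM(\wprod(t)), \widetilde\z - \wprod(t) \big\rangle.
\end{align*}
Since $L(\cdot,\y)$ is convex by \ref{as:A3} and $\LossM(\widetilde\z) = L(\A\widetilde\z, \y)$ is a convex function of $\widetilde\z$ (composition of a convex function with a linear map), the first-order convexity inequality $\LossM(\widetilde\z) \ge \LossM(\wprod(t)) + \langle \nabla\LossM(\wprod(t)), \widetilde\z - \wprod(t)\rangle$ rearranges precisely to the claimed bound $\frac{d}{dt} D_F(\widetilde\z,\wprod(t)) \le \LossM(\widetilde\z) - \LossM(\wprod(t))$.

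For the final assertion: when $\LossM(\widetilde\z) = 0$, the bound gives $\frac{d}{dt} D_F(\widetilde\z, \wprod(t)) \le -\LossM(\wprod(t)) \le 0$ on $(0,T)\setminus T_\circ$, so $t \mapsto D_F(\widetilde\z, \wprod(t))$ is non-increasing on each of the finitely many subintervals of $(0,T)$ obtained by deleting $T_\circ$; since $D_F(\widetilde\z, \wprod(\cdot))$ is continuous on all of $(0,T)$ (as $\wprod$ and $h$ are continuous), it is non-increasing on the whole interval. Being also bounded below by $0$ (Bregman divergences are non-negative by convexity of $F$, which holds by Assumption \ref{as:A2} / \ref{assumption_derivative}), the limit as $t \to \infty$ exists when $T = \infty$. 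The main subtlety to handle carefully is the role of the exceptional set $T_\circ$: the differentiation identity only holds off $T_\circ$, so one must argue monotonicity piecewise and then glue using continuity of $D_F(\widetilde\z, \wprod(\cdot))$ — this is where regularity of $\w$ (finiteness of $T_\circ$) is essential. The rest is routine.
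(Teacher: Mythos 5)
Your proof is correct and takes essentially the same route as the paper: differentiate the Bregman divergence, cancel the two $\pm\langle h(\wprod(t)),\wprod'(t)\rangle$ terms, substitute via \eqref{eq:identity2}, and close with the first-order convexity inequality. The only cosmetic difference is that you phrase the final step as convexity of $\LossM$ directly (recognizing $g = -\nabla\LossM$), while the paper moves $\A^T$ across the inner product and invokes convexity of $L(\cdot,\y)$; these are the same estimate. Your explicit piecewise-monotonicity-plus-continuity gluing across $T_\circ$ at the end is a touch more careful than the paper's terse statement of the same fact.
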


\begin{proof}
    Let $\t \in (0,T) \setminus T_\circ$. As discussed below \eqref{eq:with_sigmaout}, the function $h' \circ \wprod$ is well-defined and continuously differentiable on a neighbourhood of $\t$, and \eqref{eq:identity2} holds.
	Applying now the chain rule, we compute the time derivative of $\t \mapsto D_{F}(\widetilde\z,\wprod(\t))$ as
	\begin{align}
		\frac{d}{dt} D_{F}(\widetilde\z,\wprod(\t))
		&= - \langle \nabla F(\wprod(\t)),\wprod'(\t) \rangle - \langle \partial_\t \nabla F(\wprod(\t)), \widetilde\z - \wprod(\t) \rangle + \langle \nabla F(\wprod(\t)),\wprod'(\t) \rangle \nonumber\\
		&= - \langle \h'(\wprod(\t)) \odot\wprod'(\t), \widetilde\z-\wprod(\t) \rangle
		= \big\langle \A^T \cdot \nabla_\v L\big( \revFinal{\sigmaout(\v)}, \y \big) \big|_{\v = \A\wprod(\t)}, \widetilde\z-\wprod(\t) \big\rangle  \nonumber\\
		&= \big\langle \nabla_\v L\big( \revFinal{\sigmaout(\v)}, \y \big) \big|_{\v = \A\wprod(\t)}, \A\widetilde\z - \A\wprod(\t)  \big\rangle \nonumber\\
		&\le L\big( \revFinal{\sigmaout(\A\widetilde\z)}, \y \big) - L\big( \revFinal{\sigmaout(\A\wprod(\t))}, \y \big) \nonumber \\
		&= \LossM(\widetilde\z) - \LossM(\wprod(\t)),\label{eq:decay}
	\end{align}
	where we used in the second line \eqref{eq:identity2}, and in the fourth line the convexity of \revFinal{$\v \mapsto L(\sigmaout(\v),\y)$ according to \ref{as:B1}}.
	We further know that $D_F(\widetilde \z,\wprod(0)) < + \infty$ due to $H$ being the anti-derivative of a continuous function. 
    For $T = \infty$ and $\LossM(\widetilde\z) = 0$, this immediately implies that $D_\F(\widetilde\z,\wprod(\t))$ converges for $\t \to + \infty$ since $\frac{d}{dt} D_{\F}(\widetilde\z,\wprod(\t))\leq 0$, for all $\t \in (0,T) \setminus T_\circ$, by \eqref{eq:decay} and Assumption \ref{as:A3}, and $D_\F(\widetilde\z,\wprod(\t)) \geq 0$ by strict convexity of $F$. The latter follows from $\nabla^2 F(\z) = \mathrm{diag}(h'(\z))$ and the fact that $h'(z)$ is strictly positive for all but finitely many $z \in \R$ by \eqref{eq:hprimedefinitionWithoutPositivity} and Assumption \ref{as:A2}.
\end{proof}

We can now proceed with proving Claims \ref{thm:Convergence_i}-\ref{thm:Convergence_v} of Theorem \ref{thm:Convergence}. Let us abbreviate $\wprod_0 = \wprod(0)$.\\  

\emph{Proof of \ref{thm:Convergence_i}:} 
We use Lemma \ref{lem:decay}
to show that \ref{thm:Convergence_i} holds. Let $T_\circ \subset \R$ be the finite set defined in Lemma \ref{lem:decay}. Recall that $D_{\F}$ is non-negative, that $\t \mapsto D_{F}(\widetilde\z_0,\wprod(\t))$ is continuous on $[0,T)$ and continuously differentiable on any open interval in $[0,T) \setminus T_\circ$, and that $D_F(\widetilde \z_0,\wprod_0) < \infty$ (as argued in the proof of Lemma \ref{lem:decay}). We thus get for any $t \in (0,T) \setminus T_\circ$ that
\begin{align*}
	D_\F(\widetilde \z_0, \wprod_0)
	&\geq D_{\F}(\widetilde \z_0,\wprod(0)) - D_{\F}(\widetilde \z_0,\wprod(t))
	= -\int_{0}^{t} \partial_{\t} D_{\F}(\widetilde \z_0,\wprod(\tau))d\tau\\
	&\ge \int_{0}^{t} \LossM(\wprod(\tau))d\tau
	\geq t \LossM(\wprod(t)),
\end{align*}
where the last inequality comes from the \revFinal{facts that $\LossM(\widetilde \z_0) = 0$ by assumption and that} $\t \mapsto \LossM(\wprod(\t))$ is non-increasing by the definition of $\w$ in \eqref{eq:gd_IRERM}. 
Rearranging the inequality yields the claim.
If $T = \infty$, we furthermore see that $\lim_{\t \to \infty} \LossM(\wprod(\t)) = 0$ since $D_F(\widetilde \z_0,\wprod_0) < + \infty$. 

\emph{Proof of \ref{thm:Convergence_ii}:} \revFinal{Recall that $\t_1,\t_2 \in [0,T]$ such that $[\t_1,\t_2] \cap T_\circ = \emptyset$.} The just observed sublinear convergence rate can be improved if we assume in addition that $(h')^{\odot -1}(\wprod(\t)) \ge r$, for some $r > 0$ and \revFinal{every $\t \in [\t_0,\t_1]$},\footnote{In contrast to $h' \circ \wprod$, which is only defined on $[0,T) \setminus T_\circ$, one can deduce from \eqref{eq:h_prime_explicit} and \ref{as:A2} that $(h')^{\odot -1} \circ \wprod$ is well-defined for all \revFinal{$\t \in [\t_0,\t_1]$}.} and that \revFinal{$\v \to L(\sigmaout(\v),\y)$ satisfies the Polyak-Lojasiewicz inequality with $\mu > 0$}. 
In this case, \revFinal{Equation \eqref{eq:identity2}} yields
\begin{align*}
	\partial_\t \LossM(\wprod(\t))
	&= \partial_\t \L(\revFinal{\sigmaout(\A\wprod(\t))},\y)
	= \Big\langle \nabla_{\wprod} L\big( \revFinal{\sigmaout(\A\wprod(\t))}, \y \big),\wprod'(\t) \Big\rangle \\
	&= -\Big\langle \nabla_{\wprod} L\big( \revFinal{\sigmaout(\A\wprod(\t))}, \y \big), h'(\wprod(\t))^{\odot -1} \odot \nabla_{\wprod} L\big( \revFinal{\sigmaout(\A\wprod(\t))}, \y \big) \Big\rangle \\
	&\le -r \Big\| \nabla_{\wprod} L\big( \revFinal{\sigmaout(\A\wprod(\t))}, \y \big) \Big\|_2^2 \\
	&\le - 2\revFinal{\mu} r \big( L(\revFinal{\sigmaout(\A\wprod(\t))},\y) - L(\revFinal{\sigmaout(\A\widetilde \z_0)},\y) \big) \\
	&= -2\revFinal{\mu} r \LossM(\wprod(\t)),
\end{align*}
for every $\t \in (0,T)$, where the penultimate step follows from the Polyak-Lojasiewicz inequality, %together with Remark~\ref{rem:PL} (ii), 
and the last equality uses that $\revFinal{\sigmaout(\A\widetilde\z_0)} = \y$. 
Applying Grönwall's inequality on the interval $[\t_0,\t) \subset [\t_0,\t_1]$, we obtain that
\begin{align*}
	\LossM(\wprod(\t)) \le \revFinal{\LossM(\wprod(\t_0)) e^{-2r\mu (\t-\t_0)}}
\end{align*}
and thus the claimed estimate.\\

\emph{Proof of \ref{thm:Convergence_iii}:} 
    Since we assume that $ \sup_{t\in [0,T)} \| \wprod(\t) \|_2 < + \infty$, 
    we know by \eqref{eq:with_sigmaout} 
    that\\
    $\sup_{t\in [0,T)} \| \wprod'(\t) \|_2 < + \infty$. 
    \revFinal{Indeed}
    note that \revFinal{by Assumptions \ref{as:A1}, \ref{as:A2}, and \ref{as:A3}}, the right-hand side of \eqref{eq:with_sigmaout} is a continuous function of $\w$.     
    Hence, 
    \begin{equation*}
    \wprod(T) := \lim_{t \to T} \wprod(t) = \lim_{t \to T} \left[ \wprod(0) + \int_{0}^{t} \wprod'(\s) ds \right]
    \end{equation*}
    exists.     
    We thus have one of the following two cases: (i) $\wprod(T) \in \partial \wDomainRP$, i.e., 
    \begin{equation*}
    \lim_{t \to T} \| \w(t) \|_2 = \lim_{t \to T} \| \sigmainn^{-1}(\wprod(t)) \|_2 = \infty
    \end{equation*}
    and the trajectory ends, or (ii) $\wprod(T) \in \wDomainRP$ such that $\lim_{t \to T} \w(t) = \sigmainn^{-1}(\wprod(T))$ and $\w$ can be extended by Peano's theorem and continuity of $\nabla\mathcal{L}$ in \eqref{eq:gd_IRERM}.\\

\emph{Proof of \ref{thm:Convergence_iv}:} Let us now assume that $T=\infty$. We show that if $ \sup_{t\ge 0} \| \wprod(\t) \|_2 < + \infty$,
then $\wprod(\t) \to \wprodinfty$ with $\LossM(\wprodinfty) = 0$. 
The argument is along the lines of \cite{chou2021more} and is repeated here in a simplified form for the reader's convenience.
Let us assume $\wprod$ is bounded, i.e., $\limsup_{\t \to \infty} \| \wprod(\t) \|_2 < + \infty$. By Assumption \ref{as:B2}, there exists a sufficiently large compact ball $B$ around the origin such that $\wprod(\t) \in B$, for all $\t \ge 0$, and $B \cap \{ \widetilde\z \in \R^\N \colon \A\widetilde\z = \y \} \neq \emptyset$.\\
Since $ \left\{ \wprod(\t): t \ge 0 \right\}  \subset B$ and $B$ is compact, there exists a sequence of times $\t_0 < \t_1 < \dots$ such that $\wprod(\t_k) \to \widetilde\z_\star$  as $k \rightarrow \infty$, for some $\widetilde\z_\star \in B$. 
By continuity of $D_F$, we thus get that $\lim_{k\to\infty} D_\F(\widetilde\z_\star,\wprod(\t_k)) = 0$. From \ref{thm:Convergence_i} we furthermore know that $\lim_{\t \to \infty} \A\wprod(\t) = \y$ which implies that $\A\widetilde\z_\star = \y$. Applying Lemma \ref{lem:decay} to $\widetilde\z_\star$ now yields that the function $\t \mapsto D_\F(\widetilde\z_\star,\wprod(\t))$ is non-increasing. Since $D_\F(\widetilde\z_\star,\wprod(\t)) \ge 0$, we obtain that $\lim_{\t\to\infty} D_\F(\widetilde\z_\star,\wprod(\t)) = 0$.\\
Let now $\t_0 < \t_1 < \dots$ be any sequence of times such that $\wprod(\t_k)$ converges to a limit $\widetilde\x$.
Continuity of $D_F$ implies that $D_\F(\widetilde\z_\star,\widetilde\x) = 0$ which yields $\widetilde\x = \widetilde\z_\star$ by strict convexity of $\F$.
Since $\widetilde\z_\star$ is the only accumulation point of $\left\{ \wprod(\t)\right\}_{t \ge 0} \subset B$ and $B$ is compact, we have $\wprod_\infty = \lim_{\t \to \infty} \wprod(\t) = \widetilde\z_\star$ and $\LossM(\wprodinfty) = 0$.\\

\emph{Proof of \ref{thm:Convergence_v}:}
Since $0\le D_F(\widetilde{\z}_0,\wprod_0) < \infty$ by \ref{thm:Convergence_i} and $\t \mapsto D_F(\widetilde{\z}_0,\wprod(\t))$ is non-increasing according to Lemma \ref{lem:decay}, the \revFinal{boundedness of $B_{D_F,r}(\widetilde \z_0)$ implies} that $$\underset{t \in [0,T)}{\sup} \| \wprod(\t) \|_2  < + \infty.$$ As it might be of independent interest, we prove the second claim of \ref{thm:Convergence_v} in the following lemma.
\begin{lemma}
	Let $F(\z) = \langle \bo, H(\z) \rangle$, where $H$ is an antiderivative of $h$ as defined in Theorem \ref{theorem:vector_IRERM}. Assume that Assumption \ref{as:A2} holds and that the univariate function $\widetilde z \mapsto \widetilde z \cdot h'(\widetilde z)$ is in the function class
	\begin{align}
		\label{eq:F_D_lemma}
		\mathcal F_\wDomainRPOneD = \Big\{ f \in L_1^c(\wDomainRPOneD) \colon \Big| \int_{z}^b f(x) dx \Big| = \infty, \text{ for any } z \in \wDomainRPOneD \text{ and } b \in \wDomainRPlim \Big\},
	\end{align}
	where $\wDomainRPlim := \{ \inf \wDomainRPOneD, \sup \wDomainRPOneD \} \cap \{-\infty, \infty\}$. Then, the Bregman ball
	\begin{align*}
		B_{D_F,r}(\x) = \{ \z \in \wDomainRP \colon D_F(\x,\z) \le r \} \subset \R^n
    \end{align*}
    is bounded in the Euclidean metric, for any $\x \in \wDomainRP$ and $r \ge 0$.
\end{lemma}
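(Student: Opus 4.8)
The plan is to reduce the multivariate claim to a one-dimensional statement about sublevel sets of $z \mapsto D_H(x,z)$ and then to read off the required divergence from the defining property of $\mathcal F_\wDomainRPOneD$. Since $F(\z) = \langle \bo, H(\z)\rangle = \sum_{i=1}^N H(\widetilde z_i)$ is separable, one checks directly from Definition \ref{def:Bregman_Divergence} that
\begin{equation*}
    D_F(\x, \z) = \sum_{i=1}^N \big( H(x_i) - H(z_i) - h(z_i)(x_i - z_i) \big) =: \sum_{i=1}^N D_H(x_i, z_i),
\end{equation*}
and every summand $D_H(x_i,z_i)$ is nonnegative because $H$ is convex ($H' = h$ is nondecreasing by \ref{as:A2}). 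Hence $B_{D_F,r}(\x) \subseteq \prod_{i=1}^N \{ z \in \wDomainRPOneD : D_H(x_i,z) \le r\}$, so it suffices to show that for each fixed $x \in \wDomainRPOneD$ and $r \ge 0$ the one-dimensional sublevel set $S(x,r) := \{ z \in \wDomainRPOneD : D_H(x,z) \le r \}$ is bounded.

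If $\wDomainRPOneD$ is bounded this is immediate, so assume $\sup \wDomainRPOneD = +\infty$; the case $\inf \wDomainRPOneD = -\infty$ is handled symmetrically. Since $h' \in L_1^c(\wDomainRPOneD)$, the functions $h$ and $H$ are locally absolutely continuous with $H' = h$ and $h' $ their a.e.\ derivatives, so $z \mapsto D_H(x,z)$ is locally absolutely continuous on $\wDomainRPOneD$ with $\frac{d}{dz} D_H(x,z) = -h(z) - h'(z)(x-z) + h(z) = h'(z)(z-x)$ a.e.; together with $D_H(x,x) = 0$ this gives the representation $D_H(x,z) = \int_x^z h'(\tau)(\tau - x)\, d\tau$ for all $z \in \wDomainRPOneD$. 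Now pick $M \in \wDomainRPOneD$ with $M > \max\{2|x|,0\}$ (such $M$ exists because $\wDomainRPOneD$ is an interval with $\sup\wDomainRPOneD = +\infty$); then $\tau - x \ge \tau/2 > 0$ for $\tau \ge M$, so for every $z > M$
\begin{equation*}
    D_H(x,z) = \int_x^M h'(\tau)(\tau - x)\, d\tau + \int_M^z h'(\tau)(\tau - x)\, d\tau \ge C_x + \tfrac12 \int_M^z \tau h'(\tau)\, d\tau,
\end{equation*}
with $C_x := \int_x^M h'(\tau)(\tau-x)\, d\tau$ finite by local integrability of $h'$. Since $\widetilde z \mapsto \widetilde z\, h'(\widetilde z) \in \mathcal F_\wDomainRPOneD$ and $+\infty \in \wDomainRPlim$, we have $\big|\int_M^{+\infty} \tau h'(\tau)\, d\tau\big| = \infty$, and as the integrand is nonnegative on $[M,+\infty)$ this forces $\int_M^z \tau h'(\tau)\, d\tau \to +\infty$ as $z \to +\infty$. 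Thus $D_H(x,z) \to +\infty$, so $S(x,r)$ is bounded above; the symmetric argument near $-\infty$ bounds it below. Consequently $B_{D_F,r}(\x)$ lies in a bounded box and is bounded in the Euclidean metric.

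The only genuinely delicate point I expect is justifying the integral representation of $D_H(x,\cdot)$ under the weak hypothesis $h' \in L_1^c(\wDomainRPOneD)$ alone (no continuity of $h'$ is assumed), i.e.\ establishing local absolute continuity so that the fundamental theorem of calculus and the computation of $\frac{d}{dz}D_H(x,z)$ are valid a.e.; everything else — separability of $F$, the elementary bound $\tau - x \ge \tau/2$ for large $\tau$, and extracting $\int_M^{+\infty}\tau h'(\tau)\,d\tau = +\infty$ from the sign of the integrand together with the $\mathcal F_\wDomainRPOneD$ condition — is routine bookkeeping, with the two infinite endpoints of $\wDomainRPOneD$ treated in the obvious symmetric fashion.
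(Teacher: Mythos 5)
Your proof is correct and follows essentially the same route as the paper's: reduce to one dimension via separability of $F$, differentiate $D_H(x,\cdot)$ to get the representation $D_H(x,z) = \int_x^z h'(\tau)(\tau-x)\,d\tau$, and extract divergence from the $\mathcal F_\wDomainRPOneD$ condition after establishing that $h'(\tau)(\tau-x)$ is eventually comparable to $\tau h'(\tau)$. The one cosmetic difference is in how you get that comparison: the paper invokes the limit $h'(\xi)(\xi-p)/(\xi h'(\xi)) \to 1$ to obtain a factor $\tfrac12$ for $\xi$ large enough, while you choose $M > 2|x|$ explicitly so that $\tau - x \ge \tau/2$ for $\tau \ge M$; your version is slightly more direct and avoids an unnecessary appeal to a limit. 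On the ``delicate point'' you flag: it is not a real obstruction. The function $h$ in Theorem \ref{theorem:vector_IRERM} is by construction a continuous antiderivative of the locally integrable $h'$, hence locally absolutely continuous, and $H$ is an antiderivative of the continuous $h$, hence $C^1$; so $z \mapsto D_H(x,z) = H(x) - H(z) - h(z)(x-z)$ is locally absolutely continuous with a.e.\ derivative $h'(z)(z-x)$, and the fundamental theorem of calculus applies as you use it. (The paper sidesteps this by restricting to points outside the finite exceptional set $R_\circ$ from Assumption \ref{as:A2}, which amounts to the same thing.)
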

\begin{proof}
    We assume that $\wDomainRPlim \neq \emptyset$ since otherwise $\wDomainRP$ is bounded and the claim is fulfilled.
	First note that, due to $F(\z) = \langle \bo, H(\z) \rangle$, we have $D_F(\x,\z) = \sum_{i=1}^{N} D_H(x_i,z_i)$. By \ref{as:A2}, $h'(\widetilde z)$ in \eqref{eq:hprimedefinitionWithoutPositivity} is well-defined and strictly positive \revFinal{almost everywhere}, which implies that $H$ is strictly convex.
	Hence, $D_H$ is a nonnegative function and %
	the set $B_{D_F,r}(\x) \subset \R^N$ is unbounded if and only if there exists an index $i\in [N]$ such that the set $B_{D_H,r}(x_i) = \{ z \in \wDomainRPOneD \colon D_H(x_i,z) \le r \} \subset \R$ is unbounded. \\
	For \revFinal{any $p \in \wDomainRPOneD$ and almost every $q \in \wDomainRPOneD$}, we have that 
    $$\tfrac{d}{dq} D_H(p,q) = \tfrac{d}{dq} [ H(p) - H(q) - h(q)(p-q) ] = h'(q) (q-p)$$
    \revFinal{is well-defined. Suppose, for the sake of contradiction, that there exists $p \in \wDomainRPOneD$ such that $B_{D_H,r}(p) = \{ z \in \wDomainRPOneD \colon D_H(p,z) \le r \} \subset \R$ is unbounded. Then, there exists a monotone and diverging sequence $(q_j)_{j\in \mathbb N}  \subset B_{D_H,r}(p)$. Since $B_{D_H,r}(p)$ is convex and thus an unbounded interval, we can replace $(q_j)_{j\in \mathbb N}$ by a (not relabeled) sequence for which $\tfrac{d}{dq} D_H(p,q_j)$ is well-defined, for every $j \in \mathbb N$.}
    
	We first consider the scenario $q_j \to + \infty \in \wDomainRPlim$.
	We have that
	\begin{align}  %
		D_H(p,q_j)
		&= D_H(p,q_0) + \int_{q_0}^{q_j} \left( \frac{d}{d\xi} D_H(p,\xi) \right) d\xi \nonumber \\
		&= D_H(p,q_0) + \int_{q_0}^{q_j} h'(\xi) (\xi - p) d\xi.\label{eq:IntegralIdentity}
	\end{align}
	Since \revFinal{$\frac{h'(\xi_k) (\xi_k - p)}{h'(\xi_k) \xi_k} \to 1$, for any sequence $\xi_k \to \infty$ for which $h'(\xi_k)$ is well-defined, for every $k \in \mathbb N$,} and the terms $h'(\xi) (\xi - p)$ and $h'(\xi) \xi$ are both positive for sufficiently large $\xi$, there exists $\xi_0$ such that $\frac{1}{2} h'(\xi) \xi \le h'(\xi) (\xi - p) \le 2 h'(\xi) \xi$, for \revFinal{almost every} $\xi \ge \xi_0$.
	Note that $\int_{q_0}^{\xi_0} h'(\xi) (\xi - p) d\xi$ is finite as $h' \in L_1^c(\wDomainRPOneD)$ by \ref{as:A2}.
	Since $\widetilde z \mapsto \widetilde z \cdot h'(\widetilde z)$ is in $\mathcal F_\wDomainRPOneD$ and $h' \in L_1^c(\wDomainRPOneD)$ is positive, we know by definition of $\mathcal F_\wDomainRPOneD$ that $\int_{\xi_0}^{\infty} h'(\xi) \xi d\xi$ is equal to $+\infty$. 
    Hence,
	\begin{align*}
		\int_{q_0}^{q_j} h'(\xi) (\xi - p) d\xi
		\ge 
        \int_{q_0}^{\xi_0} h'(\xi) (\xi - p) d\xi
		+ 
        \frac{1}{2} \int_{\xi_0}^{q_j} h'(\xi) \xi d\xi
		\to + \infty,
	\end{align*}
	for $j \to \infty$, which in turn implies that $D_H(p,q_j) \to +\infty$.
    For $q_j \to - \infty \in \wDomainRPlim$ an analogous argument shows that $D_H(p,q_j) \to +\infty$, for $j \to \infty$. \revFinal{Hence, $(q_j)_{j\in \mathbb N}  \not\subset B_{D_H,r}(p)$, a contradiction. We conclude that} $B_{D_H,r}(x_i)$ is bounded, for any $x_i \in \wDomainRPOneD$ and $r > 0$, and the claim follows. 
\end{proof}

\subsection{Proofs of Corollaries \ref{cor:PolynomialRegularization} and \ref{cor:TrigonometricRegularization}}
\label{sec:Non-asymptoticBounds}

In this section we 
present the proofs of Corollaries \ref{cor:PolynomialRegularization} and  \ref{cor:TrigonometricRegularization}.

\begin{proof}[Proof of Corollary \ref{cor:PolynomialRegularization}]
    We will apply Theorem \ref{theorem:vector_IRERM} with $\wDomainRP = \R^\N$. Since we already checked in the proof sketch of Corollary \ref{cor:PolynomialRegularization} that Assumption \ref{as:A2} holds and that $\wprod_{T,\alpha} \in \wDomainRP$, we know that  \eqref{eq:implicit_regularization_Bregman} holds, i.e.,
    \begin{align}
    \label{eq:implicit_regularizationII}
        \wprod_{T,\alpha} \in \argmin_{\widetilde\z\in\wDomainRP, \LossM(\widetilde\z) = 0} \langle \bo, \H(\widetilde\z) - \widetilde\z\odot\h(\wprod_0) \rangle.
    \end{align}
    where $h$ and $H$ have been defined in Theorem \ref{theorem:vector_IRERM} as antiderivatives of $h'(\widetilde z) = \big( [\sigmainn^{-1}]'(\widetilde z) \big)^2$ in \eqref{eq:hPrime}.
    
    Let us calculate the explicit form of \eqref{eq:implicit_regularizationII}. Since the functions $H$ and $h$ are acting entry-wise, we can restrict our derivation to single entries, i.e., $\widetilde\we_0 = \alpha>0$.
    Recalling $p > 1$ we compute that $\h(\widetilde z) = \frac{\p^2}{4(\p-1)} \sign(\widetilde{z}) |\widetilde z|^{\p - 1}$ and $\H(\widetilde z) = \frac{\p}{4(\p-1)} |\widetilde z|^{\p}$. Consequently, 
    \begin{align} \label{eq:polynomialF}
        \H(\widetilde z) - \widetilde z \h(\wprode_0) 
        = \frac{\p}{4(\p-1)}(|\widetilde z|^{\p} - \p\widetilde z \alpha^{\p - 1}).
    \end{align}
    Hence, $\langle \bo, \H(\widetilde\z) - \widetilde\z\odot\h(\wprod_0) \rangle = \frac{p}{4(p-1)} \langle \bo, |\widetilde \z|^{\odot p} - p \alpha^{p-1} \widetilde \z \rangle$ in this case.
    
    Now let $\widetilde\z_\star$ such that $ \Vert \widetilde\z_\star \Vert_p = \min\limits_{\widetilde\z \in\R^\N, \LossM(\widetilde\z) = 0} \|\widetilde \z \|_p $.
    By \eqref{eq:implicit_regularizationII}, we obtain for any $\widetilde\z \in \R^\N$ that
    \begin{equation*}
        \langle \bo, |\wprod_{T,\alpha}|^{\odot p} - p\alpha^{p-1}\wprodinfty  \rangle
        \leq \langle \bo, |\widetilde \z|^{\odot p} - p\alpha^{p-1} \widetilde \z \rangle.
    \end{equation*}
    Rearranging the terms we have for any $\widetilde\z \in \R^N$ with $\| \widetilde\z \|_p \le \|\wprod_{T,\alpha}\|_p$ that
    \begin{align*}
        \|\wprod_{T,\alpha}\|_p^p - \|\widetilde \z\|_p^p
        &\leq p\alpha^{p-1}(\|\wprodinfty\|_1 + \|\widetilde \z\|_1)
        \leq p\alpha^{p-1} N^{1-1/p} (\|\wprod_{T,\alpha}\|_p + \|\widetilde \z\|_p)\\
        &\le 2p\alpha^{p-1} N^{1-1/p} \|\wprod_{T,\alpha}\|_p .
    \end{align*}    
        At the same time, note that it follows from convexity of the univariate function $x \mapsto x^p$ that 
        \begin{align*}
        \|\wprod_{T,\alpha}\|_p^p - \|\widetilde \z\|_p^p
        &\ge
        p \|\widetilde \z\|_p^{p-1} \left(\|\wprod_{T,\alpha}\|_p - \|\widetilde \z\|_p \right).
        \end{align*}
    By combining the last two inequalities we obtain for any $\widetilde\z \in \R^N$ with $\| \widetilde\z \|_p \le \|\wprodinfty\|_p$ that
    \begin{align*}
         \|\wprod_{T,\alpha}\|_p - \|\widetilde \z\|_p 
        \le 
        \frac{2 \alpha^{p-1} }{ \|\widetilde \z\|_p^{p-1}} N^{1-1/p} \|\wprod_{T,\alpha}\|_p.
    \end{align*}
    Rearranging terms yields
    \begin{align*}
       \|\wprod_{T,\alpha}\|_p 
       \le \frac{1}{1- \frac{2 \alpha^{p-1} }{ \|\widetilde \z\|_p^{p-1}} N^{1-1/p}} \|\widetilde \z\|_p 
       \le \left(1+ \frac{4 \alpha^{p-1} }{ \|\widetilde \z\|_p^{p-1}} N^{1-1/p}  \right) \|\widetilde \z\|_p, 
    \end{align*}
    where in the second inequality we used the elementary estimate $ \frac{1}{1-x} \le 1+2x $ for $0<x \le 1/2$ and our assumption on $\alpha$.
    By setting $\widetilde\z=\widetilde\z_\star$ the claim follows.
    
\end{proof}

Finally, let us present the formal proof of Corollary \ref{cor:TrigonometricRegularization}.

\begin{proof}[Proof of Corollary \ref{cor:TrigonometricRegularization}]
    As in the proof of Corollary \ref{cor:PolynomialRegularization}, we will apply Theorem \ref{theorem:vector_IRERM}. Again, we only need to verify Assumption \ref{as:A2}. 
    If in addition $\wprodT \in \wDomainRP$, which trivially holds for $\sigmainn = \sinh$ but needs to additionally be verified for $\sigmainn = \tanh$, Theorem \ref{theorem:vector_IRERM} may be applied and yields that \eqref{eq:implicit_regularization_Bregman} holds, i.e.,
    \begin{align}
    \label{eq:implicit_regularizationIII}
        \wprodT \in \argmin_{\widetilde\z\in\overline{\wDomainRP}, \LossM(\widetilde\z) = 0} \langle \bo, \H(\widetilde\z) - \widetilde\z\odot\h(\wprod_0) \rangle, 
    \end{align}
    where $h$ and $H$ have been defined in Theorem \ref{theorem:vector_IRERM} as antiderivatives of
    \begin{align*}
        h'(\widetilde z) = \big( [\sigmainn^{-1}]'(\widetilde z) \big)^2.
    \end{align*}

    \emph{Proof of \ref{cor:TrigonometricRegularizationNonAsymptotic_I}:} We first consider $\sigmainn(\z) = \sinh(\z)$ with $\wDomainRP = \R^\N$. In this case, $\wprodT \in \wDomainRP$ is trivially satisfied by existence of the limit. Since the function $\sigmainn$ is defined entry-wise, both $H$ and $h$ will be defined entry-wise and we can restrict our derivation to single entries.
    For $\widetilde z = \sigmainn(\ze) = \sinh(\ze)$, %
    we have that
    \begin{align}
    \label{eq:h_prime_sinh}        
        \h'(\widetilde z) 
        = \big( [\sigmainn^{-1}]'(\widetilde z) \big)^2 
        = \frac{1}{ \sigmainn'( z)^{2} }
        = \frac{1}{\cosh(z)^{2}} = \frac{1}{1+\sinh(z)^{2}} = \frac{1}{1+\widetilde z^{2}}.        
    \end{align}
    Since $\sigmainn$ is continuously differentiable and invertible, the derivative $\sigmainn'$ never vanishes, and $h' \in L_1^c(\R^\N)$, Assumption \ref{as:A2} holds. 
    As $\sigmainn' (x) \neq 0$ for all $ x \in \R$, any solution of \eqref{eq:gd_IRERM} is regular and Theorem \ref{theorem:vector_IRERM}  can be applied 
    to obtain \eqref{eq:implicit_regularizationIII} with $\h(\widetilde z) = \arctan(\widetilde z)$, $\H(\widetilde z) = \widetilde z\arctan(\widetilde z) - \frac{1}{2}\log(1+\widetilde z^{2})$, and
    \begin{align*}
        \H(\widetilde\ze) - \widetilde\ze\h(\wprode_0)
        = \widetilde\ze(\arctan(\widetilde\ze) - \arctan (\wprode_0)) - \frac{1}{2}\log(1+\widetilde\ze^{2}).
    \end{align*}
    Observing that $\langle \bo, \H(\widetilde\z) - \widetilde\z\odot\h(\wprod_0) \rangle = \g_{\sinh}(\widetilde\z)$, for $\wprod_0 = \0$, concludes the proof of \ref{cor:TrigonometricRegularizationNonAsymptotic_I}.\\ 

    \emph{Proof of \ref{cor:TrigonometricRegularizationNonAsymptotic_II}:} We now consider $\sigmainn(\z) = \tanh(\z)$ with $\wDomainRP = (-1,1)^\N$, and assume in addition that $\sigmaout = \mathrm{Id}$.
    First note that, for $\widetilde z = \sigmainn(\ze) = \tanh(\ze)$,
    \begin{align*}        
        \h'(\widetilde z) = 
        \big( [\sigmainn^{-1}]'(\widetilde z) \big)^2 
        =\frac{1}{\sigmainn'(\ze)^{2}} = \frac{1}{(1-\tanh(\ze)^2)^2}
        = \frac{1}{(1-\widetilde z^2)^2}.        
    \end{align*} 
    By partial fraction decomposition, we obtain that
    \begin{align*}
        \h(\widetilde z)&= \frac{1}{4}\left(\log(1+\widetilde z) - \log(1-\widetilde z) + \frac{2\widetilde z}{1-\widetilde z^2}\right) \\
        &= \frac{1}{2}\left(\artanh(\widetilde z)  +\frac{\widetilde z}{1-\widetilde z^2}\right)
    \end{align*}
    and 
    \begin{align*}
        \H(\widetilde z)
        = \frac{1}{2} \widetilde z \ \artanh (\widetilde z).
    \end{align*}
    Consequently, 
    \begin{align}
    \label{eq:RandomEquation}
        \H(\widetilde\ze) - \widetilde\ze\h(\wprode_0)
        = \frac{\widetilde\ze}{2}\left(\artanh (\widetilde\ze) -\artanh (\wprode_0) - \frac{\wprode_0}{1-\wprode_0^2}\right),
    \end{align}
    which shows that $\langle \bo, \H(\widetilde\z) - \widetilde\z\odot\h(\wprod_0) \rangle = \frac{1}{2}  \g_{\tanh}(\widetilde\z)$ for $\wprod_0 = \0$. \\
    To apply Theorem \ref{theorem:vector_IRERM}, we have to verify that \ref{as:A2} holds, that $\w$ is regular, and that $\wprodT \in \wDomainRP$. Since $\sigmainn$ is continuously differentiable, invertible and the derivative $\sigmainn'$ never vanishes, and since $h' \in L_1^c([-1,1]^\N)$, we have that Assumption \ref{as:A2} holds.
    As $\sigmainn' (x) \neq 0$ for all $ x \in \R$, any solution of \eqref{eq:gd_IRERM} is regular. Finally, from Corollary \ref{cor:TrigonometricRegularizationConvergence} we know that $\wprodT \in \wDomainRP$ and Theorem \ref{theorem:vector_IRERM}  can be applied. This concludes the proof.
\end{proof}

\section*{Acknowledgements}

The authors want to thank Axel B\"ohm for helpful comments. H.C. acknowledges the funding from the Munich Center for Machine Learning (MCML). J.M. acknowledges partial support from the Munich Center for Machine Learning (MCML) and the Deutsche Forschungsgemeinschaft (DFG, German Research Foundation) – GRK 3081/1 – Project number 534429653.

\section*{Statements and Declarations}

The authors declare that they have no conflict of interest.

\bibliography{references}{}
\bibliographystyle{abbrv}

\end{document}